\documentclass[11pt]{article}
\usepackage{amsthm,amsfonts, amsbsy, amssymb,amsmath,graphicx}
\usepackage{graphics}
\usepackage[english]{babel}
\usepackage{graphicx}
\usepackage{lineno}
\usepackage{enumerate}
\usepackage{tikz}
\usepackage{tkz-graph}
\usepackage{tkz-berge}
\usepackage{hyperref}
\usepackage{color}
\usepackage{tikz}
\usetikzlibrary{arrows, automata}
 \setcounter{MaxMatrixCols}{20}
\hypersetup{colorlinks=true}

\hypersetup{colorlinks=true, linkcolor=blue, citecolor=blue,urlcolor=blue}
\title{$e$-injective coloring: injective and $2$-distance colorings conjectures}
\author{{\small  Shahrzad Sadat Mirdamad$^{a}$, Doost Ali Mojdeh$^{b}$\thanks{Corresponding author}}\\ {\small $^{a,b}$Department of
		Mathematics, Faculty of Mathematical Sciences}\\{\small University of Mazandaran, Babolsar, Iran}\\
	 {\small $^a$s.mirdamad03@umail.umz.ac.ir}\\ {\small $^b$damojdeh@umz.ac.ir}}
\date{}
\addtolength{\hoffset}{-1.7cm}
\addtolength{\textwidth}{3.5cm}%derecho
\addtolength{\voffset}{-1.7cm} \addtolength{\textheight}{2cm}

\newtheorem{theorem}{Theorem}[section]
\newtheorem{corollary}[theorem]{Corollary}
\newtheorem{proposition}[theorem]{Proposition}
\newtheorem{lemma}[theorem]{Lemma}

\newtheorem{observation}[theorem]{Observation}
\newtheorem{problem}[theorem]{Problem}

\theoremstyle{definition}
\newtheorem{definition}[theorem]{Definition}
\theoremstyle{remark}

\begin{document}
	
	\maketitle
\begin{abstract}
An injective coloring of a given graph $G=(V,E)$ is a vertex coloring of $G$ such that any two vertices with common neighbor receive distinct colors.
An $e$-injective coloring of a graph $G$ is a vertex coloring of $G$ such that any two vertices with common edge neighbor receive distinct colors; in the other words, if $u$ and $v$ are the end of a path $P_4$ in a graph $G$, then they are assigned with different labels.
With this new definition, we want to take a review at injective coloring of a graph from the new point of view.
For this purpose, we will have a comparison between $e$-injective coloring with usual coloring, injective coloring, and $2$-distance coloring.
As well, we review  the conjectures raised so far in the literature of injective coloring and $2$-distance coloring, from the new approach, $e$-injective coloring.  Finally, we precisely investigate  the $e$-injective coloring of trees, join of two graphs, a family of standard graphs, grid graphs, cylinder graphs and tori graphs.
\end{abstract}
{\bf Keywords:} Graph coloring, $e$-injective coloring, injective coloring, $2$-distance coloring, conjectures, graph product.\vspace{1mm}\\
{\bf 2020 Mathematics Subject Classification:} 05C15.
\section{Introduction}
Graph coloring has many applications in various fields of life, such as timetabling, scheduling daily life activities, scheduling computer processes, registering allocations to different institutions and libraries, manufacturing tools, printed circuit testing, routing and wavelength, bag rationalization for a food manufacturer, satellite range scheduling, and frequency assignment. These are some applications out of the many that already exist and many to come.

A proper $k$-coloring (here after, $k$-coloring)  of a graph $G$ is a function $f:V(G)\rightarrow \lbrace 1,2,3,\dots,k \rbrace$
such that for all edge $xy\in E$, $f(x)\neq f(y)$. The chromatic number of $G$, denoted by $\chi(G)$, is the minimum integer $k$ such that $G$ has a $k$-coloring.

In $1969$, Kramer and Kramer \cite{Kramer1} introduced the notion of $2$-{\it distance coloring} of a graph $G$ or the usual proper coloring of $G^2$\ (we can see also their survey on general distance coloring in \cite{Kramer2}) and we see some of its applications in \cite{Halldórsson, La-val}.
A $2$-{\it distance $k$-coloring} of a graph $G$ is a function $f:V\rightarrow \lbrace 1,2,3,\dots,k \rbrace$, such that no pair of vertices at distance at most $2$, receive the same color, in the other words, the colors of the vertices of all $P_3$ paths in the graph are distinct.
The $2$-distance chromatic number of $G$, denoted by $\chi_2(G)=\chi(G^2)$, is the minimum positive integer $k$ such that $G$ has a $P_{3}$-$k$-coloring.
The $2$-distance coloring of $G$, is a proper coloring.

For a graph $G$, the subset $S$ of $V(G)$ is said to  be a dominating set if any vertex $x\in V\setminus S$ is adjacent to a vertex $y$ in $S$. A dominating set of $G$ with minimum cardinality is called the domination number of $G$ and is denoted by $\gamma(G)$. A subset $D$ of $V(G)$ is said to be $2$-distance dominating set if any vertex $d\in V\setminus D$, is in at most $2$-distance of to a vertex in $D$. A $2$-distance dominating set of $G$ with minimum cardinality is called the $2$-distance domination number of $G$ and is denoted by $\gamma_2(G)$.

The injective coloring was first introduced in $2002$ by Hahn et al. \cite{Han-Kratochvil} and it was also further studied in \cite{Bu, Cranston, Luzar, Panda, Song}. In \cite{Bresar}, Brešar et al.  studied the concept of injective coloring from two novel perspectives (related to open packing and the two-step graph operation) and with this view, there were proved several general bounds on the injective chromatic number. An injective $k$-coloring of a graph $G$ is a function $f:V\rightarrow \lbrace 1,2,3,\dots,k \rbrace$ such that no vertex $v$ is adjacent to two vertices $u$ and $w$ with $f(u)=f(w)$, in the other words, for any path $P_3=xyz$, we have  $f(x)\neq f(z)$.
The injective chromatic number of $G$, denoted by $\chi_i(G)$, is the minimum positive integer $k$ such that $G$ has an injective $k$-coloring. The injective chromatic number of the hypercube has important applications in the theory of error-correcting codes.
As it is well known, the injective coloring of $G$, is not necessarily proper coloring. Injective coloring of a graph $G$ is related to the usual coloring of the square $G^2$. The inequality $\chi_i(G)\le  \chi_2(G)$ obviously
holds.

There are several results related to injective coloring that review the usual coloring results in graph theory from a new point of view, in particular on the  injective chromatic number of planar graphs.
  As well, many conjectures on planar graphs have
been posed and studied by authors so far. In this regard, we  bring up some of them as follows.\\

From the relation between the injective coloring of a graph $G$ and the usual coloring of the square $G^2$, Wegner \cite{Wegner} posed the following conjecture.\\

\noindent\textbf{Conjecture 1.} \cite{Wegner} Let $G$ be a planar graph with maximum degree $\Delta$. The chromatic number of $G^2$ is at most

1. $7$, if $\Delta=3$,

2. $\Delta+5$, if $4 \le \Delta \le 7$,

3. $\lfloor\frac{3}{2} \Delta\rfloor + 1$, otherwise.\\

Lu\v{z}ar, R. \v{S}krekovski in \cite{Luzar2} showed  that,

\noindent\textbf{Theorem A.} (\cite{Luzar2} Theorem 2.1) There exist planar graphs $G$ of maximum degree $\Delta \ge 3$ satisfying the
following\\

1. $\chi_i(G)=5$, if $\Delta=3$,

2. $\chi_i(G)=\Delta+5$, if $4 \le \Delta \le 7$,

3. $\chi_i(G)=\lfloor\frac{3}{2} \Delta\rfloor + 1$, if $\Delta \ge 8$.\\

Adapted from Theorem A, they proposed the following Wegner type conjecture for the injective chromatic number of planar graphs.\\

\noindent\textbf{Conjecture 2.} \cite{Luzar2} Let $G$ be a planar graph with maximum degree $\Delta$. The injective chromatic number $\chi_i(G)$ of $G$ is at most

(i) $5$, if $\Delta=3$,

(ii) $\Delta+5$, if $4 \le \Delta \le 7$,

(iii) $\lfloor\frac{3}{2} \Delta\rfloor + 1$, otherwise.\\

By the relation between injective chromatic number and  $2$-distance chromatic number of a graph; showing the truth of  Conjecture 1 (parts (2), (3)), will deduce  the truth of Conjecture 2 (parts (ii), (iii)).

\indent Now we introduce a new concept of vertex coloring (near to injective coloring) as an $e$-injective coloring of a graph. The motivation of the alleging is to study, how it behaves against of the injective graph coloring, usual graph coloring, $2$-distance graph coloring,  packing set and $2$-distance dominating set of graphs.
As well, in particular we investigate the posed conjectures from the point of view of $e$-injective colorings.
Also since the notion of $e$-injective  coloring is near to injective coloring, one can predict, it has applications in various fields of life in real world and would also be useful in coding theory as so did injective coloring.

This concept is introduced as follows.

\begin{definition}
Let $G$ be a graph.	A function $f:V(G)\rightarrow \lbrace 1,2,3,\dots,k \rbrace$ is an $e$-injective $k$-coloring function if any two vertices with common edge neighbor receive distinct colors under $f$; in the other words, if $u$ and $v$ are the end of path $P_4=uxyv$ in  $G$, then $f(u)\ne f(v)$.

The  $e$-{\it injective chromatic number} of $G$, denoted by $\chi_{ei}(G)$, is the minimum positive integer $k$ such that $G$ has an $e$-injective $k$-coloring.

The $e$-injective coloring of $G$, is not necessarily proper coloring.
\end{definition}

This concept can be expressed as new discourse.

\begin{definition}
	For a given graph $G$, {\it the three-step graph} $S_{3}(G)=G^{(3)}$ of a graph $G$ is the graph having the same vertex set as $G$ with an edge joining two vertices in $S_{3}(G)$ if and only if  there is a path of length $3$ between them in $G$.\\
Taking into account, the fact that a vertex subset $S$ is independent in $S_{3}(G)$ if and only if there is no path of length $3$ between any two vertices corresponding of $S$ in $G$,  we can readily observe that:
$$\chi_{ei}(G)=\chi(S_{3}(G))$$
\end{definition}

From the point of view of $e$-injective coloring,  the type of  conjectures 1 and 2 can be declared as a problem, which can be argued in next section.\\

\begin{problem}\label{prob-conj}
Let $G$ be a planar graph with maximum degree $\Delta$. Then $\chi_{ei}(G)$ is at most

\emph{1}. $5$, if $\Delta=3$,

\emph{2}. $\Delta+5$, if $4 \le \Delta \le 7$,

\emph{3}. $\lfloor\frac{3}{2} \Delta\rfloor + 1$, otherwise.\\
\end{problem}

In the sequel of this section, we introduce some notations. We use \cite{chartrand, west} as a reference for terminology and notation which are not explicitly defined here. Throughout the paper, we consider $G=(V,E)$ be a finite simple graph with vertex set $V=V(G)$ and edge set $E=E(G)$.
The open neighborhood of a vertex $v\in V$ is the set $N(v)=\lbrace u \lvert uv \in E \rbrace$, and its closed neighborhood is the set $N[v]=N(v) \cup \lbrace v \rbrace$.
The cardinality of $\lvert N(v) \rvert$ is called the degree of $v$, denoted by $\deg(v)$.
The minimum degree of $G$ is denoted by $\delta (G)$ and the maximum degree by $\Delta (G)$.
A vertex $v$ of degree $1$ is called a pendant vertex or a leaf, and its neighbor is called a support vertex.
A vertex of degree $n-1$ is called a full or universal vertex while a vertex of degree $0$ is called an isolated vertex.
For any two vertices $u$ and $v$ of $G$, we denote by $d_G(u,v)$ the distance between $u$ and $v$, that is the length of a shortest path joining $u$ and $v$.
The path, cycle and complete  graph with $n$ vertices are denoted by $P_{n}$, $C_{n}$ and $K_{n}$ respectively.
The complete bipartite graph with $n$ and $m$ vertices in their partite sets is denoted by $K_{n,m}$, while the wheel graph with $n+1$ vertices is denoted by $W_{n}$.
A star graph with $n+1$ vertices, denoted by $S_{n}$, consists of $n$ leaves and one support vertex.
A double star graph is a graph consisting of the union of two star graphs $S_{m}$ and $S_{n}$,  with one edge joining their support vertices; the double star graph with $m+n+2$ vertices is denoted by $S_{m,n}$.

The join of two graphs {$G$} and {$H$}, denoted by $G\vee H$, is the graph obtained from the disjoint union of $G$ and $H$ with vertex set $V(G\vee H)
=V(G)\cup V(H)$ and edge set $E(G\vee H)= E(G)\cup E(H)\cup \{xy\ |\ x\in V(G),\ y\in V(H)\}$.
A fan graph is a simple graph consisting of joining $\overline{K}_{m}$ and $P_{n}$; the fan graph with $m+n$ vertices is denoted by $F_{m,n}$. For two sets of vertices $X$ and $Y$, the set $[X,Y]$ denotes the set of edges $e=uv$ such that $u\in X$ and $v\in Y$.

The square graph $G^{2}$ is a graph with the same vertex set as $G$ and with its edge set given by $E(G^{k})=\{uv| \ dist(u,v)\leq 2\}$. The chromatic number $\chi(G^2)$ of $G^{2}$ (or $2$-distance chromatic number $\chi_2(G)$ of $G$) has been studied extensively in planar graph \cite{Heuvel, Molloy}. One of the important applications of the chromatic number of $G^{2}$ is in the field of Steganography, which is the study of hiding messages within an ordinary and non-secret message to avoid detection \cite{Fridrich}.

A subset $B\subseteq V(G)$ is a packing set in $G$ if for every pair of distinct vertices $u, v\in B$, $N_{G}[u]\bigcap N_{G}[v]=\emptyset$.
The packing number $\rho (G)$ is the maximum cardinality of a packing set in $G$.\\
A subset $B\subseteq V(G)$ is an open packing set in $G$ if for every pair of distinct vertices $u, v\in B$, $N_{G}(u)\bigcap N_{G}(v)=\emptyset$.
The open packing number $\rho^{\circ}(G)$ is the maximum cardinality  among all open packing sets in $G$.

Let $G$ and $H$ be simple graphs. For three standard products of graphs $G$ and $H$, the vertex set of the product is $V(G)\times V(H)$ and their edge set is defined as follows.\\
$\bullet$ In the {\it Cartesian product} $G\square H$, two vertices are adjacent if they are adjacent in one coordinate and equal in the other.\\
$\bullet$ In the {\it direct product} $G\times H$, two vertices are adjacent if they are adjacent in both coordinates.\\
$\bullet$ The edge set of the {\it strong product} $G\boxtimes H$, is the union of $E(G\square H)$ and $E(G\times H)$.\\
A {\it grid graph} is a graph obtain from the Cartesian product of paths $P_m$ and $P_n$.
A {\it cylinder graph} is a graph obtain from the Cartesian product of path $P_m$ and cycle $C_n$. A {\it tori graph} is a graph obtain from the Cartesian product of $C_m$ and $C_n$. 

In the end of this section, we explore the purpose of the paper as follows.
In Section 2,  we study $\chi_{ei}(G)$ versus to the $\chi(G)$, $\chi_{2}(G)$ and $\chi_{i}(G)$, as well, we review  the conjectures raised so far in the literature of injective  and $2$-distance colorings, from the new approach, $e$-injective coloring, and by disproving the Problem \ref{prob-conj}, we show that the Conjectures 1 and 2 may be wrong.
We prove that, for any tree $T$ ($T\neq S_n$),
$\chi_{ei}(T)=\chi(T)$, for disjoint graphs $G, H$, with non-empty edge sets, $\chi_{ei}(G\cup H)=max\{\chi_{ei}(G), \chi_{ei}(H)\}$ and $\chi_{ei}(G\vee H)=|V(G)|+ |V(H)|$.
The $e$-injective chromatic number of $G$  versus of the maximum degree and packing number of $G$ is investigated, and denote 
$ max\{\chi_{ei}(G), \chi_{ei}(H)\} \leq \chi_{ei}(G\square H)\leq  \chi_{2}(G)\chi_{2}(H).$
In Section 3, we obtain the exact value of  $e$-injective chromatic number of some standard graphs and finally, we argue 
on the $e$-injective coloring of grid graphs, cylinder graphs and tori graphs and gain their $e$-injective chromatic number in Section 4. We end the paper with  discussion on research problems.

\section{General results}
We maybe cannot compare $\chi_{ei}(G)$ with $\chi(G)$, $\Delta(G)$, $\chi_i(G)$ or $\chi_2(G)$ in general. For instance,  $\chi_{ei}(K_3)=1$
while $\chi(K_3)=3=\chi_i(K_3)=\chi_2(G)$; or $\chi_{ei}(K_{1,n})=1$ while $\chi(K_{1,n})=2,\ \chi_i(K_{1,n})=n$ and $\chi_2(K_{1,n})=n+1$.
But in the figure \ref{Fig-ex1}, $\chi_{ei}(G)= 12$, $\chi_{i}(G)\le7$, $\chi_{2}(G)\le7$, $\chi(G)= 3$.

\begin{figure}[h]
\centering
\begin{tikzpicture}[scale=.4, transform shape]
		\node [scale=1.3] at (0.3,2) {$5$};
		\node [scale=1.3] at (-1.2,2) {$4$};
		\node [scale=1.3] at (1.4,1.9) {$6$};
		\node [scale=1.3] at (3.8,0.8) {$7$};
		\node [scale=1.3] at (-2.7,2) {$3$};
        \node [scale=1.3] at (0.3,-0.5) {$11$};
        \node [scale=1.3] at (-2.7,-0.5) {$9$};
        \node [scale=1.3] at (1.8,-0.5) {$12$};
        \node [scale=1.3] at (-1.2,-0.5) {$10$};
        \node [scale=1.3] at (-4.2,2) {$2$};
        \node [scale=1.3] at (-6.2,0.8) {$1$};
        \node [scale=1.3] at (-4,-0.4) {$8$};

        \draw[line width=1pt](-4.2,1.5) - - (-4.2,0);
        \draw[line width=1pt](-4.2,1.5) - - (-2.7,1.5);
        \draw[line width=1pt](-4.2,0) - - (-2.7,0);
        \draw[line width=1pt](-2.7,1.5) - - (-4.2,0);
        \draw[line width=1pt](-4.2,1.5) - - (-5.7,0.75);
        \draw[line width=1pt](-4.2,0) - - (-5.7,0.75);
        \draw[line width=1pt](0.3,1.5) - - (0.3,0);
        \draw[line width=1pt](-2.7,1.5) - - (-2.7,0);
        \draw[line width=1pt](0.3,1.5) - - (-1.2,1.5);
        \draw[line width=1pt](0.3,0) - - (-2.7,0);
        \draw[line width=1pt](0.3,0) - - (1.8,0);
        \draw[line width=1pt](0.3,1.5) - - (1.8,1.5);
        \draw[line width=1pt](1.8,1.5) - - (1.8,0);
        \draw[line width=1pt](1.8,1.5) - - (3.3,0.75);
        \draw[line width=1pt](1.8,0) - - (3.3,0.75);
        \draw[line width=1pt](-1.2,1.5) - - (-2.7,0);
        \draw[line width=1pt](-1.2,1.5) - - (-2.7,1.5);
        \draw[line width=1pt](0.3,0) - - (1.8,1.5);
        \draw[line width=1pt](-1.2,0) - - (-1.2,1.5);
        \draw[line width=1pt](-1.2,0) - - (0.3,1.5);
        \draw[line width=1pt](3.3,0.75) to [bend right=45 ](-4.2,1.5);
        \draw[line width=1pt](-5.7,0.75) to [bend right=50 ](1.8,0);

		\node [draw, shape=circle,fill=white] (f) at  (-5.7,0.75) {};
	    \node [draw, shape=circle,fill=white] (a) at  (-4.2,1.5) {};
        \node [draw, shape=circle,fill=white] (b) at  (-2.7,1.5) {};
        \node [draw, shape=circle,fill=white] (c) at  (-1.2,1.5) {};
	    \node [draw, shape=circle,fill=white] (h) at  (-4.2,0) {};
        \node [draw, shape=circle,fill=white] (i) at  (-2.7,0) {};
        \node [draw, shape=circle,fill=white] (j) at  (-1.2,0) {};
	    \node [draw, shape=circle,fill=white] (d) at  (0.3,1.5) {};
        \node [draw, shape=circle,fill=white] (k) at  (0.3,0) {};
   		\node [draw, shape=circle,fill=white] (e) at  (1.8,1.5) {};
		\node [draw, shape=circle,fill=white] (l) at  (1.8,0) {};
		\node [draw, shape=circle,fill=white] (g) at  (3.3,0.75) {};

	\end{tikzpicture}
\caption{The graph $G$ with $\chi_{ei}(G)\ge max\{\chi_{i}(G), \chi_{2}(G), \chi(G)\}$}\label{Fig-ex1}
\end{figure}
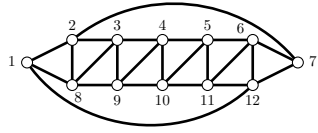

Also  let $H=K_m\circledcirc K_n$ be a graph obtain from two
complete graphs $K_m$ and $K_n$ ($m\ge n\ge 4$) with joining one vertex of $K_m$ to one vertex of $K_n$.
Then $\chi(H)=m=\chi_{i}(H)$, $\chi_{2}(H)=m+1$ and $\chi_{ei}(H)=m+n-1$ (see Observation \ref{obs-kn}).
On the other hand, for Complete graph $K_n$ ($n\ge 4$), odd cycle $C_{3k}$ for odd $k$, we have $\chi_{ei}(K_n)=n=\chi(K_n)=\chi_{i}(K_n)=\chi_{2}(K_n)$,
and $\chi_{ei}(C_{3k})=3=\chi(C_{3k})=\chi_{i}(C_{3k})=\chi_{2}(C_{3k})$.

In the same way, we have
$\Delta(K_3)=2$, $\Delta(H)=m$, and for graph $G$ in Figure \ref{Fig-ex1}, $\Delta(G)=4$,
while $\chi_{ei}(K_3)=1$, $\chi_{ei}(H)=m+n-1$, and
$\chi_{ei}(G)=12$. On the other hand, for  graph $K_n\circledcirc K_1$ ($n\ge 4$) and even cycle $C_{2k}$, we have $\chi_{ei}(K_n\circledcirc K_1)=n
=\Delta(K_n\circledcirc K_1)$, and $\chi_{ei}(C_{2k})= 2=\Delta(C_{2k})$ (see  Proposition \ref{prop-cn}).\\

However we have the following.
\begin{observation}\label{obs-e-uso}
Let $G$ be a graph in which any two adjacent vertices be the end vertices of  a path $P_4$ in $G$. Then $\chi(G) \le \chi_{ei}(G)$.\\
Conversely, if any two end vertices of each path $P_4$ in $G$ are adjacent, then $\chi_{ei}(G) \le \chi(G)$.
\end{observation}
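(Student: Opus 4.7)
My plan is to prove both inequalities directly from the definitions by showing that, under each hypothesis, one type of coloring automatically satisfies the condition of the other.

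For the first inequality, the approach is to take an optimal $e$-injective coloring $f: V(G) \to \{1, \ldots, \chi_{ei}(G)\}$ and verify it is already a proper coloring. Given any edge $uv \in E(G)$, the hypothesis provides a $P_4$ in $G$ with $u$ and $v$ as endpoints. By the defining property of $e$-injective coloring applied to this $P_4$, we get $f(u) \ne f(v)$. Thus $f$ properly colors every edge, so $\chi(G) \le \chi_{ei}(G)$.

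For the converse, the mirror argument works. Start with an optimal proper coloring $g: V(G) \to \{1, \ldots, \chi(G)\}$ and verify it is $e$-injective. For any path $P_4 = uxyv$ in $G$, the hypothesis tells us $uv \in E(G)$. Since $g$ is proper, $g(u) \ne g(v)$. This holds for every $P_4$, so $g$ is $e$-injective and $\chi_{ei}(G) \le \chi(G)$.

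Both directions are genuinely immediate once the hypotheses are unpacked; there is no real obstacle. The only thing to keep in mind for clarity is that we are using the characterization from the definition of $e$-injective coloring in terms of endpoints of $P_4$ paths (not the ``common edge neighbor'' phrasing), and that the hypotheses in the two halves are logically independent conditions on the graph $G$ — neither implies the other — so each half stands on its own.
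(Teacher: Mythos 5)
Your proof is correct and follows essentially the same route as the paper: you show that under the first hypothesis any $e$-injective coloring is automatically proper, and under the second hypothesis any proper coloring is automatically $e$-injective. The only difference is that you state the argument slightly more carefully (fixing optimal colorings and checking the relevant condition edge by edge, resp.\ path by path), which is a welcome clarification but not a different proof.
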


\begin{proof}
Since any two adjacent vertices of given graph $G$ are the end vertices of a path $P_4$ in $G$, these two vertices must be colored with distinct colors by any $e$-injective coloring. Therefore, any $e$-injective coloring for this graph can be a usual coloring. Then $\chi(G) \le \chi_{ei}(G)$.

Conversely, by the construction of graph $G$, usual coloring of $G$ deduces that any two end vertices of each path $P_4$ has different colors. Thus, the usual coloring of given $G$ is an $e$-injective coloring. Therefore, $\chi_{ei}(G) \le \chi(G)$.
\end{proof}

\begin{observation}\label{obs-e-inj}
Let $G$ be a graph and $v$ in $V(G)$ be any vertex.  If  every two vertices in $N(v)$  are the end vertices of a path $P_4$, then $\chi_{i}(G) \le \chi_{ei}(G)$.\\
Conversely, if end vertices of each path $P_4$ in a graph $G$ have a common neighbor, then  $\chi_{ei}(G) \le \chi_{i}(G)$.
\end{observation}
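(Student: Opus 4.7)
The plan is to mirror the strategy already used in Observation \ref{obs-e-uso}: in each direction show that a coloring satisfying one of the two constraints automatically satisfies the other, under the stated structural hypothesis, so that any valid coloring on one side is a valid coloring on the other, and the minimums are ordered accordingly.

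For the first inequality, I would start with an optimal $e$-injective coloring $f$ of $G$ using $\chi_{ei}(G)$ colors and show that $f$ is already an injective coloring. The key observation is that two vertices $u,w$ share a common neighbor in $G$ exactly when $u,w \in N(v)$ for some vertex $v$. By the hypothesis, any such pair $u,w$ occur as the end vertices of some $P_4$ in $G$, so the $e$-injective property of $f$ forces $f(u)\neq f(w)$. Hence $f$ is a valid injective coloring, giving $\chi_i(G)\le \chi_{ei}(G)$.

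For the converse, I would take an optimal injective coloring $f$ of $G$ using $\chi_i(G)$ colors and show it is $e$-injective. Let $P_4 = u x y v$ be any $P_4$ in $G$. By the hypothesis of the converse, $u$ and $v$ share a common neighbor, so the injective property of $f$ gives $f(u)\neq f(v)$. Thus $f$ is a valid $e$-injective coloring and $\chi_{ei}(G)\le \chi_i(G)$.

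There is no real obstacle here: the statement is essentially a translation between the defining conditions of the two chromatic parameters once the right structural hypothesis on $G$ is imposed. The only point requiring a bit of care is to state cleanly the equivalence ``two vertices have a common neighbor'' $\Longleftrightarrow$ ``they lie together in $N(v)$ for some $v$,'' so that the $e$-injective constraint can be applied to the right pairs in the first direction, and symmetrically for the second direction.
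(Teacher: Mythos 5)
Your proposal is correct and follows essentially the same argument as the paper: in each direction, the structural hypothesis lets one verify that an optimal coloring for one parameter already satisfies the defining constraint of the other. Your write-up is in fact slightly more careful than the paper's (which loosely says ``any two vertices of $G$'' where it means any two vertices sharing a common neighbor), but the underlying reasoning is identical.
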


\begin{proof}
Since any two  vertices of graph $G$ are the end vertices of a path $P_4$, these vertices have distinct colors by $e$-injective coloring. Therefore, an $e$-injective coloring for this graph is an injective coloring. Then $\chi_{i}(G) \le \chi_{ei}(G)$.

Conversely, let any two end vertices of  path $P_4$ have a common neighbor. Then by injective coloring of $G$, two end vertices of any path $P_4$ take
different colors. Thus, this will be an $e$-injective coloring of $G$ and $\chi_{ei}(G) \le \chi_{i}(G)$.
\end{proof}

Also, we may have.
\begin{observation}\label{obs-e-dis}
Let $G$ be a graph with the property that, for any three vertices on a path $P_3$ in $G$, any both of them are the end vertices of
a path $P_4$ in $G$, then  $\chi_{2}(G) \le \chi_{ei}(G)$.\\
Conversely, if $G$ is a graph and any two end vertices of each path $P_4$ in $G$ are adjacent or have a common neighbor, then $\chi_{ei}(G) \le \chi_{2}(G)$.
\end{observation}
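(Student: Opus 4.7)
The plan is to mirror the arguments used in Observations \ref{obs-e-uso} and \ref{obs-e-inj}: under each hypothesis, I show that an optimal coloring of one type already satisfies the constraints of the other type, so the corresponding chromatic numbers compare as claimed. The only subtlety is to parse the hypothesis of the forward direction correctly — ``three vertices on a path $P_3$'' covers all three pairs (two adjacent pairs and one distance-$2$ pair), i.e., it covers exactly the pairs at graph-distance at most $2$.

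For the forward inequality, I would start with an optimal $e$-injective coloring $f$ of $G$ using $\chi_{ei}(G)$ colors. A $2$-distance coloring must assign distinct colors to any two vertices at distance at most $2$, i.e., to any two vertices lying together on some $P_3$. By hypothesis, whenever three vertices form a $P_3$, every pair among them is the set of end vertices of some $P_4$ in $G$. In particular, any two vertices at distance $1$ or $2$ appear as the endpoints of some $P_4$, so $f$ is forced to give them different colors. Hence $f$ is a valid $2$-distance coloring and $\chi_2(G)\le \chi_{ei}(G)$.

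For the converse, I would take an optimal $2$-distance coloring $f$ of $G$ using $\chi_2(G)$ colors. The hypothesis guarantees that the two endpoints of each $P_4$ are either adjacent or share a common neighbor, so they lie at distance at most $2$ in $G$ and are thus assigned different colors by $f$. Consequently, $f$ is an $e$-injective coloring of $G$, giving $\chi_{ei}(G)\le \chi_2(G)$. No real obstacle arises beyond unpacking the hypotheses; the argument is completely parallel to the two preceding observations, and the only care needed is the translation between ``lying on a common $P_3$'', ``being at distance $\le 2$'', and ``being adjacent or sharing a common neighbor''.
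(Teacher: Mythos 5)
Your argument is correct and coincides with the paper's own proof: both directions proceed by showing that an optimal coloring of one kind already satisfies the defining constraint of the other, using exactly the translation between ``lying on a common $P_3$'', ``distance at most $2$'', and ``adjacent or sharing a common neighbor'' that you describe. No meaningful difference from the published proof.
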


\begin{proof}
Let $v, u, w$ be three vertices of $P_3$ in $G$. Since both of them are the end vertices of a path $P_4$ in $G$, then $e$-injective coloring of the given graph $G$ assign three distinct colors to $v, u, w$. This implies that, this coloring is a
$2$-distance coloring of $G$. Thus, $\chi_{2}(G) \le \chi_{ei}(G)$.

Conversely, let any two end vertices of each path $P_4$ are adjacent or have a common neighbor. Then, from a $2$-distance coloring of $G$, we deduce that, any two end
vertices of the path $P_4$ are  vertices of a $P_2$ or a $P_3$ in  graph $G$ and so their colors are distinct. Therefore this coloring is an $e$-injective coloring of the given graph $G$ and $\chi_{ei}(G) \le \chi_{2}(G)$.
\end{proof}

Now we discuss on the Problem \ref{prob-conj}. Below figures denote that the Problem \ref{prob-conj} is not necessarily true. On the other hand the type of conjectures 1, 2
are not true for $e$-injective coloring. But if we use the Observations \ref{obs-e-inj}, \ref{obs-e-dis}, then maybe characterize  graphs $G$ in which, satisfy on the Conjectures 1, 2
and also  characterize  graphs $G$ in which, the Conjectures 1, 2  are not true for them.
\vspace{2mm}

\noindent \textbf{Disprove of Problem} 1.3\\
Let $G$ be a planar graph with maximum degree $\Delta$. Then $\chi_{ei}(G)$ is at most

1. $5$, if $\Delta=3$,

2. $\Delta+5$, if $4 \le \Delta \le 7$,

3. $\lfloor\frac{3}{2} \Delta\rfloor + 1$, otherwise.\\

As we observe the Figure \ref{Fig-problem-n}, ($3\le \Delta \le 8$) we have.

The graph $M_3$ denotes a planar graph in which $\Delta(M_3)=3$ and $\chi_{ei}(M_3)=6> 5$.

The graph $M_4$ denotes a planar graph in which $\Delta(M_4)=4$ and $\chi_{ei}(M_4)=12> \Delta(M_4)+5$.

The graph $M_5$ denotes a planar graph in which $\Delta(M_5)=5$ and $\chi_{ei}(M_5)=13 > \Delta(M_5)+5$.

The graph $M_6$ denotes a planar graph in which $\Delta(M_6)=6$ and $\chi_{ei}(M_6)=16> \Delta(M_6)+5$.

The graph $M_7$ denotes a planar graph in which $\Delta(M_7)=7$ and $\chi_{ei}(M_7)=16> \Delta(M_7)+5$.

For $\Delta(G)= 8$, consider the graph $M_8$  of order $16$,  which is seen $\Delta(G)=8$ and $\chi_{ei}(M_n)=16> \lfloor \frac{24}{2} \rfloor+1=13=
\lfloor \frac{3\Delta}{2} \rfloor+1$.
\vspace{3mm}

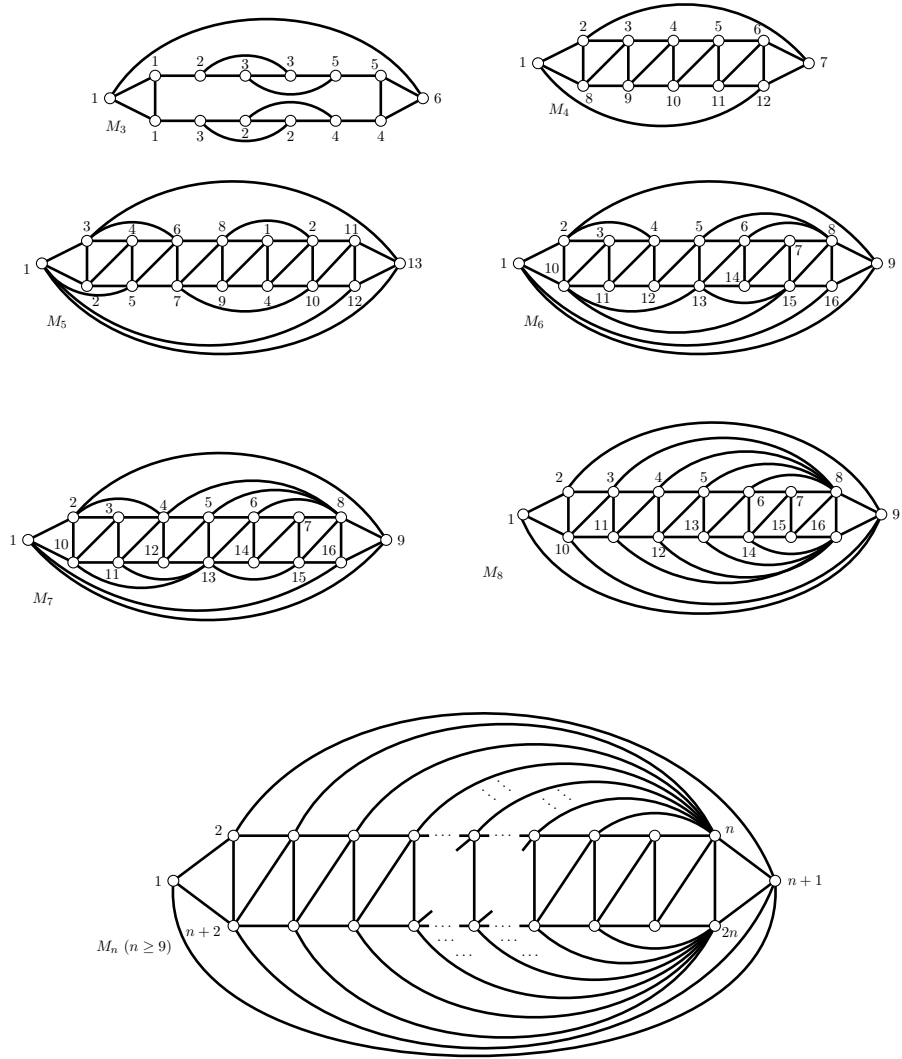
\begin{figure}[h]
\centering
	\begin{tikzpicture}[scale=.4, transform shape]
		\node [scale=1.3] at (-0.7,2) {$5$};
		\node [scale=1.3] at (-2.2,2) {$3$};
		\node [scale=1.3] at (0.6,1.9) {$5$};
		\node [scale=1.3] at (2.7,0.75) {$6$};
		\node [scale=1.3] at (-3.7,1.9) {$3$};
        \node [scale=1.3] at (-0.7,-0.5) {$4$};
        \node [scale=1.3] at (-3.7,-0.4) {$2$};
        \node [scale=1.3] at (0.8,-0.5) {$4$};
        \node [scale=1.3] at (-2.2,-0.5) {$2$};
        \node [scale=1.3] at (-5.2,2) {$2$};
        \node [scale=1.3] at (-8.7,0.75) {$1$};
        \node [scale=1.3] at (-5.2,-0.5) {$3$};
        \node [scale=1.3] at (-6.7,-0.5) {$1$};
        \node [scale=1.3] at (-6.7,2) {$1$};
\node [scale=1.3] at (-8,-.2) {$M_3$};
        \draw[line width=1pt](-8.2,0.75) - - (-6.7,0);
        \draw[line width=1pt](-8.2,0.75) - - (-6.7,1.5);
        \draw[line width=1pt](-6.7,1.5) - - (-5.2,1.5);
        \draw[line width=1pt](-6.7,0) - - (-5.2,0);
        \draw[line width=1pt](-6.7,1.5) - - (-6.7,0);
        \draw[line width=1pt](-5.2,1.5) - - (-3.7,1.5);
        \draw[line width=1pt](-5.2,0) - - (-3.7,0);
        \draw[line width=1pt](-3.7,1.5) - - (-2.2,1.5);
        \draw[line width=1pt](-3.7,0) - - (-2.2,0);
        \draw[line width=1pt](-2.2,1.5) - - (-0.7,1.5);
        \draw[line width=1pt](-2.2,0) - - (-0.7,0);
        \draw[line width=1pt](-0.7,1.5) - - (0.8,1.5);
        \draw[line width=1pt](-0.7,0) - - (0.8,0);
        \draw[line width=1pt](0.8,0) - - (0.8,1.5);
        \draw[line width=1pt](0.8,0) - - (2.2,0.75);
        \draw[line width=1pt](0.8,1.5) - - (2.2,0.75);

        \draw[line width=1pt](-8.2,.75) to [bend left=60 ](2.2,.75);
        \draw[line width=1pt](-0.7,1.5) to [bend left=45 ](-3.7,1.5);
        \draw[line width=1pt](-2.2,1.5) to [bend right=50 ](-5.2,1.5);
        \draw[line width=1pt](-0.7,0) to [bend right=45 ](-3.7,0);
        \draw[line width=1pt](-2.2,0) to [bend left=50 ](-5.2,0);
		\node [draw, shape=circle,fill=white] (g) at  (-8.2,0.75) {};
	    \node [draw, shape=circle,fill=white] (a) at  (-6.7,1.5) {};
	    \node [draw, shape=circle,fill=white] (b) at  (-5.2,1.5) {};
	    \node [draw, shape=circle,fill=white] (c) at  (-3.7,1.5) {};
	    \node [draw, shape=circle,fill=white] (d) at  (-2.2,1.5) {};
	    \node [draw, shape=circle,fill=white] (e) at  (-0.7,1.5) {};
	    \node [draw, shape=circle,fill=white] (f) at  (0.8,1.5) {};
	    \node [draw, shape=circle,fill=white] (i) at  (-6.7,0) {};
	    \node [draw, shape=circle,fill=white] (j) at  (-5.2,0) {};
	    \node [draw, shape=circle,fill=white] (k) at  (-3.7,0) {};
	    \node [draw, shape=circle,fill=white] (l) at  (-2.2,0) {};
	    \node [draw, shape=circle,fill=white] (m) at  (-0.7,0) {};
	    \node [draw, shape=circle,fill=white] (n) at  (0.8,0) {};
	    \node [draw, shape=circle,fill=white] (h) at  (2.2,0.75) {};

	\end{tikzpicture}
\qquad
    \centering
\begin{tikzpicture}[scale=.4, transform shape]
		\node [scale=1.3] at (0.3,2) {$5$};
		\node [scale=1.3] at (-1.2,2) {$4$};
		\node [scale=1.3] at (1.6,1.9) {$6$};
		\node [scale=1.3] at (3.8,0.8) {$7$};
		\node [scale=1.3] at (-2.7,2) {$3$};
        \node [scale=1.3] at (0.3,-0.5) {$11$};
        \node [scale=1.3] at (-2.7,-0.4) {$9$};
        \node [scale=1.3] at (1.8,-0.5) {$12$};
        \node [scale=1.3] at (-1.2,-0.5) {$10$};
        \node [scale=1.3] at (-4.2,2) {$2$};
        \node [scale=1.3] at (-6.2,0.8) {$1$};
        \node [scale=1.3] at (-4,-0.4) {$8$};
\node [scale=1.3] at (-5,-.8) {$M_4$};
        \draw[line width=1pt](-4.2,1.5) - - (-4.2,0);
        \draw[line width=1pt](-4.2,1.5) - - (-2.7,1.5);
        \draw[line width=1pt](-4.2,0) - - (-2.7,0);
        \draw[line width=1pt](-2.7,1.5) - - (-4.2,0);
        \draw[line width=1pt](-4.2,1.5) - - (-5.7,0.75);
        \draw[line width=1pt](-4.2,0) - - (-5.7,0.75);
        \draw[line width=1pt](0.3,1.5) - - (0.3,0);
        \draw[line width=1pt](-2.7,1.5) - - (-2.7,0);
        \draw[line width=1pt](0.3,1.5) - - (-1.2,1.5);
        \draw[line width=1pt](0.3,0) - - (-2.7,0);
        \draw[line width=1pt](0.3,0) - - (1.8,0);
        \draw[line width=1pt](0.3,1.5) - - (1.8,1.5);
        \draw[line width=1pt](1.8,1.5) - - (1.8,0);
        \draw[line width=1pt](1.8,1.5) - - (3.3,0.75);
        \draw[line width=1pt](1.8,0) - - (3.3,0.75);
        \draw[line width=1pt](-1.2,1.5) - - (-2.7,0);
        \draw[line width=1pt](-1.2,1.5) - - (-2.7,1.5);
        \draw[line width=1pt](0.3,0) - - (1.8,1.5);
        \draw[line width=1pt](-1.2,0) - - (-1.2,1.5);
        \draw[line width=1pt](-1.2,0) - - (0.3,1.5);
        \draw[line width=1pt](3.3,0.75) to [bend right=45 ](-4.2,1.5);
        \draw[line width=1pt](-5.7,0.75) to [bend right=50 ](1.8,0);

		\node [draw, shape=circle,fill=white] (f) at  (-5.7,0.75) {};
	    \node [draw, shape=circle,fill=white] (a) at  (-4.2,1.5) {};
        \node [draw, shape=circle,fill=white] (b) at  (-2.7,1.5) {};
        \node [draw, shape=circle,fill=white] (c) at  (-1.2,1.5) {};
	    \node [draw, shape=circle,fill=white] (h) at  (-4.2,0) {};
        \node [draw, shape=circle,fill=white] (i) at  (-2.7,0) {};
        \node [draw, shape=circle,fill=white] (j) at  (-1.2,0) {};
	    \node [draw, shape=circle,fill=white] (d) at  (0.3,1.5) {};
        \node [draw, shape=circle,fill=white] (k) at  (0.3,0) {};
   		\node [draw, shape=circle,fill=white] (e) at  (1.8,1.5) {};
		\node [draw, shape=circle,fill=white] (l) at  (1.8,0) {};
		\node [draw, shape=circle,fill=white] (g) at  (3.3,0.75) {};

	\end{tikzpicture}
\qquad
    \centering
\begin{tikzpicture}[scale=.4, transform shape]
		\node [scale=1.3] at (-0.7,1.9) {$1$};
		\node [scale=1.3] at (-2.2,2) {$8$};
		\node [scale=1.3] at (0.8,2) {$2$};
		\node [scale=1.3] at (2.1,1.9) {$11$};
		\node [scale=1.3] at (2.2,-0.5) {$12$};
		\node [scale=1.3] at (4.2,0.75) {$13$};
		\node [scale=1.3] at (-3.7,1.9) {$6$};
        \node [scale=1.3] at (-0.7,-0.5) {$4$};
        \node [scale=1.3] at (-3.7,-0.5) {$7$};
        \node [scale=1.3] at (0.8,-0.5) {$10$};
        \node [scale=1.3] at (-2.2,-0.5) {$9$};
        \node [scale=1.3] at (-5.2,1.9) {$4$};
        \node [scale=1.3] at (-8.7,0.55) {$1$};
        \node [scale=1.3] at (-5.2,-0.5) {$5$};
        \node [scale=1.3] at (-6.4,-0.5) {$2$};
        \node [scale=1.3] at (-6.7,2) {$3$};
\node [scale=1.3] at (-7.7,-1.2) {$M_5$};
        \draw[line width=1pt](-8.2,0.75) - - (-6.7,0);
        \draw[line width=1pt](-8.2,0.75) - - (-6.7,1.5);
        \draw[line width=1pt](-6.7,1.5) - - (-5.2,1.5);
        \draw[line width=1pt](-6.7,0) - - (-5.2,0);
        \draw[line width=1pt](-5.2,1.5) - - (-3.7,1.5);
        \draw[line width=1pt](-5.2,0) - - (-3.7,0);
        \draw[line width=1pt](-3.7,1.5) - - (-2.2,1.5);
        \draw[line width=1pt](-3.7,0) - - (-2.2,0);
        \draw[line width=1pt](-2.2,1.5) - - (-0.7,1.5);
        \draw[line width=1pt](-2.2,0) - - (-0.7,0);
        \draw[line width=1pt](-0.7,1.5) - - (0.8,1.5);
        \draw[line width=1pt](-0.7,0) - - (0.8,0);
        \draw[line width=1pt](0.8,0) - - (2.2,0);
        \draw[line width=1pt](0.8,1.5) - - (2.2,1.5);
        \draw[line width=1pt](2.2,0) - - (3.7,0.75);
        \draw[line width=1pt](2.2,1.5) - - (3.7,0.75);
        \draw[line width=1pt](2.2,0) - - (2.2,1.5);
        \draw[line width=1pt](0.8,0) - - (0.8,1.5);
        \draw[line width=1pt](-0.7,0) - - (-0.7,1.5);
        \draw[line width=1pt](-2.2,0) - - (-2.2,1.5);
        \draw[line width=1pt](-3.7,0) - - (-3.7,1.5);
        \draw[line width=1pt](-5.2,0) - - (-5.2,1.5);
        \draw[line width=1pt](-6.7,0) - - (-6.7,1.5);
        \draw[line width=1pt](-6.7,0) - - (-5.2,1.5);
        \draw[line width=1pt](-5.2,0) - - (-3.7,1.5);
        \draw[line width=1pt](-3.7,0) - - (-2.2,1.5);
        \draw[line width=1pt](-2.2,0) - - (-0.7,1.5);
        \draw[line width=1pt](-0.7,0) - - (0.8,1.5);
        \draw[line width=1pt](0.8,0) - - (2.2,1.5);

        \draw[line width=1pt](-6.7,1.5) to [bend left=45 ](-3.7,1.5);
        \draw[line width=1pt](-2.2,1.5) to [bend left=50 ](0.8,1.5);
        \draw[line width=1pt](-3.7,0) to [bend right=40 ](0.8,0);
        \draw[line width=1pt](-5.2,0) to [bend left=45 ](-8.2,0.75);
        \draw[line width=1pt](-8.2,0.75) to [bend right=50 ](2.2,0);
        \draw[line width=1pt](-8.2,0.75) to [bend right=60 ](3.7,0.75);
        \draw[line width=1pt](-6.7,1.5) to [bend left=50 ](3.7,0.75);
	   \node [draw, shape=circle,fill=white] (h) at  (-8.2,0.75) {};
	    \node [draw, shape=circle,fill=white] (a) at  (-6.7,1.5) {};
	    \node [draw, shape=circle,fill=white] (b) at  (-5.2,1.5) {};
	    \node [draw, shape=circle,fill=white] (c) at  (-3.7,1.5) {};
	    \node [draw, shape=circle,fill=white] (d) at  (-2.2,1.5) {};
	    \node [draw, shape=circle,fill=white] (e) at  (-0.7,1.5) {};
	    \node [draw, shape=circle,fill=white] (f) at  (0.8,1.5) {};
	    \node [draw, shape=circle,fill=white] (j) at  (-6.7,0) {};
	    \node [draw, shape=circle,fill=white] (k) at  (-5.2,0) {};
	    \node [draw, shape=circle,fill=white] (l) at  (-3.7,0) {};
	    \node [draw, shape=circle,fill=white] (m) at  (-2.2,0) {};
	    \node [draw, shape=circle,fill=white] (n) at  (-0.7,0) {};
	    \node [draw, shape=circle,fill=white] (o) at  (0.8,0) {};
	    \node [draw, shape=circle,fill=white] (g) at  (2.2,1.5) {};
	    \node [draw, shape=circle,fill=white] (p) at  (2.2,0) {};
	    \node [draw, shape=circle,fill=white] (i) at  (3.7,0.75) {};

	\end{tikzpicture}
\qquad
    \centering
\begin{tikzpicture}[scale=.4, transform shape]
		\node [scale=1.3] at (-0.7,2) {$6$};
		\node [scale=1.3] at (-2.2,2) {$5$};
		\node [scale=1.3] at (1.1,1.2) {$7$};
		\node [scale=1.3] at (2.2,1.9) {$8$};
		\node [scale=1.3] at (2.2,-0.5) {$16$};
		\node [scale=1.3] at (4.2,0.75) {$9$};
		\node [scale=1.3] at (-3.7,2) {$4$};
        \node [scale=1.3] at (-1.1,.3) {$14$};
        \node [scale=1.3] at (-3.9,-0.4) {$12$};
        \node [scale=1.3] at (0.8,-0.5) {$15$};
        \node [scale=1.3] at (-2.2,-0.5) {$13$};
        \node [scale=1.3] at (-5.5,1.8) {$3$};
        \node [scale=1.3] at (-8.7,0.75) {$1$};
        \node [scale=1.3] at (-5.4,-0.4) {$11$};
        \node [scale=1.3] at (-7.1,.6) {$10$};
        \node [scale=1.3] at (-6.7,2) {$2$};
\node [scale=1.3] at (-7.7,-1.2) {$M_6$};
       \draw[line width=1pt](-8.2,0.75) - - (-6.7,0);
        \draw[line width=1pt](-8.2,0.75) - - (-6.7,1.5);
        \draw[line width=1pt](-6.7,1.5) - - (-5.2,1.5);
        \draw[line width=1pt](-6.7,0) - - (-5.2,0);
        \draw[line width=1pt](-5.2,1.5) - - (-3.7,1.5);
        \draw[line width=1pt](-5.2,0) - - (-3.7,0);
        \draw[line width=1pt](-3.7,1.5) - - (-2.2,1.5);
        \draw[line width=1pt](-3.7,0) - - (-2.2,0);
        \draw[line width=1pt](-2.2,1.5) - - (-0.7,1.5);
        \draw[line width=1pt](-2.2,0) - - (-0.7,0);
        \draw[line width=1pt](-0.7,1.5) - - (0.8,1.5);
        \draw[line width=1pt](-0.7,0) - - (0.8,0);
        \draw[line width=1pt](0.8,0) - - (2.2,0);
        \draw[line width=1pt](0.8,1.5) - - (2.2,1.5);
        \draw[line width=1pt](2.2,0) - - (3.7,0.75);
        \draw[line width=1pt](2.2,1.5) - - (3.7,0.75);
        \draw[line width=1pt](2.2,0) - - (2.2,1.5);
        \draw[line width=1pt](0.8,0) - - (0.8,1.5);
        \draw[line width=1pt](-0.7,0) - - (-0.7,1.5);
        \draw[line width=1pt](-2.2,0) - - (-2.2,1.5);
        \draw[line width=1pt](-3.7,0) - - (-3.7,1.5);
        \draw[line width=1pt](-5.2,0) - - (-5.2,1.5);
        \draw[line width=1pt](-6.7,0) - - (-6.7,1.5);
        \draw[line width=1pt](-6.7,0) - - (-5.2,1.5);
        \draw[line width=1pt](-5.2,0) - - (-3.7,1.5);
        \draw[line width=1pt](-3.7,0) - - (-2.2,1.5);
        \draw[line width=1pt](-2.2,0) - - (-0.7,1.5);
        \draw[line width=1pt](-0.7,0) - - (0.8,1.5);
        \draw[line width=1pt](0.8,0) - - (2.2,1.5);

        \draw[line width=1pt](-6.7,1.5) to [bend left=45 ](-3.7,1.5);
        \draw[line width=1pt](-2.2,1.5) to [bend left=45 ](2.2,1.5);
        \draw[line width=1pt](-0.7,1.5) to [bend left=45 ](2.2,1.5);
        \draw[line width=1pt](-2.2,0) to [bend right=40 ](0.8,0);
        \draw[line width=1pt](-2.2,0) to [bend left=40 ](-6.7,0);
        \draw[line width=1pt](-6.7,0) to [bend right=45 ](0.8,0);
        \draw[line width=1pt](-8.2,0.75) to [bend right=50 ](2.2,0);
        \draw[line width=1pt](-8.2,0.75) to [bend right=60 ](3.7,0.75);
        \draw[line width=1pt](-6.7,1.5) to [bend left=50 ](3.7,0.75);
	   \node [draw, shape=circle,fill=white] (h) at  (-8.2,0.75) {};
	    \node [draw, shape=circle,fill=white] (a) at  (-6.7,1.5) {};
	    \node [draw, shape=circle,fill=white] (b) at  (-5.2,1.5) {};
	    \node [draw, shape=circle,fill=white] (c) at  (-3.7,1.5) {};
	    \node [draw, shape=circle,fill=white] (d) at  (-2.2,1.5) {};
	    \node [draw, shape=circle,fill=white] (e) at  (-0.7,1.5) {};
	    \node [draw, shape=circle,fill=white] (f) at  (0.8,1.5) {};
	    \node [draw, shape=circle,fill=white] (j) at  (-6.7,0) {};
	    \node [draw, shape=circle,fill=white] (k) at  (-5.2,0) {};
	    \node [draw, shape=circle,fill=white] (l) at  (-3.7,0) {};
	    \node [draw, shape=circle,fill=white] (m) at  (-2.2,0) {};
	    \node [draw, shape=circle,fill=white] (n) at  (-0.7,0) {};
	    \node [draw, shape=circle,fill=white] (o) at  (0.8,0) {};
	    \node [draw, shape=circle,fill=white] (g) at  (2.2,1.5) {};
	    \node [draw, shape=circle,fill=white] (p) at  (2.2,0) {};
	    \node [draw, shape=circle,fill=white] (i) at  (3.7,0.75) {};

	\end{tikzpicture}
\qquad
    \centering
\begin{tikzpicture}[scale=.4, transform shape]
		\node [scale=1.3] at (-0.7,2) {$6$};
		\node [scale=1.3] at (-2.2,2) {$5$};
		\node [scale=1.3] at (1.1,1.2) {$7$};
		\node [scale=1.3] at (2.2,2) {$8$};
		\node [scale=1.3] at (1.8,0.4) {$16$};
		\node [scale=1.3] at (4.2,0.75) {$9$};
		\node [scale=1.3] at (-3.7,1.9) {$4$};
        \node [scale=1.3] at (-1.1,.4) {$14$};
        \node [scale=1.3] at (-4.1,0.4) {$12$};
        \node [scale=1.3] at (0.8,-0.5) {$15$};
        \node [scale=1.3] at (-2.2,-0.5) {$13$};
        \node [scale=1.3] at (-5.5,1.8) {$3$};
        \node [scale=1.3] at (-8.7,0.75) {$1$};
        \node [scale=1.3] at (-5.4,-0.4) {$11$};
        \node [scale=1.3] at (-7.1,.6) {$10$};
        \node [scale=1.3] at (-6.7,2) {$2$};
\node [scale=1.3] at (-7.7,-1.2) {$M_7$};
       \draw[line width=1pt](-8.2,0.75) - - (-6.7,0);
        \draw[line width=1pt](-8.2,0.75) - - (-6.7,1.5);
        \draw[line width=1pt](-6.7,1.5) - - (-5.2,1.5);
        \draw[line width=1pt](-6.7,0) - - (-5.2,0);
        \draw[line width=1pt](-5.2,1.5) - - (-3.7,1.5);
        \draw[line width=1pt](-5.2,0) - - (-3.7,0);
        \draw[line width=1pt](-3.7,1.5) - - (-2.2,1.5);
        \draw[line width=1pt](-3.7,0) - - (-2.2,0);
        \draw[line width=1pt](-2.2,1.5) - - (-0.7,1.5);
        \draw[line width=1pt](-2.2,0) - - (-0.7,0);
        \draw[line width=1pt](-0.7,1.5) - - (0.8,1.5);
        \draw[line width=1pt](-0.7,0) - - (0.8,0);
        \draw[line width=1pt](0.8,0) - - (2.2,0);
        \draw[line width=1pt](0.8,1.5) - - (2.2,1.5);
        \draw[line width=1pt](2.2,0) - - (3.7,0.75);
        \draw[line width=1pt](2.2,1.5) - - (3.7,0.75);
        \draw[line width=1pt](2.2,0) - - (2.2,1.5);
        \draw[line width=1pt](0.8,0) - - (0.8,1.5);
        \draw[line width=1pt](-0.7,0) - - (-0.7,1.5);
        \draw[line width=1pt](-2.2,0) - - (-2.2,1.5);
        \draw[line width=1pt](-3.7,0) - - (-3.7,1.5);
        \draw[line width=1pt](-5.2,0) - - (-5.2,1.5);
        \draw[line width=1pt](-6.7,0) - - (-6.7,1.5);
        \draw[line width=1pt](-6.7,0) - - (-5.2,1.5);
        \draw[line width=1pt](-5.2,0) - - (-3.7,1.5);
        \draw[line width=1pt](-3.7,0) - - (-2.2,1.5);
        \draw[line width=1pt](-2.2,0) - - (-0.7,1.5);
        \draw[line width=1pt](-0.7,0) - - (0.8,1.5);
        \draw[line width=1pt](0.8,0) - - (2.2,1.5);

        \draw[line width=1pt](-6.7,1.5) to [bend left=45 ](-3.7,1.5);
        \draw[line width=1pt](-2.2,1.5) to [bend left=45 ](2.2,1.5);
        \draw[line width=1pt](-0.7,1.5) to [bend left=45 ](2.2,1.5);
        \draw[line width=1pt](-3.7,1.5) to [bend left=45 ](2.2,1.5);
        \draw[line width=1pt](-2.2,0) to [bend right=40 ](0.8,0);
        \draw[line width=1pt](-2.2,0) to [bend left=40 ](-6.7,0);
        \draw[line width=1pt](-5.2,0) to [bend right=40 ](-2.2,0);
        \draw[line width=1pt](-8.2,0.75) to [bend right=40 ](2.2,0);
        \draw[line width=1pt](-8.2,0.75) to [bend right=50 ](3.7,0.75);
        \draw[line width=1pt](-6.7,1.5) to [bend left=55 ](3.7,0.75);
	   \node [draw, shape=circle,fill=white] (h) at  (-8.2,0.75) {};
	    \node [draw, shape=circle,fill=white] (a) at  (-6.7,1.5) {};
	    \node [draw, shape=circle,fill=white] (b) at  (-5.2,1.5) {};
	    \node [draw, shape=circle,fill=white] (c) at  (-3.7,1.5) {};
	    \node [draw, shape=circle,fill=white] (d) at  (-2.2,1.5) {};
	    \node [draw, shape=circle,fill=white] (e) at  (-0.7,1.5) {};
	    \node [draw, shape=circle,fill=white] (f) at  (0.8,1.5) {};
	    \node [draw, shape=circle,fill=white] (j) at  (-6.7,0) {};
	    \node [draw, shape=circle,fill=white] (k) at  (-5.2,0) {};
	    \node [draw, shape=circle,fill=white] (l) at  (-3.7,0) {};
	    \node [draw, shape=circle,fill=white] (m) at  (-2.2,0) {};
	    \node [draw, shape=circle,fill=white] (n) at  (-0.7,0) {};
	    \node [draw, shape=circle,fill=white] (o) at  (0.8,0) {};
	    \node [draw, shape=circle,fill=white] (g) at  (2.2,1.5) {};
	    \node [draw, shape=circle,fill=white] (p) at  (2.2,0) {};
	    \node [draw, shape=circle,fill=white] (i) at  (3.7,0.75) {};

	\end{tikzpicture}
\qquad
    \centering
\begin{tikzpicture}[scale=.4, transform shape]
		\node [scale=1.3] at (-0.7,2) {$5$};
		\node [scale=1.3] at (-2.2,2) {$4$};
        \node [scale=1.3] at (3.8,2) {$8$};
		\node [scale=1.3] at (3.1,.4) {$16$};
		\node [scale=1.3] at (1.2,1.2) {$6$};
		\node [scale=1.3] at (2.5,1.2) {$7$};
		\node [scale=1.3] at (1.8,0.4) {${15}$};
		\node [scale=1.3] at (5.7,0.75) {${9}$};
		\node [scale=1.3] at (-3.8,2) {$3$};
        \node [scale=1.3] at (-1.1,.4) {${13}$};
        \node [scale=1.3] at (-4.1,0.4) {${11}$};
        \node [scale=1.3] at (0.8,-0.5) {${14}$};
        \node [scale=1.3] at (-2.2,-0.5) {${12}$};
        \node [scale=1.3] at (-5.5,2) {$2$};
        %\node [scale=1.3] at (-8.7,0.75) {$1$};
        \node [scale=1.3] at (-5.4,-0.4) {${10}$};
        \node [scale=1.3] at (-7.1,.6) {${1}$};
        %\node [scale=1.3] at (-6.8,2) {$2$};
\node [scale=1.3] at (-7.7,-1.2) {$M_8$};
       \draw[line width=1pt](-6.7,0.75) - - (-5.2,0);
        \draw[line width=1pt](-6.7,0.75) - - (-5.2,1.5);
        \draw[line width=1pt](5.2,0.75) - - (3.7,0);
        \draw[line width=1pt](5.2,0.75) - - (3.7,1.5);
        \draw[line width=1pt](2.2,0) - - (3.7,1.5);
        %\draw[line width=1pt](-6.7,1.5) - - (-5.2,1.5);
        %\draw[line width=1pt](-6.7,0) - - (-5.2,0);
        \draw[line width=1pt](-5.2,1.5) - - (-3.7,1.5);
        \draw[line width=1pt](-5.2,0) - - (-3.7,0);
        \draw[line width=1pt](-3.7,1.5) - - (-2.2,1.5);
        \draw[line width=1pt](-3.7,0) - - (-2.2,0);
        \draw[line width=1pt](-2.2,1.5) - - (-0.7,1.5);
        \draw[line width=1pt](-2.2,0) - - (-0.7,0);
        \draw[line width=1pt](-0.7,1.5) - - (0.8,1.5);
        \draw[line width=1pt](-0.7,0) - - (0.8,0);
        \draw[line width=1pt](0.8,0) - - (2.2,0);
        \draw[line width=1pt](0.8,1.5) - - (2.2,1.5);
        \draw[line width=1pt](2.2,0) - - (3.7,0);
        \draw[line width=1pt](2.2,1.5) - - (3.7,1.5);
        \draw[line width=1pt](2.2,0) - - (2.2,1.5);
         \draw[line width=1pt](3.7,0) - - (3.7,1.5);
        \draw[line width=1pt](0.8,0) - - (0.8,1.5);
        \draw[line width=1pt](-0.7,0) - - (-0.7,1.5);
        \draw[line width=1pt](-2.2,0) - - (-2.2,1.5);
        \draw[line width=1pt](-3.7,0) - - (-3.7,1.5);
        \draw[line width=1pt](-5.2,0) - - (-5.2,1.5);
        %\draw[line width=1pt](-6.7,0) - - (-6.7,1.5);
        %\draw[line width=1pt](-6.7,0) - - (-5.2,1.5);
        \draw[line width=1pt](-5.2,0) - - (-3.7,1.5);
        \draw[line width=1pt](-3.7,0) - - (-2.2,1.5);
        \draw[line width=1pt](-2.2,0) - - (-0.7,1.5);
        \draw[line width=1pt](-0.7,0) - - (0.8,1.5);
        \draw[line width=1pt](0.8,0) - - (2.2,1.5);

        \draw[line width=1pt](.7,1.5) to [bend left=40 ](3.7,1.5);
        \draw[line width=1pt](-.8,1.5) to [bend left=45 ](3.7,1.5);
        \draw[line width=1pt](-2.3,1.5) to [bend left=50 ](3.7,1.5);
        \draw[line width=1pt](-3.8,1.5) to [bend left=55 ](3.7,1.5);
        %\draw[line width=1pt](-5.3,1.5) to [bend left=60 ](3.7,1.5);
        %\draw[line width=1pt](5.2,0.75) to [bend left=50 ](-6.8,0);
        \draw[line width=1pt](5.2,0.75) to [bend left=70 ](-6.8,0.75);
        \draw[line width=1pt](5.2,0.75) to [bend right=60 ](-5.3,1.5);
        \draw[line width=1pt](-5.2,0.) to [bend right=58 ](5.2,.75);

        \draw[line width=1pt](.7,0) to [bend right=40 ](3.7,0);
        \draw[line width=1pt](-.8,0) to [bend right=45 ](3.7,0);
        \draw[line width=1pt](-2.3,0) to [bend right=50 ](3.7,0);
        \draw[line width=1pt](-3.7,0) to [bend right=45 ](3.7,0);
        %\draw[line width=1pt](3.7,0) to [bend left=50 ](-5.2,0);

	   %\node [draw, shape=circle,fill=white] (h) at  (-8.2,0.75) {};
	    \node [draw, shape=circle,fill=white] (a) at  (-6.7,.75) {};
	    \node [draw, shape=circle,fill=white] (b) at  (-5.2,1.5) {};
	    \node [draw, shape=circle,fill=white] (c) at  (-3.7,1.5) {};
	    \node [draw, shape=circle,fill=white] (d) at  (-2.2,1.5) {};
	    \node [draw, shape=circle,fill=white] (e) at  (-0.7,1.5) {};
	    \node [draw, shape=circle,fill=white] (f) at  (0.8,1.5) {};
        \node [draw, shape=circle,fill=white] (f) at  (3.7,1.5) {};
	    %\node [draw, shape=circle,fill=white] (j) at  (-6.7,0) {};
	    \node [draw, shape=circle,fill=white] (k) at  (-5.2,0) {};
	    \node [draw, shape=circle,fill=white] (l) at  (-3.7,0) {};
	    \node [draw, shape=circle,fill=white] (m) at  (-2.2,0) {};
	    \node [draw, shape=circle,fill=white] (n) at  (-0.7,0) {};
	    \node [draw, shape=circle,fill=white] (o) at  (0.8,0) {};
\node [draw, shape=circle,fill=white] (f) at  (3.7,0) {};
	    \node [draw, shape=circle,fill=white] (g) at  (2.2,1.5) {};
	    \node [draw, shape=circle,fill=white] (p) at  (2.2,0) {};
	    \node [draw, shape=circle,fill=white] (i) at  (5.2,0.75) {};
%\end{tikzpicture}
%\caption{The graphs $G$ related to Problem 1.3 for $3 \le \Delta\le 8$.}\label{Fig-problem}
%\end{figure}
\end{tikzpicture}
\qquad
    \centering

%\begin{figure}[h]
%\centering
    \begin{tikzpicture}[scale=.4, transform shape]
		\node [scale=1.3] at (-10.5,1) {$1$};
		\node [scale=1.3] at (-8.5,2.7) {$2$};
        \node [scale=1.3] at (8.5,2.7) {$n$};
		\node [scale=1.3] at (11,1) {$n+1$};
		\node [scale=1.3] at (-9,-.7) {$n+2$};
		\node [scale=1.3] at (8.5,-.7) {$2n$};
\node [scale=1.3] at (-11.3,-1.2) {$M_n$ ($n\ge 9$)};
       \draw[line width=1pt](-10,1) - - (-8,2.5);
        \draw[line width=1pt](-10,1) - - (-8,-.5);

        \draw[line width=1pt](10,1) - - (8,2.5);
        \draw[line width=1pt](10,1) - - (8,-.5);

        \draw[line width=1pt](-8,2.5) - - (-6,2.5);
        \draw[line width=1pt](-6,2.5) - - (-4,2.5);
        \draw[line width=1pt](-4,2.5) - - (-2,2.5);
        \draw[line width=1pt](-2,2.5) - - (-1.5,2.5);

        \draw[line width=1pt](1.5,2.5) - - (2,2.5);
        \draw[line width=1pt](2,2.5) - - (4,2.5);
        \draw[line width=1pt](4,2.5) - - (6,2.5);
        \draw[line width=1pt](6,2.5) - - (8,2.5);

        \draw[line width=1pt](0,2.5) - - (.5,2.5);
        \draw[line width=1pt](0,2.5) - - (-.5,2.5);
        \draw[line width=1pt](0,-.5) - - (.5,-.5);
        \draw[line width=1pt](0,-.5) - - (-.5,-.5);

        \draw[line width=1pt](-8,-.5) - - (-6,-.5);
        \draw[line width=1pt](-6,-.5) - - (-4,-.5);
        \draw[line width=1pt](-4,-.5) - - (-2,-.5);
        \draw[line width=1pt](-2,-.5) - - (-1.5,-.5);

        \draw[line width=1pt](1.5,-.5) - - (2,-.5);
        \draw[line width=1pt](2,-.5) - - (4,-.5);
        \draw[line width=1pt](4,-.5) - - (6,-.5);
        \draw[line width=1pt](6,-.5) - - (8,-.5);

        \draw[line width=1pt](-8,2.5) - - (-8,-.5);
        \draw[line width=1pt](-6,2.5) - - (-6,-.5);
        \draw[line width=1pt](-4,2.5) - - (-4,-.5);
        \draw[line width=1pt](-2,2.5) - - (-2,-.5);

        \draw[line width=1pt](0,2.5) - - (0,-.5);

        \draw[line width=1pt](2,2.5) - - (2,-.5);
        \draw[line width=1pt](4,2.5) - - (4,-.5);
        \draw[line width=1pt](6,2.5) - - (6,-.5);
        \draw[line width=1pt](8,2.5) - - (8,-.5);

        \draw[line width=1pt](8,2.5) - - (6,-.5);
        \draw[line width=1pt](6,2.5) - - (4,-.5);
        \draw[line width=1pt](4,2.5) - - (2,-.5);
        \draw[line width=1pt](2,2.5) - - (1.6,2);

        \draw[line width=1pt](0,2.5) - - (-.6,2);
        \draw[line width=1pt](0,-.5) - - (.6,0);
        \draw[line width=1pt](-2,-.5) - - (-1.4,0);

        \draw[line width=1pt](-2,2.5) - - (-4,-.5);
        \draw[line width=1pt](-4,2.5) - - (-6,-.5);
        \draw[line width=1pt](-6,2.5) - - (-8,-.5);

       \draw[line width=1pt](8,2.5) to [bend right=40 ](4,2.5);
        \draw[line width=1pt](8,2.5) to [bend right=45 ](2,2.5);

        \node [scale=1.3] at (3,4) {$\ddots $};
        \node [scale=1.3] at (2.5,3.7) {$\ddots $};
        \node [scale=1.3] at (.5,4) {$\ddots $};
        \node [scale=1.3] at (1,4.3) {$\ddots $};

        \node [scale=1.3] at (1.2,-1) {$\cdots $};
        \node [scale=1.3] at (1.9,-1.5) {$\cdots $};
        \node [scale=1.3] at (-.9,-1) {$\cdots $};
        \node [scale=1.3] at (-.3,-1.5) {$\cdots $};

        \draw[line width=1pt](8,2.5) to [bend right=50 ](0,2.5);
        \draw[line width=1pt](8,2.5) to [bend right=55 ](-2,2.5);
        \draw[line width=1pt](8,2.5) to [bend right=60 ](-4,2.5);
        \draw[line width=1pt](8,2.5) to [bend right=65 ](-6,2.5);
        \draw[line width=1pt](10,1) to [bend right=65 ](-8,2.5);

        \draw[line width=1pt](8,-.5) to [bend left=40 ](4,-.5);
        \draw[line width=1pt](8,-.5) to [bend left=45 ](2,-.5);
        \draw[line width=1pt](8,-.5) to [bend left=50 ](0,-.5);
        \draw[line width=1pt](8,-.5) to [bend left=55 ](-2,-.5);
        \draw[line width=1pt](8,-.5) to [bend left=60 ](-4,-.5);
        \draw[line width=1pt](8,-.5) to [bend left=65 ](-6,-.5);
        \draw[line width=1pt](10,1) to [bend left=65 ](-8,-.5);
        \draw[line width=1pt](10,1) to [bend left=95 ](-10,1);

	   \node [draw, shape=circle,fill=white] (h) at  (-10,1) {};
	    \node [draw, shape=circle,fill=white] (a) at  (-8,2.5) {};
	    \node [draw, shape=circle,fill=white] (b) at  (-6,2.5) {};
	    \node [draw, shape=circle,fill=white] (c) at  (-4,2.5) {};
        \node [draw, shape=circle,fill=white] (d) at  (-2,2.5) {};
\node [scale=1.3] at (-1,2.5) {$\dots$};
	    \node [draw, shape=circle,fill=white] (e) at  (0,2.5) {};
\node [scale=1.3] at (1,2.5) {$\dots$};
        \node [draw, shape=circle,fill=white] (e) at  (0,-.5) {};
	    \node [draw, shape=circle,fill=white] (f) at  (2,2.5) {};
        \node [draw, shape=circle,fill=white] (t) at  (4,2.5) {};
        \node [draw, shape=circle,fill=white] (s) at  (6,2.5) {};
        \node [draw, shape=circle,fill=white] (r) at  (8,2.5) {};

	    \node [draw, shape=circle,fill=white] (h) at  (10,1) {};
	    \node [draw, shape=circle,fill=white] (a) at  (-8,-.5) {};
	    \node [draw, shape=circle,fill=white] (b) at  (-6,-.5) {};
	    \node [draw, shape=circle,fill=white] (c) at  (-4,-.5) {};
        \node [draw, shape=circle,fill=white] (d) at  (-2,-.5) {};
\node [scale=1.3] at (-1,-.5) {$\dots$};
	    \node [draw, shape=circle,fill=white] (e) at  (0,2.5) {};
\node [scale=1.3] at (1,-.5) {$\dots$};
	    \node [draw, shape=circle,fill=white] (f) at  (2,-.5) {};
        \node [draw, shape=circle,fill=white] (t) at  (4,-.5) {};
        \node [draw, shape=circle,fill=white] (s) at  (6,-.5) {};
        \node [draw, shape=circle,fill=white] (r) at  (8,-.5) {};
	\end{tikzpicture}
\caption{The graphs $G$  related to Problem 1.3 for $\Delta \ge 3$.}\label{Fig-problem-n}
\end{figure}
%\newpage

For $\Delta(G)\ge 9$ consider, the graph $M_n$  ($n\ge 9$) of order $2n$, Figure \ref{Fig-problem-n}, which is seen $\Delta(G)=n$ and $\chi_{ei}(M_n)=2n> \lfloor \frac{3n}{2} \rfloor+1 =\lfloor \frac{3\Delta}{2} \rfloor+1$.

With this regard, the Problem \ref{prob-conj} is disproved. In the other words the type of Conjecture 2 for $e$-injective coloring is rejected. However, from the Observations \ref{obs-e-inj} and \ref{obs-e-dis}, we can have.

\begin{corollary}
Let $G$ be a graph with the property that the given data in Observation, \ref{obs-e-dis} (Conversely part) hold. Then the Conjecture 1 is wrong.\\
Let $G$ be a graph with the property that the given data in Observation, \ref{obs-e-inj} (Conversely part) hold. Then the Conjecture 2 is wrong.\\
\end{corollary}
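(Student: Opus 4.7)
The plan is to combine the inequalities supplied by the converse parts of Observations \ref{obs-e-inj} and \ref{obs-e-dis} with the explicit counterexamples $M_n$ to Problem \ref{prob-conj} that were just exhibited.

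For the first statement, I would suppose that a planar graph $G$ satisfies the converse hypothesis of Observation \ref{obs-e-dis}, namely that any two end vertices of every path $P_4$ in $G$ are adjacent or share a common neighbor. By that observation, $\chi_{ei}(G)\le \chi_2(G)$. Now if $G$ is additionally one of the graphs $M_n$ with $\Delta(G)\ge 4$ from the disproof above, then $\chi_{ei}(G)$ strictly exceeds $\Delta+5$ or $\lfloor 3\Delta/2\rfloor+1$, according to the range of $\Delta$. Since these thresholds coincide with the corresponding bounds of Conjecture 1 (which differs from Problem \ref{prob-conj} only in the value $7$ vs.\ $5$ when $\Delta=3$), we deduce
$$\chi_2(G)\ \ge\ \chi_{ei}(G)\ >\ \text{(Conjecture 1 bound)},$$
so the same graph $G$ refutes Conjecture 1.

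For the second statement the argument is analogous, invoking Observation \ref{obs-e-inj}: its converse hypothesis yields $\chi_{ei}(G)\le \chi_i(G)$. Because the bounds in Problem \ref{prob-conj} and Conjecture 2 coincide in every case (including $\Delta=3$), any planar counterexample $G$ that satisfies the $P_4$-end-vertex condition of Observation \ref{obs-e-inj} produces $\chi_i(G)\ge \chi_{ei}(G)>$ (Conjecture 2 bound), and so Conjecture 2 fails for $G$ as well.

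The chain of inequalities is almost automatic once the observations and the counterexamples are in hand; the genuine obstacle is the structural verification. The converse hypotheses of Observations \ref{obs-e-inj} and \ref{obs-e-dis} are restrictive conditions on how the $P_4$'s of $G$ are embedded, and they are not automatically satisfied by the $M_n$ as drawn in Figure \ref{Fig-problem-n}. I would therefore either check case by case which $M_n$ satisfy the required closure property, or modify each $M_n$ by adding a few judiciously chosen shortcut edges so that every pair of end vertices of a $P_4$ becomes adjacent or acquires a common neighbor, while preserving planarity and $\Delta(G)$. Once such a witness is in hand, the corollary follows at once from the chain $\chi_2(G)\ge \chi_{ei}(G)$ (respectively $\chi_i(G)\ge \chi_{ei}(G)$) together with the disproof of Problem \ref{prob-conj} already established.
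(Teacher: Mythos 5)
Your chain of inequalities is exactly the argument the paper intends: the corollary appears in the text with no proof at all, and the implicit justification is precisely that the converse part of Observation \ref{obs-e-dis} gives $\chi_{ei}(G)\le\chi_2(G)$ (respectively, Observation \ref{obs-e-inj} gives $\chi_{ei}(G)\le\chi_i(G)$), so that a planar graph which both violates the bounds of Problem \ref{prob-conj} and satisfies the closure hypothesis would violate Conjecture 1 (respectively, Conjecture 2). You have also correctly isolated where this argument is incomplete, and it is worth being explicit that the gap you flag is a gap in the corollary itself, not merely in your write-up. As stated, the hypothesis ``the converse part of Observation \ref{obs-e-dis} holds for $G$'' yields only the inequality $\chi_{ei}(G)\le\chi_2(G)$; it says nothing about Conjecture 1 unless one additionally knows that $G$ is planar and that $\chi_{ei}(G)$ exceeds the relevant threshold. (A trivial example: $C_4$ satisfies the closure hypothesis, yet Conjecture 1 holds for it.) So the corollary silently requires the extra hypothesis that $G$ is a counterexample to Problem \ref{prob-conj}, and then a verification that some such counterexample actually satisfies the closure condition.

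That verification is the genuine obstacle, and neither the paper nor your proposal supplies it. The graphs $M_n$ of Figure \ref{Fig-problem-n} are engineered so that many pairs of vertices are joined by a $P_4$, but nothing guarantees that every such pair is adjacent or has a common neighbor; and if some planar $M_n$ with $\Delta\ge 4$ did satisfy the hypothesis of Observation \ref{obs-e-dis}, it would be an outright counterexample to Wegner's long-standing conjecture, which is widely believed to be true. One should therefore be skeptical that the ``judiciously chosen shortcut edges'' you propose can be added while simultaneously preserving planarity, keeping $\Delta$ fixed, and keeping $\chi_{ei}$ above the bound. Until a concrete witness is produced, both statements of the corollary are at best conditional and possibly vacuous. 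Your proposal is as complete as the paper's own treatment of this corollary, and considerably more candid about where the burden of proof actually lies.
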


For tree $T$ we have.
\begin{proposition}\label{prop equ-T}
Let $T$ be a tree. Then

\emph{1}. $\chi_{ei}(T)=1$  if and only if $diam(T)\le 2$.

\emph{2}. $\chi_{ei}(T)=2$  if and only if $diam(T)\ge 3$.
\end{proposition}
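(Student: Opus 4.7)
The plan is to reduce the claim to the structure of paths of length exactly $3$ in a tree, and then exhibit a $2$-coloring built from a rooted depth parity.

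For part (1), I would argue that $\chi_{ei}(T)=1$ holds precisely when the all-equal coloring is $e$-injective, which by definition requires that $T$ contains no $P_4$. Because $T$ is a tree, the existence of a $P_4$ (as an induced or non-induced subgraph) is equivalent to the existence of a path of length $3$, which in turn is equivalent to $\operatorname{diam}(T)\ge 3$. Thus the absence of any $P_4$ is equivalent to $\operatorname{diam}(T)\le 2$, giving the first equivalence immediately.

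For part (2), note that part (1) already covers the case $\operatorname{diam}(T)\le 2$, so the only work is to show $\chi_{ei}(T)=2$ whenever $\operatorname{diam}(T)\ge 3$. The lower bound $\chi_{ei}(T)\ge 2$ is immediate: a path of length $3$ in $T$ forces its two endpoints to receive different colors. For the upper bound, I would fix an arbitrary root $r\in V(T)$ and define $f(v)=d_T(r,v)\bmod 2$. The key step is the observation that in a tree, two vertices $u,v$ are endpoints of a $P_4$ if and only if $d_T(u,v)=3$ (otherwise the edges of the $P_4$ together with a shorter connecting path would produce a cycle, contradicting that $T$ is a tree). Using the standard tree identity $d_T(u,v)=d_T(r,u)+d_T(r,v)-2d_T(r,\ell)$, where $\ell$ is the least common ancestor of $u$ and $v$, the parities satisfy $d_T(u,v)\equiv d_T(r,u)+d_T(r,v)\pmod 2$. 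Since $d_T(u,v)=3$ is odd, $f(u)\ne f(v)$, so $f$ is an $e$-injective $2$-coloring.

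The main (and really the only) obstacle is packaging the parity-of-depth argument cleanly, in particular justifying that endpoints of a $P_4$ in a tree are exactly vertices at distance $3$; once this is in place, both parts follow in a few lines, and no case analysis on the shape of $T$ is required.
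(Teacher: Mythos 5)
Your proof is correct and takes essentially the same route as the paper: part (1) via the absence of any $P_4$ when $\operatorname{diam}(T)\le 2$, and part (2) via a $2$-coloring by parity of distance to a fixed root (the paper roots at a vertex of maximum degree, but that choice plays no role). Your justification of the key step—that endpoints of a $P_4$ in a tree are exactly the pairs at distance $3$, combined with the parity identity through the least common ancestor—is in fact tighter than the paper's, which instead argues somewhat loosely that no three vertices can be pairwise at distance $3$.
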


\begin{proof}
1. If $diam(T)=1$, then $T=P_2$  and if $diam(T)=2$, then $T$ is a star and since there is no path of length $3$ between any two vertices,
$\chi_{ei}(T)=1$.\\
Conversely, let $\chi_{ei}(T)=1$. Then there is no path of length $3$ in $T$. Thus $diam(T)\le 2$.

2.  Let $diam(T)\ge 3$ and $v_0$ be a vertex of maximum degree in $T$. We assign color $1$ to the $v_0$ and to the vertex $u$ if $d(u,v_0)$ is even, and  color $2$ to the vertex $u$ if $d(u,v_0)$ is odd. Since there is  only one path between any two vertices in any tree $T$, so
if two vertices $x,y$ are in distance $3$ and two vertices $x,z$ are in distance $3$, then two vertices $y,z$ are not in distance $3$. This shows that, we can use color $1$ for $x$ and color $2$ for $y,z$. Therefore $\chi_{ei}(T)\le 2$. On the other hand,
if $diam(T)\ge 3$, then $\chi_{ei}(T)\ge 2$. Therefore, if $diam(T)\ge 3$, then $\chi_{ei}(T)=2$.

If $\chi_{ei}(T)= 2$, there is two vertices $v,w$ in $T$ so that the path $vxyw$ is of length $3$ in $T$.
This shows that $diam(T)\ge 3$.
\end{proof}

For graphs $G$ and $H$, let $G\cup H$ be the disjoint union of $G$ and $H$. Then it is easy to see that $\chi_{ei}(G\cup H) = max\{\chi_{ei}(G),
\chi_{ei}(H)\}$.\\
For the join of two graphs, we have the following.

\begin{proposition}\label{prop vest}
Let $G$ and $H$ be two graphs of order $m$ and $n$ respectively, with the property that, $E(G)$ and $E(H)$ are non-empty sets. Then  $\chi_{ei}(G\vee H)=m+n$
\end{proposition}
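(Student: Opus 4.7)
The plan is to show that in $G \vee H$, any two distinct vertices are the endpoints of some path $P_4$; then every pair must receive distinct colors under any $e$-injective coloring, forcing $\chi_{ei}(G\vee H) \geq m+n$, while the trivial upper bound (assign every vertex its own color) gives $\chi_{ei}(G\vee H) \leq m+n$.

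To carry this out, let $u,v$ be two arbitrary distinct vertices of $G \vee H$ and split into three cases. If $u,v \in V(G)$, pick any edge $xy \in E(H)$ (which exists by hypothesis); then $u$--$x$--$y$--$v$ is a $P_4$ because each of $ux$, $xy$, $yv$ is either a join-edge or an edge of $H$, and the four vertices are distinct since $u,v \in V(G)$ and $x,y \in V(H)$. The case $u,v \in V(H)$ is symmetric, using an edge of $G$. The mixed case $u \in V(G)$, $v \in V(H)$ is where one must be slightly more careful: here I would choose $w \in V(H)\setminus\{v\}$ and $w' \in V(G)\setminus\{u\}$, possible because $E(G)$ and $E(H)$ nonempty forces $|V(G)|,|V(H)| \geq 2$, and then observe that $u$--$w$--$w'$--$v$ is a $P_4$ with four distinct vertices (since $V(G)$ and $V(H)$ are disjoint and the avoidances $w\neq v$, $w'\neq u$ were built in).

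Having established that every pair of vertices is joined by a $P_4$, the definition of $e$-injective coloring immediately forces all $m+n$ vertices to receive pairwise distinct colors, so $\chi_{ei}(G\vee H) \geq m+n$. Combined with the trivial upper bound, equality follows. The only place where any real checking occurs is the mixed case, and that is just a matter of ensuring enough vertices are available in each side to avoid collisions; the hypothesis that both edge sets are nonempty (equivalently, $m,n \geq 2$) is exactly what guarantees this, and is also necessary, as the examples $\chi_{ei}(K_{1,n})=1$ cited earlier in the paper show that dropping the nonempty-edge-set hypothesis breaks the conclusion.
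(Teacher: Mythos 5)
Your proposal is correct and follows essentially the same strategy as the paper: show that every pair of distinct vertices of $G\vee H$ are the ends of some $P_4$, so all $m+n$ vertices need distinct colors, and the trivial upper bound gives equality. Your handling of the mixed case $u\in V(G)$, $v\in V(H)$ via the all-join-edge path $u$--$w$--$w'$--$v$ is in fact a small simplification: it collapses the paper's three sub-cases (which depend on whether $u$ or $v$ lies on the chosen edges $v_1w_1\in E(G)$, $v_2w_2\in E(H)$) into one uniform argument needing only $|V(G)|,|V(H)|\ge 2$.
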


\begin{proof}
Let $e_1=v_1w_1\in E(G)$ and $e_2=v_2w_2\in E(H)$ be  two edges. We show that any two vertices $x, y$ in $G\vee H$, there is a path of length $3$, with end vertices $x,y$. For observing the result,
we bring up five positions.

1. For $x, y\in V(G)$, consider the path $xv_2w_2y$ in $G\vee H$.

2. For  $x, y\in V(H)$, consider the path $xv_1w_1y$ in $G\vee H$.

3. For  $x\in V(G)\setminus \{v_1,w_1\}$ and $y\in V(H)\setminus \{v_2,w_2\}$, consider the path $xv_2v_1y$ in $G\vee H$.

4. For $x\in \{v_1,w_1\}$, say $v_1$ and $y\in V(H)\setminus \{v_2,w_2\}$,  consider the path $v_1v_2w_1y$ in $G\vee H$.

5. For $x\in \{v_1,w_1\}$ and $y\in\{v_2,w_2\}$ and without loss of generality, say $x=v_1$ and $y=v_2$, consider the path $v_1w_1w_2v_2$ in $G\vee H$.\\ The other positions are similar.
Therefore, for any two vertices $x,y \in G\vee H$ there is a path of length $3$, with end vertices $x,y$.
Therefore the result is observed.
\end{proof}

Let $G$ be a graph and $B$ be a maximum packing set of $G$. If $v\in V(G)\setminus B$, then there is a vertex $u\in B$ such that
$N(v)\cap N(u)\ne \emptyset$. This shows that, $d(v,u)\le 2$. Thus $B$ is a $2$-distance dominating set. Therefore we have.

\begin{proposition}\label{prop pack}
Let $G$ be a graph of diameter $3$. Then $\chi_{ei}(G)\ge \rho(G)\ge \gamma_{2}(G)$. One can have the equalities.
\end{proposition}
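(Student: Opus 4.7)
The plan is to split the statement into two inequalities and a realizability remark. The second inequality $\rho(G)\ge\gamma_2(G)$ is essentially already in the paragraph immediately preceding the statement: take a maximum packing set $B$; for any $v\notin B$ the maximality of $B$ forces some $u\in B$ with $N[u]\cap N[v]\neq\emptyset$ (else $B\cup\{v\}$ would still be packing), so $d_G(u,v)\le 2$, which makes $B$ a $2$-distance dominating set. Hence $\gamma_2(G)\le|B|=\rho(G)$. I would state this in one sentence and point to the preceding paragraph.

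For the main inequality $\chi_{ei}(G)\ge\rho(G)$, I would again start from a maximum packing set $B$. The key observation is that the packing condition gives $N[u]\cap N[v]=\emptyset$ for distinct $u,v\in B$, and in particular $u\not\sim v$ and they have no common neighbour, so $d_G(u,v)\ge 3$. The hypothesis $\mathrm{diam}(G)=3$ then pins this down to $d_G(u,v)=3$ exactly. A shortest $u$–$v$ path therefore has length $3$, i.e.\ it is a $P_4$ with $u,v$ as its endpoints. By the definition of $e$-injective coloring, any such coloring must assign distinct colors to $u$ and $v$. Applying this to every pair of vertices in $B$ shows that the restriction of any $e$-injective coloring to $B$ is injective, whence $\chi_{ei}(G)\ge|B|=\rho(G)$.

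For the realizability comment ``One can have the equalities'', I would exhibit a small witness, for example $C_6$. Its diameter is $3$; the pair of antipodal vertices is a packing set (they are at distance $3$), and no single vertex $2$-distance dominates $C_6$, so $\rho(C_6)=\gamma_2(C_6)=2$. The only $P_4$'s in $C_6$ are the four consecutive-vertex paths, whose endpoints are the three antipodal pairs, and coloring one ``half'' of the cycle with color $1$ and the other with color $2$ is an $e$-injective $2$-coloring; hence $\chi_{ei}(C_6)=2$ as well, and all three quantities coincide.

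The only substantive step is the ``$d(u,v)\ge 3$ forces $d(u,v)=3$'' move; everything else is bookkeeping. The diameter hypothesis is used essentially here and cannot be dropped, since without it two packed vertices could sit at distance $\ge 4$ and need not be linked by a $P_4$.
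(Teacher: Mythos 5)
Your proposal is correct and follows essentially the same route as the paper: a maximum packing set has its vertices pairwise at distance exactly $3$ (packing gives $\ge 3$, the diameter hypothesis gives $\le 3$), so each pair is joined by a $P_4$ and must receive distinct colors, giving $\chi_{ei}(G)\ge\rho(G)$, with $\rho(G)\ge\gamma_2(G)$ from the maximality argument and $C_6$ (the paper also cites $C_8$) as the equality witness. The only blemish is cosmetic: $C_6$ has six, not four, paths $P_4$, though their endpoint pairs are indeed just the three antipodal pairs, so your verification stands.
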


\begin{proof} Let $B$ be maximum packing set of graph $G$. Since two vertices of $B$ has distance $3$, they are assigned with two
distinct  colors. Thus $\chi_{ei}(G)\ge \rho$.\\
For equalities, consider the cycles $C_6$ and $C_8$, (see Propositions \ref{prop-cn}).
\end{proof}

We now give an upper bound on $\chi_{ei}(G)$ that may be  slightly important.

\begin{proposition}\label{Max deg}
 Let $G$ have maximum degree $\Delta$. Then, $\chi_{ei}(G)\le \Delta(\Delta-1)^2+ 1$. This bound is sharp for odd cycle $C_n$
 $(n\ge 5)$.
\end{proposition}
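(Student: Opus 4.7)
The plan is to pass to the three-step graph $S_3(G)$ and apply the greedy chromatic bound. Since the excerpt establishes $\chi_{ei}(G) = \chi(S_3(G))$, it suffices to show that $\Delta(S_3(G)) \le \Delta(\Delta-1)^2$; the standard inequality $\chi(H) \le \Delta(H) + 1$ then finishes the job.

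To estimate the maximum degree of $S_3(G)$, I would fix a vertex $v \in V(G)$ and count its neighbors, which are exactly the vertices $u \ne v$ appearing as the far endpoint of some path $P_4 = vxyu$ in $G$. Rather than count endpoints directly, I count non-backtracking walks of length $3$ starting at $v$: the edge $vx$ has at most $\Delta$ choices; the edge $xy$, required not to return along $vx$, has at most $\Delta - 1$ choices; and the edge $yu$, required not to backtrack along $xy$, again has at most $\Delta - 1$ choices. This yields at most $\Delta(\Delta-1)^2$ such walks, and therefore at most that many distinct endpoints $u$, proving $\deg_{S_3(G)}(v) \le \Delta(\Delta-1)^2$.

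For the sharpness claim on odd cycles $C_n$ with $n \ge 5$, note that $\Delta = 2$ makes the upper bound equal to $3$. Identifying $V(C_n)$ with $\mathbb{Z}_n$, the edges of $S_3(C_n)$ join each $i$ to $i \pm 3 \pmod n$. When $\gcd(3,n) = 1$, the map $i \mapsto i+3$ generates $\mathbb{Z}_n$, so $S_3(C_n)$ is a single cycle of length $n$, which is odd and hence $3$-chromatic. When $\gcd(3,n) = 3$, write $n = 3k$; since $n$ is odd and $n \ge 5$, we have $k$ odd and $k \ge 3$, and $S_3(C_n)$ decomposes into three disjoint copies of $C_k$, again odd cycles needing $3$ colors. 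Either way $\chi_{ei}(C_n) = 3$, matching the bound.

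The only step demanding care is the walk-counting estimate, and even there the content is just the easy observation that a length-$3$ path cannot repeat an edge, so each edge beyond the first loses exactly one choice out of $\Delta$. The sharpness argument reduces to a short parity check once the $\pm 3$ circulant structure of $S_3(C_n)$ is identified.
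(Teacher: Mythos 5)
Your proposal is correct and follows essentially the same route as the paper: both pass to the three-step graph $S_3(G)$, bound $\Delta(S_3(G))\le \Delta(\Delta-1)^2$, and conclude with the $\chi(H)\le\Delta(H)+1$ bound (the paper cites Brooks' theorem but only uses this greedy form). You merely supply details the paper leaves implicit — the non-backtracking walk count for the degree bound, and the $\pm 3$ circulant analysis of $S_3(C_n)$ for sharpness, where the paper instead points to its Proposition 3.2 on $\chi_{ei}(C_n)$; both of your added arguments are sound.
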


\begin{proof} Let $G$ be a graph and $v \in V(S_3(G))=V(G)$. It is well known that there are at most $\Delta(\Delta-1)^2$ vertices in $G$ such that any of them with $v$ form two end vertices of path $P_4$. This shows that $\deg_{S_{3(G)}}(v)\le \Delta(\Delta-1)^2$. On the other hand $\chi_{ei}(G)= \chi(S_3(G))$ and from Brooks Theorem in usual coloring of graphs,
$\chi(S_3(G))\le \Delta(S_3(G))+1\le \Delta(\Delta-1)^2+1$. Therefore $\chi_{ei}(G)\le \Delta(\Delta-1)^2+1$.\\
For seeing the sharpness observe Proposition \ref{prop-cn}.
\end{proof}

Also we want to drive bounds for the $e$-injective coloring of Cartesian product of two graphs $G$, $H$ in terms of $2$-distance coloring of the
of $G$ and $H$. For this we explore a result from \cite{Mojdeh} and a lemma.
\begin{theorem}\emph{(\cite{Mojdeh} Theorem 1)}\label{theo1-moj}
For any graphs $G$ and $H$ with no isolated vertices,
$$(\Delta(G)+1)(\Delta(H)+1)\leq \chi_{2}(G\boxtimes H)\leq \chi_{2}(G)\chi_{2}(H).$$
\end{theorem}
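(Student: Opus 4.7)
The plan is to reduce the theorem to computing the chromatic number of the graph $(G \boxtimes H)^2$ and then to apply two standard facts: that $\chi$ is submultiplicative under strong products, and that clique number lower-bounds chromatic number.

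First I would establish the key identity $(G \boxtimes H)^2 = G^2 \boxtimes H^2$. This rests on the fundamental distance formula for the strong product, namely
$$d_{G \boxtimes H}\bigl((g_1, h_1), (g_2, h_2)\bigr) = \max\{d_G(g_1, g_2),\, d_H(h_1, h_2)\}.$$
Two distinct vertices are within distance $2$ in $G \boxtimes H$ precisely when they are within distance $2$ in \emph{each} coordinate, and unpacking the three types of edge in a strong product shows that this coincides exactly with adjacency in $G^2 \boxtimes H^2$.

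For the upper bound, take optimal proper colorings $c_G$ of $G^2$ with $\chi_2(G)$ colors and $c_H$ of $H^2$ with $\chi_2(H)$ colors, and define $c(g,h) = (c_G(g), c_H(h))$. Checking the three kinds of adjacency in $G^2 \boxtimes H^2$ shows that $c$ is proper, so $\chi_2(G \boxtimes H) = \chi((G \boxtimes H)^2) \le \chi_2(G)\,\chi_2(H)$.

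For the lower bound, I would exhibit a clique of the required size in $(G \boxtimes H)^2$. Pick $v^\ast \in V(G)$ and $u^\ast \in V(H)$ of maximum degree; the no-isolated-vertex hypothesis guarantees $\Delta(G), \Delta(H) \ge 1$, so the closed neighborhoods are nontrivial. The set $N[v^\ast] \times N[u^\ast]$ has cardinality $(\Delta(G)+1)(\Delta(H)+1)$, and for any two of its points $(v_1, u_1), (v_2, u_2)$, routing through $v^\ast$ and $u^\ast$ shows $d_G(v_1,v_2) \le 2$ and $d_H(u_1,u_2) \le 2$, so by the distance identity these two points lie within distance $2$ in $G \boxtimes H$. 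They therefore form a clique in $(G \boxtimes H)^2$, yielding $\chi_2(G \boxtimes H) \ge (\Delta(G)+1)(\Delta(H)+1)$. The only subtle point is the distance identity; once it is in hand, both bounds fall out of standard product-coloring and clique arguments.
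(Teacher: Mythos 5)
Your argument is correct: the distance formula $d_{G\boxtimes H}\bigl((g_1,h_1),(g_2,h_2)\bigr)=\max\{d_G(g_1,g_2),d_H(h_1,h_2)\}$ gives $(G\boxtimes H)^2=G^2\boxtimes H^2$, after which the upper bound is the standard product coloring of a strong product and the lower bound follows from the clique $N[v^\ast]\times N[u^\ast]$ of size $(\Delta(G)+1)(\Delta(H)+1)$ in $(G\boxtimes H)^2$. The paper itself offers no proof of this statement --- it is imported verbatim from the cited reference --- and your argument is essentially the standard one used there, so there is nothing substantive to contrast.
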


\begin{lemma}\label{lem-G*H}
Let $G$ and $H$ be two graphs with no isolated vertices. If  two end vertices of each path $P_4$ in $G$ and $H$ are adjacent or have a common neighbor, then so does $G\boxtimes H$.
\end{lemma}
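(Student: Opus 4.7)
The plan is to exploit the standard metric identity for the strong product, namely $d_{G\boxtimes H}((g,h),(g',h'))=\max\{d_G(g,g'),\,d_H(h,h')\}$. Rephrased through this identity, the hypothesis on each factor is exactly that any two endpoints of a $P_4$ in the factor are at distance at most $2$, and our goal is to establish the same bound on distances between endpoints of any $P_4$ in $G\boxtimes H$.

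First, I would fix an arbitrary $P_4$ in $G\boxtimes H$, say $u_1u_2u_3u_4$ with $u_i=(g_i,h_i)$, and reduce the claim $d_{G\boxtimes H}(u_1,u_4)\le 2$ to the two coordinate-wise claims $d_G(g_1,g_4)\le 2$ and $d_H(h_1,h_4)\le 2$. By the definition of edges in the strong product, each adjacency $u_iu_{i+1}$ projects in $G$ to either equality or an edge, so $g_1g_2g_3g_4$ is a $G$-walk of length at most $3$; symmetrically for $H$.

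The core step is a short case analysis on $d_G(g_1,g_4)$. If this distance is $\le 2$ we are done, so assume it equals $3$. Then every one of the three projected steps must be a true edge (otherwise the walk would have length $<3$), and moreover all four $g_i$ must be pairwise distinct (any coincidence among non-consecutive vertices would yield a $g_1$–$g_4$ walk of length $<3$). Hence $g_1g_2g_3g_4$ is a genuine $P_4$ in $G$, and the hypothesis on $G$ forces $d_G(g_1,g_4)\le 2$, contradicting $d_G(g_1,g_4)=3$. The same argument applied to $H$ gives $d_H(h_1,h_4)\le 2$, and the metric identity closes the proof.

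The main obstacle (really the only one) is justifying that in the contradiction case the projected sequence is a true $P_4$ rather than a degenerate walk; once the distinctness of the $g_i$'s is established from $d_G(g_1,g_4)=3$, everything is automatic. No use of the isolated-vertex hypothesis is needed for this lemma, but that condition is convenient here to keep the strong-product distance formula valid and to match the framework used in the cited Theorem~\ref{theo1-moj}.
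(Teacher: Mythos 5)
Your proof is correct, and it takes a genuinely different route from the paper's. The paper proves the lemma by brute force: it enumerates all $27$ combinatorial types of a $P_4$ in $G\boxtimes H$ (each of the three steps being a move in the first coordinate only, the second only, or both) and exhibits, case by case, either an edge or a common neighbor for the two endpoints. You instead observe that the hypothesis and conclusion are both statements of the form ``endpoints of any $P_4$ are at distance at most $2$,'' project an arbitrary $P_4$ of $G\boxtimes H$ onto each factor to get a walk of length at most $3$, and argue that if the projected endpoints were at distance exactly $3$ the projection would be forced to be a genuine $P_4$ of the factor, contradicting the hypothesis; the identity $d_{G\boxtimes H}((g,h),(g',h'))=\max\{d_G(g,g'),d_H(h,h')\}$ then finishes the argument (together with the trivial remark that the two endpoints of a path are distinct, so distance at most $2$ really does mean ``adjacent or sharing a neighbor''). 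Your key distinctness check is sound: if all three projected steps are edges and $d_G(g_1,g_4)=3$, then $g_1\ne g_3$, $g_2\ne g_4$ and $g_1\ne g_4$ all follow because any coincidence shortcuts the walk. What your approach buys is brevity, immunity to the risk of omitting a case, and an evident generalization to longer paths and larger distance bounds; what the paper's approach buys is complete self-containedness, not even needing the (standard but uncited) distance formula for strong products. You are also right that the no-isolated-vertices hypothesis plays no role in either argument.
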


\begin{proof}
Suppose that the end vertices of each path $P_4$ in graphs $G$ and $H$ are adjacent or have a common neighbor. We would to be show any two end vertices of a path $P_4$ in $G\boxtimes H$ are adjacent or have a common neighbor. For this, we can bring up  the possible paths $P_4$  in graph $G\boxtimes H$.\\

1.1. $(a,u)(a,v)(a,w)(a,t)$;\ \ 1.2. $(a,u)(a,v)(a,w)(b,w)$;\ \ 1.3.  $(a,u)(a,v)(a,w)(b,t)$.

2.1. $(a,u)(a,v)(b,v)(b,w)$;\ \ 2.2. $(a,u)(a,v)(b,v)(c,v)$;\ \ 2.3 $(a,u)(a,v)(b,v)(c,w)$.

3.1. $(a,u)(a,v)(b,w)(b,t)$;\ \ 3.2. $(a,u)(a,v)(b,w)(c,w)$;\ \ 3.3. $(a,u)(a,v)(b,w)(c,t)$.

4.1. $(a,u)(b,u)(b,v)(b,w)$;\ \ 4.2. $(a,u)(b,u)(b,v)(c,v)$;\ \ 4.3. $(a,u)(b,u)(b,v)(c,w)$.

5.1. $(a,u)(b,u)(c,u)(c,v)$;\ \ 5.2. $(a,u)(b,u)(c,u)(d,u)$;\ \ 5.3. $(a,u)(b,u)(c,u)(d,v)$.

6.1. $(a,u)(b,u)(c,v)(c,w)$;\ \ 6.2. $(a,u)(b,u)(c,v)(d,v)$;\ \ 6.3.  $(a,u)(b,u)(c,v)(d,w)$.

7.1.  $(a,u)(b,v)(b,w)(b,t)$;\ \ 7.2. $(a,u)(b,v)(b,w)(c,w)$;\ \ 7.3. $(a,u)(b,v)(b,w)(c,t)$.

8.1. $(a,u)(b,v)(c,v)(c,w)$;\ \ 8.2. $(a,u)(b,v)(c,v)(d,v)$;\ \ 8.3. $(a,u)(b,v)(c,v)(d,w)$.

9.1. $(a,u)(b,v)(c,w)(d,w)$;\ \ 9.2. $(a,u)(b,v)(c,w)(c,t)$;\ \ 9.3. $(a,u)(b,v)(c,w)(d,t)$.\\

Now we observe that, all these paths type $P_4$ are adjacent or have a common neighbor.\\

1.1. Since $uvwt$ is a path $P_4$ in $H$, the vertices $u$ and $t$ are adjacent or have a common neighbor. If $u$ and $t$ are adjacent, then
the vertices $(a,u)$ and $(a,t)$ are adjacent in $G\boxtimes H$. If  $u$ and $t$ have a common neighbor $s$, then $(a,s)$ is a common neighbor of
$(a,u)$ and $(a,t)$.\\
1.2. $(a,v)$ is a common neighbor of $(a,u)$ and $(b,w)$ in $G\boxtimes H$.\\
1.3. The $uvwt$ is a path $P_4$ in $H$. If $u$ and $t$ are adjacent, then
the vertices $(a,u)$ and $(b,t)$ are adjacent in $G\boxtimes H$. If  $u$ and $t$ have a common neighbor $s$, then $(a,s)$ is a common neighbor of
$(a,u)$ and $(b,t)$.

2.1. The vertex $(b,v)$ is a common neighbor of $(a,u)$ and $(b,w)$ in $G\boxtimes H$.\\
2.2. The vertex $(b,v)$ is a common neighbor of $(a,u)$ and $(c,v)$ in $G\boxtimes H$.\\
2.3. The vertex $(b,v)$ is a common neighbor of $(a,u)$ and $(c,w)$ in $G\boxtimes H$.

3.1. Its proof is readily and similar to the proof of 1.3.\\
3.2. The vertex $(b,v)$ is a common neighbor of $(a,u)$ and $(c,w)$ in $G\boxtimes H$.\\
3.3. Its proof is readily, and is similar to the proof of 1.3.

4.1. The vertex $(b,v)$ is a common neighbor of $(a,u)$ and $(b,w)$ in $G\boxtimes H$.\\
4.2. The vertex $(b,v)$ is a common neighbor of $(a,u)$ and $(c,v)$ in $G\boxtimes H$.\\
4.3. The vertex $(b,v)$ is a common neighbor of $(a,u)$ and $(c,w)$ in $G\boxtimes H$.

5.1. The vertex $(b,u)$ is a common neighbor of $(a,u)$ and $(c,v)$ in $G\boxtimes H$.\\
5.2. Since $abcd$ is a path $P_4$ in $G$, the vertices $a$ and $d$ are adjacent or have a common neighbor. If $a$ and $d$ are adjacent, then
the vertices $(a,u)$ and $(d,u)$ are adjacent in $G\boxtimes H$. If  $a$ and $d$ have a common neighbor $r$, then $(r,u)$ is a common neighbor of
$(a,u)$ and $(d,u)$.\\
5.3. Its proof is obvious and it is similar to the proof of 5.2.

6.1. The vertex $(b,v)$ is a common neighbor of $(a,u)$ and $(c,v)$ in $G\boxtimes H$.\\
6.2. Its proof is obvious and it is similar to the proof of 5.2.\\
6.3. Its proof is obvious and it is similar to the proof of 5.2.

7.1. Its proof is  similar to the proof of 1.3.\\
7.2. The vertex $(b,v)$ is a common neighbor of $(a,u)$ and $(c,w)$ in $G\boxtimes H$.\\
7.3. Its proof is  similar to the proof of 1.3.

8.1. The vertex $(b,v)$ is a common neighbor of $(a,u)$ and $(c,w)$ in $G\boxtimes H$.\\
8.2. Its proof is  similar to the proof of 5.2.\\
8.3. Its proof is  similar to the proof of 5.2.

9.1. Its proof is similar to the proof of 5.2.\\
9.2. Its proof is  similar to the proof of 1.3.\\
9.3. There are two paths $abcd$ and $uvwt$ in $G$ and $H$ respectively. If $ad\in E(G)$  and $ut\in E(H)$, then $(a,d)$ and $(u,t)$ are adjacent in
$G\boxtimes H$. If $ad\in E(G)$ and $s$ is a common neighbor of $u$ and $t$, then $(a,s)$ is a common neighbor of $(a,u)$ and $(d,t)$ in
$G\boxtimes H$. If $ut\in E(H)$ and $r$ is a common neighbor of $a$ and $d$, then $(r,u)$ is a common neighbor of $(a,u)$ and $(d,t)$ in
$G\boxtimes H$. If $a$ and $d$ have a common neighbor $r$,  and similarly, $s$ is a common neighbor of $u$ and $t$, then $(r,s)$ is a common
neighbor of $(a,u)$ and $(d,t)$ in $G\boxtimes H$.\\
It is observed that, both end vertices of every path $P_4$ in $G\boxtimes H$ are adjacent or have a common neighbor. Therefore the proof is complete.
\end{proof}

Now we have the following.
\begin{theorem} \label{theo-G*H}
	For any graphs $G$ and $H$  with no isolated vertices, with the property that, any two end vertices of each path $P_4$ in $G$ and $H$ are adjacent or have a common neighbor, we have
$$ Max\{\chi_{ei}(G), \chi_{ei}(H)\} \leq \chi_{ei}(G\square H)\leq  \chi_{2}(G)\chi_{2}(H).$$ The bounds are sharp.
\end{theorem}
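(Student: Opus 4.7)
The plan is to establish the two inequalities separately, using the preceding Lemma \ref{lem-G*H} and Theorem \ref{theo1-moj} as the main engines for the upper bound.

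For the lower bound, I would use the "fiber" structure of the Cartesian product. For any fixed $h_0\in V(H)$, the set $V(G)\times\{h_0\}$ induces in $G\square H$ a subgraph isomorphic to $G$, so every path $P_4$ in $G$ appears verbatim as a $P_4$ of $G\square H$. Consequently, the restriction of any optimal $e$-injective coloring of $G\square H$ to this fiber is a valid $e$-injective coloring of $G$, giving $\chi_{ei}(G)\le \chi_{ei}(G\square H)$. The symmetric argument with a fiber $\{g_0\}\times V(H)$ gives $\chi_{ei}(H)\le \chi_{ei}(G\square H)$, and the maximum bound follows.

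For the upper bound, I would interpose the strong product $G\boxtimes H$ and chain three inequalities. First, since $G\square H$ is a spanning subgraph of $G\boxtimes H$ (that is, $E(G\square H)\subseteq E(G\boxtimes H)$ on the common vertex set $V(G)\times V(H)$), every $P_4$ in $G\square H$ is also a $P_4$ in $G\boxtimes H$. Hence any $e$-injective coloring of $G\boxtimes H$, read on the same vertices, is already an $e$-injective coloring of $G\square H$, yielding $\chi_{ei}(G\square H)\le \chi_{ei}(G\boxtimes H)$. Second, by Lemma \ref{lem-G*H} the hypothesis on $G$ and $H$ is inherited by $G\boxtimes H$: any two end vertices of each $P_4$ in $G\boxtimes H$ are adjacent or share a common neighbor. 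Applying the converse part of Observation \ref{obs-e-dis} to $G\boxtimes H$ then gives $\chi_{ei}(G\boxtimes H)\le \chi_{2}(G\boxtimes H)$. Third, Theorem \ref{theo1-moj} supplies $\chi_{2}(G\boxtimes H)\le \chi_{2}(G)\chi_{2}(H)$. Composing these three bounds yields $\chi_{ei}(G\square H)\le \chi_{2}(G)\chi_{2}(H)$.

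The main subtlety is really the middle step: it must be verified that Lemma \ref{lem-G*H} genuinely applies to $G\boxtimes H$ under the given hypothesis, so that Observation \ref{obs-e-dis} can be invoked; the other two links in the chain are straightforward from the definitions of the products and from the cited result. For sharpness, I would display small examples where both inequalities are simultaneously attained, for instance products of short paths or short cycles for which $\chi_{ei}$ coincides with $\chi_2$ on each factor, so that the lower max and the upper product pinch $\chi_{ei}(G\square H)$ from both sides.
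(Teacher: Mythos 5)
Your proposal is correct and follows essentially the same route as the paper: the lower bound via copies of $G$ and $H$ inside $G\square H$ (your fiber phrasing is just a cleaner statement of the paper's argument), and the upper bound via the chain $\chi_{ei}(G\square H)\le\chi_{ei}(G\boxtimes H)\le\chi_2(G\boxtimes H)\le\chi_2(G)\chi_2(H)$ using Lemma \ref{lem-G*H}, Observation \ref{obs-e-dis} and Theorem \ref{theo1-moj}. The paper's sharpness witnesses are $P_m\square P_n$ for the lower bound and $C_3\square C_5$, $C_3\square C_7$ for the upper bound, matching your suggestion.
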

\begin{proof}
For the first inequality, since $G$ and $H$ have no isolated vertices, and any path of length $3$ of $G$ and $H$ gives at least one path of length $3$ of $G\square H$, thus the
first inequality holds. For seeing the sharpness, consider $G=P_m$ and $H=P_n$ where $m\ge 4$ or $n\ge 4$.

We now prove the second inequality. From  the definitions of Cartesian and strong products, we may have $G\square H$ as a subgraph of  $G\boxtimes H$, and next any path $P_4$ of $(G\square H)$ is a path $P_4$ of $(G\boxtimes H)$.
Therefore, $\chi_{ei}(G\square H)\leq \chi_{ei}(G\boxtimes H)$. As the same way, $\chi_{2}(G\square H)\leq \chi_{2}(G\boxtimes H)$.
From Observation \ref{obs-e-dis} and Lemma \ref{lem-G*H} $\chi_{ei}(G\boxtimes H) \le \chi_{2}(G\boxtimes H)$.
On the other hand, from Theorem \ref{theo1-moj}, $\chi_{2}(G\boxtimes H)\leq \chi_{2}(G)\chi_{2}(H)$. These deduce  that $\chi_{ei}(G\square H)\leq \chi_{2}(G)\chi_{2}(H)$. It is easy to see that, this bound is sharp for $G=C_3$ and $H=C_5$ and also $G=C_3$ and $H=C_7$,
(see Theorem \ref{theo-C*C*}).
\end{proof}

\section{$e$-injective chromatic number of some graphs}
In this section we investigate the $e$-injective coloring of some custom graphs.
Now we determine exact value for the $e$-injective chromatic number for some special families of graphs, such as path, cycle, complete graphs, wheel graphs, star,  complete bipartite graphs, $k$-regular bipartite graphs, multipartite graphs,  fan graphs,  prism graphs, ladder graphs and hypercube graphs, double star and trees.

\begin{proposition}\label{prop-pn}
For Path $P_{n}$, we have
\end{proposition}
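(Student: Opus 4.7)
The statement presumably asserts that $\chi_{ei}(P_n) = 1$ when $n \le 3$ and $\chi_{ei}(P_n) = 2$ when $n \ge 4$. My plan is to give both a one-line derivation from the tree result and, for concreteness, a direct exhibition of an $e$-injective $2$-coloring.

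The fastest route is to appeal to Proposition \ref{prop equ-T}: since $P_n$ is a tree, I only need to compute its diameter. For $n \le 3$ we have $\mathrm{diam}(P_n) \le 2$, so part 1 of that proposition gives $\chi_{ei}(P_n) = 1$; for $n \ge 4$ we have $\mathrm{diam}(P_n) = n-1 \ge 3$, so part 2 gives $\chi_{ei}(P_n) = 2$. In principle this closes the proof completely.

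For a self-contained direct argument (useful if the authors want one before invoking the tree proposition), I would write $V(P_n) = \{v_1,\ldots,v_n\}$ with $v_i v_{i+1} \in E$. For $n \le 3$ the graph contains no copy of $P_4$, so the constant function $f \equiv 1$ is trivially $e$-injective and $\chi_{ei}(P_n) = 1$. For $n \ge 4$, the lower bound $\chi_{ei}(P_n) \ge 2$ is immediate because $v_1$ and $v_4$ are the ends of a $P_4$ and must receive different colors. For the matching upper bound, I would exhibit the periodic coloring with period six
\[
f(v_i) \;=\; \begin{cases} 1, & i \bmod 6 \in \{1,2,3\}, \\ 2, & i \bmod 6 \in \{4,5,0\}, \end{cases}
\]
and verify directly that $f(v_i) \ne f(v_{i+3})$ for every $1 \le i \le n-3$, since consecutive blocks of three coordinates alternate colors. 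This is the only condition needed, because every $P_4$ in $P_n$ is of the form $v_i v_{i+1} v_{i+2} v_{i+3}$, so its endpoints are exactly the pairs $(v_i, v_{i+3})$.

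There is essentially no obstacle in this proof; the only thing to be careful about is noticing that a naive two-periodic coloring $1,2,1,2,\ldots$ does \emph{not} work (it gives $f(v_i) = f(v_{i+3})$ when $3$ and the period are not coprime issues — actually here it fails trivially because $1$ and $4$ get the same color), so one must use a period divisible by something ensuring $c_i \ne c_{i+3}$; the block pattern $1,1,1,2,2,2$ is the simplest choice.
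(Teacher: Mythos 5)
Your proposal proves the correct statement ($\chi_{ei}(P_n)=1$ for $n\le 3$ and $\chi_{ei}(P_n)=2$ for $n\ge 4$), and both of your routes are sound: the reduction to Proposition \ref{prop equ-T} is legitimate (that result precedes this one and does not depend on it), and your period-six block coloring $1,1,1,2,2,2,\ldots$ does satisfy $f(v_i)\ne f(v_{i+3})$ for all $i$, which is the only constraint since every $P_4$ in $P_n$ is a segment $v_iv_{i+1}v_{i+2}v_{i+3}$.

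However, your closing remark is wrong, and it is worth flagging because the coloring you dismiss is exactly the one the paper uses. You claim the alternating coloring $1,2,1,2,\ldots$ "fails trivially because $1$ and $4$ get the same color." They do not: under $f(v_i)=1$ for $i$ odd and $f(v_i)=2$ for $i$ even, we get $f(v_1)=1$ and $f(v_4)=2$. More generally $i$ and $i+3$ differ by an odd number, hence have opposite parity, so the parity coloring automatically separates the endpoints of every $P_4$. The paper's proof is precisely this one-line observation. Your block coloring is correct but needlessly elaborate, and the stated justification for preferring it is a miscalculation rather than a real obstruction. The proof as a whole stands; only that incidental claim should be deleted or corrected.
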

$$\chi_{ei}(P_n)=\begin{cases}
 1 &\ \  n\leq3 \\
 2 &\  \ n>3 \quad\\
 \end{cases}.$$
\begin{proof}  Let $n\leq3$. It is obvious that $\chi_{ei}(P_n)=1$.
    \\ Let $n>3$. We assign the color $1$ to the odd vertices and  color $2$ to the even vertices. These assignment shows that, every pair of vertices which there exists a path of length $3$ between them, receive distinct colors. Therefore $\chi_{ei}(P_{n})=2$.
\end{proof}

\begin{proposition}\label{prop-cn}
For cycle $C_{n}$, we have
$$\chi_{ei}(C_n)=\begin{cases}
 1 & n=3 \\
 2 & n\ge 4,\ 2|n \\
 3 & n\ge 4,\ 2\nmid n \\
 \end{cases}.$$
 \end{proposition}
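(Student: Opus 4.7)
The plan is to reduce to computing $\chi(S_3(C_n))$ using the identity $\chi_{ei}(G)=\chi(S_3(G))$ from Definition 1.2. Since any path in $C_n$ must follow the cycle, the only $P_4$'s are the sub-paths of three consecutive edges. Consequently $v_i$ and $v_j$ are adjacent in $S_3(C_n)$ iff $j\equiv i\pm 3\pmod n$, so $S_3(C_n)$ is the circulant graph on $\mathbb{Z}_n$ with connection set $\{\pm 3\}$. This reduction is the key structural step; once it is made, the rest is a case analysis on $n$.

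For $n=3$ there is no $P_4$ at all (a $P_4$ requires four distinct vertices), so $S_3(C_3)$ is edgeless and $\chi_{ei}(C_3)=1$. For every $n\ge 4$ the path $v_1v_2v_3v_4$ witnesses $\chi_{ei}(C_n)\ge 2$.

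For even $n\ge 4$ the parity coloring $f(v_i)\equiv i\pmod 2$ is proper on $S_3(C_n)$ because $i$ and $i\pm 3$ have opposite parity, so $\chi_{ei}(C_n)=2$. For odd $n\ge 5$ I would first produce an upper bound of $3$: $S_3(C_n)$ is either a single cycle (when $\gcd(3,n)=1$) or a disjoint union of three cycles of length $n/3$ (when $\gcd(3,n)=3$), and in either case it is a union of cycles, hence $3$-colorable. To get $\chi_{ei}(C_n)\ge 3$, I would exhibit an odd cycle inside $S_3(C_n)$: if $\gcd(3,n)=1$ the circulant itself is an odd cycle of length $n$, while if $n=3k$ with $k$ odd and $k\ge 3$ each component is $C_k$, an odd cycle. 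Thus $S_3(C_n)$ is not bipartite and $\chi_{ei}(C_n)=3$.

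The main obstacle is the odd-$n$ lower bound, which requires confirming that for every odd $n\ge 5$ at least one component of the circulant $S_3(C_n)$ is an odd cycle; this is handled cleanly by splitting on whether $3\mid n$, using that $n$ odd forces $n/\gcd(3,n)$ odd. Everything else—the reduction to $S_3(C_n)$, the explicit $2$-coloring in the even case, and the $3$-colorability of a disjoint union of cycles—is routine.
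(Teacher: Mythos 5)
Your proof is correct, and for the odd case it takes a genuinely different route from the paper. The paper handles $n=3$ and even $n$ exactly as you do (the same parity coloring), but for odd $n$ it writes down an explicit $3$-coloring (treating $n=5$ and $n\ge 7$ separately, colouring most of the cycle alternately with $1,2$ and patching the last three vertices with colour $3$) and then argues the lower bound $\chi_{ei}(C_n)\ge 3$ somewhat informally by pointing at a few specific length-$3$ paths among $v_{n-2},v_{n-1},v_n$ and their partners. Your approach instead identifies $S_3(C_n)$ as the circulant on $\mathbb{Z}_n$ with connection set $\{\pm 3\}$, i.e.\ a disjoint union of cycles of length $n/\gcd(3,n)$, and reads off both bounds at once: $3$-colourability because every union of cycles is $3$-colourable, and the lower bound because for odd $n\ge 5$ at least one component is an odd cycle, so $S_3(C_n)$ is not bipartite. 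This buys you a fully rigorous lower bound (the paper's ``it is clear that one cannot use two colours'' is the weakest step of its argument) and a uniform treatment of all odd $n$ with no case split between $n=5$ and $n\ge 7$; the price is that you must verify the component structure of the circulant, including the degenerate reading of ``cycles of length $n/3$'' when $3\mid n$, which you correctly restrict to odd $n\ge 9$ so that $n/3\ge 3$.
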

\begin{proof}
 Let $n=3$. It is obvious that $\chi_{ei}(C_3)=1$. \\
 Let $n>3$. There are two cases to be considered.
 
     \textbf{Case $1$}. If $n$ is even.\\
  We assign the color $1$ to the odd vertices and  color $2$ to the even vertices. Therefore $\chi_{ei}(C_{2k})=2$.
  
     \textbf{Case $2$}. If $n$ is odd.\\
      Let $n=5$. We assign the color $1$ to the vertices $v_{1}, v_{2}$ and color $2$ to the vertices $v_{3},v_{4}$ and we assign color $3$ to the vertex $v_{5}$. This assignments is an $e$-injective coloring of $C_{5}$.\\
      Let $n\geq 7$  . We assign the color $1$ to the odd vertices $v_i$s,  for $i\le n-4$ and color $2$ to the even vertices $v_i$s,  for $i\le n-3$ and we assign color $3$ to the vertices $v_{n-2}, v_{n-1}, v_n$. This assignments is an $e$-injective coloring of $C_n$ for odd $n\geq 7 $. On the other hand,
since there are two paths of length $3$ between $v_{n-2}$ with two vertices $v_{1}$, $v_{n-5}$, as well as $v_{n-1}$ with two vertices $v_{2}$, $v_{n-4}$ and also $v_{n}$ with two vertices $v_{3}$, $v_{n-3}$. It is clear that, one cannot $e$-injective color to the vertices cycle $C_n$ withe two colors for odd $n$. Therefore the result holds.
\end{proof}

Since for $n\ge 4$, any two vertices are end of a path $P_4$, then we have.
\begin{observation}\label{obs-kn}
For complete graph $K_{n}$, we have
$$\chi_{ei}(K_n)=\begin{cases}
 1 &\  \ n\leq3 \\
 n &\  \ n>3 \quad \\
 \end{cases}.$$
 \end{observation}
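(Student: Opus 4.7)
The plan is to split on $n$ exactly as the statement does. For the trivial range $n\le 3$, I would simply note that $K_1$, $K_2$, $K_3$ contain no path on four vertices at all (their longest paths have $1$, $2$, $3$ vertices respectively), so the defining condition of an $e$-injective coloring is vacuous and the constant coloring $f\equiv 1$ works, giving $\chi_{ei}(K_n)=1$.

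For $n\ge 4$, the strategy is to combine Observation~\ref{obs-e-uso} with the trivial upper bound. First I would verify the hypothesis of Observation~\ref{obs-e-uso}: given any two (adjacent) vertices $u,v\in V(K_n)$, since $n\ge 4$ we can pick two further distinct vertices $x,y\in V(K_n)\setminus\{u,v\}$, and because $K_n$ is complete the sequence $uxyv$ is a genuine $P_4$ with $u,v$ as its endpoints. Thus every pair of adjacent vertices bounds a $P_4$, and Observation~\ref{obs-e-uso} yields $\chi(K_n)\le \chi_{ei}(K_n)$; since $\chi(K_n)=n$, this gives the lower bound $n\le \chi_{ei}(K_n)$.

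The matching upper bound is immediate: assigning the $n$ distinct colors $1,2,\ldots,n$ to the $n$ vertices of $K_n$ is trivially an $e$-injective coloring (no two vertices share a color at all), so $\chi_{ei}(K_n)\le n$. Combining the two inequalities gives $\chi_{ei}(K_n)=n$ for $n\ge 4$.

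I do not expect any real obstacle here; the only point that requires a moment of care is distinguishing $n=3$ from $n\ge 4$, because $K_3$ does satisfy $\chi(K_3)=3$ but has no $P_4$, so the hypothesis of Observation~\ref{obs-e-uso} fails and the lower-bound argument correctly breaks down, matching the fact that $\chi_{ei}(K_3)=1<3=\chi(K_3)$ (already noted at the start of Section~2).
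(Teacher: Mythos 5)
Your proposal is correct and matches the paper's (very brief) justification: the paper simply notes that for $n\ge 4$ any two vertices of $K_n$ are the ends of a $P_4$, which is exactly the fact you verify by choosing two auxiliary vertices $x,y$, and the trivial upper bound $\chi_{ei}(K_n)\le n$ is implicit. Your routing of the lower bound through Observation~\ref{obs-e-uso} is just a formalization of the same argument, and your handling of the $n\le 3$ cases agrees with the paper.
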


\begin{proposition}\label{prop-wn}
For wheel graph $W_{n} (n\ge 3),\  \chi_{ei}(W_n)=n+1$.
\end{proposition}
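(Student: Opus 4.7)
The plan is to show that the three-step graph $S_{3}(W_{n})$ is the complete graph $K_{n+1}$, which immediately yields $\chi_{ei}(W_{n}) = \chi(K_{n+1}) = n+1$ via the identity $\chi_{ei}(G) = \chi(S_{3}(G))$ established in the preliminaries. Concretely, label the hub by $v_{0}$ and the rim by $v_{1}, v_{2}, \ldots, v_{n}$ with subscripts taken modulo $n$, so that $v_{0}$ is adjacent to every $v_{i}$ and $v_{i}v_{i+1}$ is an edge for each $i$. I need to exhibit, for every pair of distinct vertices $x,y \in V(W_{n})$, a path $P_{4}=xaby$ in $W_{n}$.

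First I would handle the hub-to-rim case. For any rim vertex $v_{i}$, consider the path
\[
v_{0}\, v_{i+2}\, v_{i+1}\, v_{i}.
\]
This uses the hub-edge $v_{0}v_{i+2}$ and the two rim-edges $v_{i+2}v_{i+1}$ and $v_{i+1}v_{i}$, and the four vertices are distinct as soon as $n \ge 3$ (so that $v_{i+2} \ne v_{i}$). Next I would handle the rim-to-rim case. Given distinct cycle vertices $v_{i}$ and $v_{j}$, pick a cycle-neighbor $v_{k}$ of $v_{j}$ (so $k \in \{j-1,j+1\}$) with $v_{k}\ne v_{i}$; such a choice is always possible because $v_{i}$ cannot coincide with both of the two cycle-neighbors of $v_{j}$ when $n\ge 3$. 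Then
\[
v_{i}\, v_{0}\, v_{k}\, v_{j}
\]
is a $P_{4}$ with endpoints $v_{i}$ and $v_{j}$, using the two hub-edges $v_{i}v_{0}$ and $v_{0}v_{k}$ together with the rim-edge $v_{k}v_{j}$.

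Having produced a $P_{4}$ joining every pair of distinct vertices of $W_{n}$, I conclude that in $S_{3}(W_{n})$ every pair of vertices is adjacent, i.e., $S_{3}(W_{n})\cong K_{n+1}$. Therefore
\[
\chi_{ei}(W_{n}) \;=\; \chi\bigl(S_{3}(W_{n})\bigr) \;=\; \chi(K_{n+1}) \;=\; n+1,
\]
which is the claimed equality. The only genuinely delicate point is the boundary behavior at $n=3$, where $W_{3}\cong K_{4}$ and the rim ``triangle'' forces $v_{i+2}=v_{i-1}$; here one should double-check that the paths written above remain simple (they do, since the only constraint that could fail is $v_{i+2}=v_{i}$, which needs $n=2$). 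Everything else is a straightforward verification, so I expect no substantial obstacle beyond keeping the modular-arithmetic bookkeeping consistent in the two cases.
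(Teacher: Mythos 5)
Your proof is correct and follows essentially the same route as the paper: both arguments exhibit a path $P_4$ between every pair of distinct vertices of $W_n$ (splitting into hub--rim and rim--rim cases), so that all $n+1$ vertices must receive distinct colors. Your case analysis is in fact a bit cleaner than the paper's, but the underlying idea is identical.
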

\begin{proof}
Let $v_{1}$ be a universal vertex. For $i,j\ge 2$, there exists a path $v_jv_{j+1}v_1v_i$ between $v_i$ and $v_j$ if $v_i\ne v_{j+1}$; and
there exists a path $v_jv_{1}v_{j+2}v_i$ between $v_i$ and $v_j$ if $v_i= v_{j+1}$. On the other hand, there exists a path $v_1v_{i-2}v_{i-1}v_i$
between $v_1$ and $v_i$.
Taking this account, there exist a path of length $3$  between two vertices in $W_{n}$. Therefore $\chi_{ei}(W_{n})=n+1$.
\end{proof}

\begin{proposition}\label{prop-k{n,m}}
For complete bipartite graph $K_{n,m}$ with $m,n \ge 2$, $\chi_{ei}(K_{n,m})=2$.
\end{proposition}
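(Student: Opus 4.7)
The plan is to exploit the bipartite structure of $K_{n,m}$ to control exactly where the endpoints of a $P_4$ can land. Write $V(K_{n,m}) = A \cup B$ with $|A|=n$, $|B|=m$, and recall that every edge goes between $A$ and $B$. Since vertices along any path must alternate between the two partite sets, the two endpoints of any $P_4 = uxyv$ lie in opposite partite sets (if $u \in A$ then $x \in B$, $y \in A$, $v \in B$, and symmetrically if $u \in B$).

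For the upper bound $\chi_{ei}(K_{n,m}) \le 2$, I would define the coloring $f \colon V \to \{1,2\}$ by $f(a)=1$ for every $a \in A$ and $f(b)=2$ for every $b \in B$. By the observation above, the endpoints of any $P_4$ receive different colors, so $f$ is an $e$-injective coloring.

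For the lower bound $\chi_{ei}(K_{n,m}) \ge 2$, I need to exhibit at least one $P_4$; this is exactly where the hypothesis $m,n\ge 2$ is used. Pick distinct $a_1,a_2 \in A$ and distinct $b_1,b_2 \in B$, and observe that $a_1 b_1 a_2 b_2$ is a $P_4$ in $K_{n,m}$, forcing $f(a_1)\neq f(b_2)$ in any $e$-injective coloring.

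Neither step is an obstacle; the only subtlety is making sure $m,n\ge 2$ is invoked (otherwise $K_{1,m}$ is a star and $\chi_{ei}=1$, as already noted in the paper). Combining the two bounds gives $\chi_{ei}(K_{n,m})=2$.
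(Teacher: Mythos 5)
Your proof is correct and follows essentially the same route as the paper: color each partite set with a single color, using the fact that the endpoints of any $P_4$ in a bipartite graph lie in opposite partite sets, and exhibit one $P_4$ (which requires $m,n\ge 2$) for the lower bound. Your write-up is in fact slightly more explicit than the paper's about the parity argument that justifies giving same-part vertices the same color.
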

\begin{proof}
Let $n\geq2,\ m\geq2$. It is easy to observe that, there is a path of length $3$ between two vertices of two  different partite sets. Therefore one can assign color $1$ to one partite set and $2$ to another partite set. Thus, $\chi_{ei}(K_{n,m})=2$.
\end{proof}
Using the proof of  proposition \ref{prop-k{n,m}}, For regular bipartite graphs, we have the next result, which proof is similar.

\begin{proposition}\label{prop-kr}
For $k$-regular bipartite graph  $G$ ($k\ge 2$),
$\chi_{ei}(G)=2$
 \end{proposition}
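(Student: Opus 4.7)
The plan is to mirror the proof of Proposition \ref{prop-k{n,m}}, but with the key observation that the argument for the upper bound only needs the bipartite structure of $G$, not the completeness. So I would split the proof into an upper bound and a matching lower bound.

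First, for the upper bound $\chi_{ei}(G) \le 2$: let $A$ and $B$ be the two partite sets of $G$. Assign color $1$ to every vertex of $A$ and color $2$ to every vertex of $B$. The point is that any path $P_4 = uxyv$ in a bipartite graph has its vertices alternating between $A$ and $B$, so the two end vertices $u$ and $v$ lie in different partite sets and thus receive different colors. Hence this is an $e$-injective $2$-coloring, and $\chi_{ei}(G) \le 2$.

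Second, for the lower bound $\chi_{ei}(G) \ge 2$: I need to exhibit at least one $P_4$ in $G$, which forces $\chi_{ei}(G) \neq 1$. This is where the assumption $k \ge 2$ enters. Pick any vertex $u \in A$; since $\deg(u) = k \ge 2$, it has a neighbor $x \in B$. Now $\deg(x) = k \ge 2$, so $x$ has a neighbor $y \in A$ with $y \ne u$. Finally, $\deg(y) = k \ge 2$, so $y$ has a neighbor $v \in B$ with $v \ne x$. The four vertices $u, x, y, v$ are pairwise distinct (the pairs from $A$ and from $B$ are distinct by construction, and vertices from different parts are automatically distinct), so $uxyv$ is a path of length $3$ in $G$. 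Therefore $\chi_{ei}(G) \ge 2$, and combining the two bounds gives equality.

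The only subtlety worth flagging is the distinctness of the four vertices in the $P_4$, which is where $k \ge 2$ is used essentially; if $k = 1$ the graph is a perfect matching and contains no $P_4$, so the statement would fail. No real obstacle beyond this routine verification.
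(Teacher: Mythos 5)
Your proposal is correct and follows essentially the same route as the paper, which simply colors the two partite sets with colors $1$ and $2$ (citing the proof of the complete bipartite case). Your explicit verification of the lower bound --- constructing a $P_4$ greedily using $k\ge 2$ at each step and checking the four vertices are distinct --- is a detail the paper leaves implicit, but it is the natural completion of the same argument rather than a different approach.
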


A complete $r$ partite graph is a simple graph such that the vertices are partitioned to $r$ independent vertex sets
and every pair of vertices are adjacent if and only if they belong to different partite sets.
\begin{proposition}\label{prop-rmul}
Let $G=K_{n_1,...,n_r}$ be a complete $(r\ge 3)$ partite graph of order $n$. Then

$$\chi_{ei}(G)=\begin{cases}
 1 &\  \ r=3\ \emph{and}\ G=K_{1,1,1}\\
 n-1 &\  \ r=3\ \emph{and}\ G\in \{K_{n-2,1,1}, K_{{1},n-2,1}, K_{{1},1, n-2}\}\ \emph{with}\ n\ge 4\\
 n &\  \ ow
 \end{cases}.$$
\end{proposition}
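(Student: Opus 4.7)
The plan is to use the identity $\chi_{ei}(G)=\chi(S_{3}(G))$ and determine $S_{3}(G)$ exactly in each case by checking, for every pair of vertices of $G$, whether they are the endpoints of some $P_{4}$. Write the partite classes as $A_{1},\dots,A_{r}$ with $|A_{i}|=n_{i}$. The trivial case $G=K_{1,1,1}=K_{3}$ has only three vertices and therefore contains no $P_{4}$ at all, so $S_{3}(G)$ is edgeless and $\chi_{ei}(G)=1$.

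For the generic analysis I separate two configurations of a pair $u,v$. If $u,v$ lie in the same part $A_{i}$, then since $r\geq 3$ there exist two distinct indices $j,k\neq i$, and for any $x\in A_{j}$, $y\in A_{k}$ the sequence $uxyv$ is a $P_{4}$ in $G$; hence every such pair is an edge of $S_{3}(G)$. If instead $u\in A_{i}$ and $v\in A_{j}$ with $i\neq j$, the internal vertices of a candidate $P_{4}=uxyv$ must satisfy $x\notin A_{i}$, $y\notin A_{j}$, and $x,y$ in distinct parts. When $r\geq 4$, choosing $x\in A_{k}$, $y\in A_{\ell}$ with $\{k,\ell\}$ disjoint from $\{i,j\}$ always produces such a $P_{4}$. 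When $r=3$, writing $A_{k}$ for the third part, the viable choices reduce to $x\in A_{k}$ with $y\in A_{i}\setminus\{u\}$ (available iff $n_{i}\geq 2$) or $x\in A_{j}\setminus\{v\}$ with $y\in A_{k}$ (available iff $n_{j}\geq 2$); hence a $P_{4}$ between $u$ and $v$ exists unless $r=3$ and $n_{i}=n_{j}=1$.

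These observations yield the conclusion. If $r\geq 4$, or if $r=3$ with at most one singleton class, then every pair of vertices is an edge of $S_{3}(G)$, whence $S_{3}(G)=K_{n}$ and $\chi_{ei}(G)=n$. The only other nontrivial configuration is $r=3$ with two singleton classes $\{b\}$ and $\{c\}$ and $|A_{3}|=n-2\geq 2$; the preceding paragraph implies that the only edge that can be missing from $S_{3}(G)$ is $bc$, and I verify directly that no $P_{4}$ joins $b$ to $c$, since every $P_{4}$ starting at $b$ has the form $b\,a_{i}\,c\,a_{j}$ with $a_{i},a_{j}\in A_{3}$ and thus terminates in $A_{3}$. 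Consequently $S_{3}(G)=K_{n}-bc$, and $\chi(K_{n}-bc)=n-1$, because $V(G)\setminus\{c\}$ induces a $K_{n-1}$ (forcing $\chi\geq n-1$) while assigning $b$ and $c$ a common color shows $\chi\leq n-1$.

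The main obstacle is the tight case analysis when $r=3$: with only three parts available, the candidates for the internal vertices $x,y$ of a $P_{4}$ are quite constrained, so both directions of the dichotomy must be verified with care, namely, that a $P_{4}$ exists in the generic configurations and that none joins the two singletons in the exceptional configuration, so as to identify precisely the missing edge in $S_{3}(G)$.
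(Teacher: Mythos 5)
Your proof is correct and follows essentially the same route as the paper's: you determine, pair by pair, which vertices are joined by a $P_4$ (equivalently, you compute $S_3(G)$) and read off the chromatic number, identifying the two singleton classes in the $r=3$ case as the unique non-adjacent pair in $S_3(G)$. Your write-up is somewhat more systematic than the paper's — in particular you explicitly verify the non-existence of a $P_4$ between the two singletons, which the paper only asserts — but the underlying argument is the same.
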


\begin{proof} 1. It is trivial.

2. Let $r=3$ and $G\in \{K_{n-2,1,1}, K_{{1},n-2,1}, K_{{1},1, n-2}\}$. Without loss of generality, assume that $G=K_{n-2,1,1}$ with vertex set
$V=\{v_1,v_2,\dots, v_{n-2}, u_1, w_1\}$. The path $v_iu_1w_1v_j$ is a path of length $3$ between $v_i, v_j$.
The paths $u_1v_iw_1v_j$ and $w_1v_iu_1v_j$ are  paths of length $3$ between $u_1, v_j$ and between $w_1, v_j$ respectively. On the other hand, there exists no path of length $3$ from  $u_1$ and $w_1$. Now we can assign same color to $u_1$, $w_1$ and $n-2$ other
different colors to the $v_i$s. Thus $\chi_{ei}(G)=n-1$.

3. If $r\ge 4$, and $v_i, w_j$ are two vertices of two partite sets, then taking two vertices from two other partite sets $x_m, y_l$,
one can construct a path $v_i, x_m, y_l, w_j$ of length $3$.\\
Let $r=3$ and $G=K_{k,l,m}$ where  two of $k,l,m$ are at least $2$. If $k=1$ and $l,m\ge 2$
with $V(G)=\{v_1, u_1\dots, u_l, w_1,\dots, w_m\}$,
 then $v_1u_i w_s  u_j$,
 $v_1w_i u_s  w_j$, $u_iv_1u_jw_s$, $u_iw_tv_1u_j$, $w_iu_xv_1w_j$  with $i\ne j$, give us a path of length $3$ between $v_1, u_j$, $v_1, w_j$, $u_i,w_s$, $u_i,u_j$ and $w_i,w_j$ respectively.\\
Let $r=3$ and $G=K_{k,l,m}$ where  $k,l,m\ge 2$. Then, similar to the  second part of situation 3, there exist a path of length $3$
between any two vertices of $G$. Therefore $\chi_{ei}(G)=n$. Thus the result holds.
\end{proof}

\begin{proposition}\label{prop-f{m,n}}
For fan graph $F_{m,n}$, we have
\end{proposition}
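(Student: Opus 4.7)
The plan is to compute $\chi_{ei}(F_{m,n})=\chi(S_3(F_{m,n}))$ by determining, for every unordered pair of distinct vertices of $F_{m,n}=\overline{K}_m\vee P_n$, whether they are joined by some $P_4$ in $F_{m,n}$. Write $V(\overline{K}_m)=\{u_1,\dots,u_m\}$ and $V(P_n)=\{v_1,\dots,v_n\}$ with $v_iv_{i+1}\in E(P_n)$; by the definition of the join, every $u_i$ is adjacent to every $v_j$. Note that Proposition \ref{prop vest} does not apply directly because $E(\overline{K}_m)=\emptyset$, so the $P_4$-constructions must be carried out by hand.

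First dispose of the trivial cases in which $diam(F_{m,n})\le 2$ and hence $\chi_{ei}=1$: namely $n=1$ (so $F_{m,1}$ is a star) and $(m,n)\in\{(1,1),(1,2)\}$.

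In the generic range $m\ge 2$ and $n\ge 3$, three families of templates together realise a $P_4$ between any two distinct vertices: $u_i\,v_1\,v_2\,u_j$ for pairs $(u_i,u_j)$; $u_i\,v_l\,u_j\,v_k$ with $l\ne k$, $j\ne i$ for pairs $(u_i,v_k)$; and either $v_k\,u_i\,v_{l\pm 1}\,v_l$ or $v_k\,v_{k\pm 1}\,u_i\,v_l$ (whichever avoids a vertex repetition, always possible when $n\ge 3$) for pairs $(v_k,v_l)$. Hence $S_3(F_{m,n})$ is complete on $m+n$ vertices, and $\chi_{ei}(F_{m,n})=m+n$.

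For the three boundary regimes the formula requires modification. When $m\ge 2$, $n=2$, the $(v_k,v_l)$ template fails and a direct inspection shows $(v_1,v_2)$ is the unique pair admitting no $P_4$ (any length-three walk from $v_1$ is forced through the $u$-layer and back, repeating a vertex), so $v_1,v_2$ may share a colour and $\chi_{ei}(F_{m,2})=m+1$. When $m=1$, $n=3$, the $(u_i,v_k)$ template fails for $k=2$ and a short enumeration shows $(u_1,v_2)$ is the unique non-active pair, giving $\chi_{ei}(F_{1,3})=3$. When $m=1$, $n\ge 4$, the replacement template $u_1\,v_{k\pm 2}\,v_{k\pm 1}\,v_k$ (a valid sign exists since $n\ge 4$) covers every $(u_1,v_k)$ pair, and the $v$-to-$v$ templates remain valid, so every pair is active and $\chi_{ei}(F_{1,n})=n+1$.

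The principal obstacle will be the negative verification in the boundary regimes: for each candidate inactive pair one must enumerate all possible choices of the two middle vertices of a hypothetical $P_4$ and rule them out. The search spaces are very small, but the enumeration must be done systematically so that no configuration is overlooked, particularly in the delicate case $F_{1,3}$ where a single pair drops out of the generic complete pattern.
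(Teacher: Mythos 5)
Your proof is correct and follows essentially the same route as the paper: exhibit explicit $P_4$ templates between the relevant pairs, identify the few inactive pairs ($v_1,v_2$ when $n=2$, and the apex--middle pair in $F_{1,3}$), and read off $\chi_{ei}$ as the chromatic number of the resulting three-step graph. Your treatment is slightly more complete in that it also records the degenerate cases $n=1$ and $(m,n)=(1,3)$, which the paper's stated formula omits (the latter being handled there only implicitly via $F_{1,3}\cong F_{2,2}$), and your value $3$ for $F_{1,3}$ agrees with that identification.
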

$$\chi_{ei}(F_{m,n})=\begin{cases}
 1 &\ \ m=1,n=2  \\
 m+1 &\ \ m\geq 2, n=2 \\
 m+n &\ \ m= 1, n\geq 4  \ \textmd{and} \ m\geq 2, n\geq 3\\
 \end{cases}.$$
\begin{proof}
 There are three situations to be considered.\\
    1. If $m=1,n=2$. It is obvious.\\
    2. If $m\geq 2, n=2$ and $V(F_{m,2})=\{v_1, v_2,\dots, v_m, u_1,u_2\}$, then by definition $F_{2,2}\cong F_{1,3}$
 and $F_{m,2}=\overline{K}_{m}\vee P_{2}$. Two vertices $u_1,u_2$ receive  same color because there is no  path of length $3$ between them. On the other hand there exist a path
  $v_iu_1u_2v_j$ of length $3$ between any pair of vertices $v_i,v_j$,  and there exist a path $v_iu_lv_ju_k$ with $l\neq k$ of length $3$ between any pair of vertices $v_i,u_k$.  Therefore $\chi_{ei}(F_{m,2})=m+1$.\\
3.1.  Let $m= 1, n\geq 4$  and $V(F_{1,n})=\{v_1, u_1,u_2,\dots, u_n\}$. Then $v_1u_{i+2}u_{i+1}u_{i}$, ($i\leq n-2$), $v_1u_{i-2}u_{i-1}u_{i}$, ($i\geq n-1$), $u_{i}v_1u_{j+1}u_{j}$, ($i<j<n$),
$u_{i}v_1u_{n-1}u_{n}$ ($i<n-1$) and $u_{n-1}u_{n-2}v_1u_{n}$
are the paths of length $3$ between any pair of vertices $u_i$, $u_j$ and $v_1$, $u_i$. Thus $\chi_{ei}(F_{1,n})=1+n$.\\
3.2. Let  $m\geq 2, n\geq 3$.
Using the reasons given in proof of part 3.1, one can easy to see that there is a path of length $3$ between any pair of vertices of $F_{m,n}$.\\
All in all the proof is completed.
\end{proof}

\section{Grid graphs, cylinder graphs and tori graphs}
In this section we argue on the $e$-injective coloring of three families of Cartesian product graphs (grid, cylinder and tori graphs),  and we gain the $e$-injective chromatic number of them. For this sake we partition this section to three subsections.
\subsection{Grid graphs}
In this subsection, our main purpose is to give the $e$-injective chromatic number for grid graphs. 
A {\it ladder graph} $L_n$ is a simple connected undirected grid graph $P_2\square P_n$ with $2n$ vertices and $3n-2$ edges.

\begin{proposition}\label{prop-ln}
Let $G\cong L_{n}$ be a ladder graph. Then $\chi_{ei}(L_{n})=2,\ (n\geq{2})$.
\end{proposition}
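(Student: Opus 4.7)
The plan is to prove this by exhibiting a valid $2$-coloring and then observing that a single color is not sufficient. The key structural fact that drives both bounds is that the ladder graph $L_n=P_2\square P_n$ is bipartite.

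For the lower bound $\chi_{ei}(L_n)\ge 2$, I would simply exhibit a path $P_4$ in $L_n$ for every $n\ge 2$. Denote the vertices of $L_n$ by $(i,j)$ with $i\in\{1,2\}$ and $1\le j\le n$. For $n=2$, the four vertices form $C_4$, and $(2,1)(1,1)(1,2)(2,2)$ is a $P_4$. For $n\ge 3$, the path $(1,1)(1,2)(1,3)(2,3)$ is a $P_4$ inside $L_n$. In either case the two endpoints of this $P_4$ must receive distinct colors under any $e$-injective coloring, so $\chi_{ei}(L_n)\ge 2$.

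For the upper bound, I would define $f:V(L_n)\to\{1,2\}$ by
\[
f\bigl((i,j)\bigr) \;=\; 1+\bigl((i+j)\bmod 2\bigr),
\]
that is, the standard $2$-coloring induced by the bipartition of $L_n$. The verification that $f$ is $e$-injective amounts to a single observation: if $u,v\in V(L_n)$ are the endpoints of a $P_4$, then there is a walk of length $3$ between them in the bipartite graph $L_n$, so $u$ and $v$ lie in different parts of the bipartition, which means their parities $(i+j)\bmod 2$ differ. Consequently $f(u)\ne f(v)$. Hence $f$ is an $e$-injective $2$-coloring and $\chi_{ei}(L_n)\le 2$.

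There is no real obstacle here once one recognises that $L_n$ is bipartite; the same argument in fact shows that any bipartite graph containing a $P_4$ satisfies $\chi_{ei}=2$. The only points requiring a sentence of care are (i) confirming the existence of a $P_4$ for the borderline case $n=2$ (handled via the $C_4$ above), and (ii) emphasising that while $e$-injective coloring need not be proper in general, the bipartition coloring automatically handles all $P_4$-endpoint pairs because such pairs are connected by an odd walk.
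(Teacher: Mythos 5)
Your proof is correct and uses the same checkerboard (bipartition) coloring as the paper; the only real difference is that where the paper asserts the coloring is ``easily seen'' to be $e$-injective, you supply the clean justification that a $P_4$ is an odd walk in a bipartite graph, so its endpoints necessarily lie in opposite parts of the bipartition and hence receive distinct colors. Your closing observation --- that any bipartite graph containing a $P_4$ satisfies $\chi_{ei}=2$ --- is a genuine (and worthwhile) strengthening: it subsumes not only this proposition but also the paper's later computations for grids $P_m\square P_n$, complete and $k$-regular bipartite graphs, even cycles, and trees of diameter at least $3$.
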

\begin{proof}
Let $V(L_n)=\{v_{1,1}, v_{1,2}, \dots, v_{1,n}, v_{2,1}, v_{2,2}, \dots, v_{2,n}\}$.  Since $L_{n}$ contains copy of $C_{4}$ as subgraph, so $\chi_{ei}(L_{n})\geq2$.\\
We show an $e$-injective coloring with $2$ colors for $L_n$.
For this, we define a function $f: V \rightarrow \{1,2\}$ as follows:\\
$$f(v_{i,j})=\begin{cases}
 1 & i=1,\  2\nmid j\ \emph{or}\ i=2,\ 2\mid j\\
 2 & i=1,\ 2\mid j\ \emph{or}\ i=2,\ 2\nmid j\\
 \end{cases}$$

This coloring  is easily seen that, give us  an $e$-injective coloring. Therefore, $\chi_{ei}(L_{n})=2$, (see Figure \ref{Fig4}).
\begin{figure}[h]
	\centering
	\begin{tikzpicture}[scale=.5, transform shape]

		\node [scale=1.8] at (-5,2.6) {$v_{1,1}$};
		\node [scale=1.8] at (-3,2.6) {$v_{1,2}$};
		\node [scale=1.8] at (-1.2,2.6) {$v_{1,3}$};
        \node [scale=1.8] at (1,2.6) {$v_{1,4}$};
		\node [scale=1.8] at (3,2.6) {$v_{1,5}$};

        \node [scale=1.8] at (-5,-0.8) {$v_{2,1}$};
		\node [scale=1.8] at (-3,-0.8) {$v_{2,2}$};
		\node [scale=1.8] at (-1,-0.8) {$v_{2,3}$};
		\node [scale=1.8] at (1,-0.8) {$v_{2,4}$};
        \node [scale=1.8] at (3,-0.8) {$v_{2,5}$};
        \draw[line width=1pt](-5,2) - - (3,2);
        \draw[line width=1pt](-5,0) - - (3,0);
        \draw[line width=1pt](-5,2) - - (-5,0);
        \draw[line width=1pt](-3,2) - - (-3,0);
        \draw[line width=1pt](-1,2) - - (-1,0);
        \draw[line width=1pt](1,2) - - (1,0);
        \draw[line width=1pt](3,2) - - (3,0);
        \draw[dotted] [line width=1pt](3,2) - - (9,2);
        \draw[dotted] [line width=1pt](3,0) - - (9,0);

        \node [draw, shape=circle,fill=red] (v1) at  (-5,2) {};
		\node [draw, shape=circle,fill=yellow] (v2) at  (-3,2) {};
		\node [draw, shape=circle,fill=red] (v3) at  (-1,2) {};
		\node [draw, shape=circle,fill=yellow] (v4) at  (1,2) {};
		\node [draw, shape=circle,fill=red] (v5) at  (3,2) {};

        \node [draw, shape=circle,fill=yellow] (u1) at  (-5,0) {};
        \node [draw, shape=circle,fill=red] (u2) at  (-3,0) {};
        \node [draw, shape=circle,fill=yellow] (u3) at  (-1,0) {};
        \node [draw, shape=circle,fill=red] (u4) at  (1,0) {};
        \node [draw, shape=circle,fill=yellow] (u5) at  (3,0) {};
	\end{tikzpicture}
\caption{the ladder graph of $L_n$}\label{Fig4}
\end{figure}
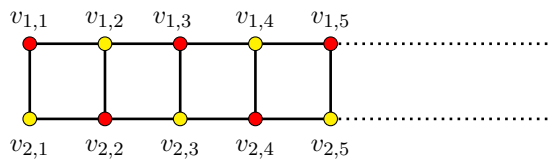

\end{proof}
%%%%%%%%%%%%%%%%%%%%%%%%%%%%%%%%%%%%%%%%%%%%%%%%%%%%%%%%%%%%%%%%%%%%%%%%%%%%%%%%%55555

\begin{theorem}\label{theo-P*P}
For paths $P_{m}$ and $P_{n}$ $(m, n\ge 2)$, we have $\chi_{ei}(P_{m}\square P_{n})=2$.
\end{theorem}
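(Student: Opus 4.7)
The plan is to establish the bound $\chi_{ei}(P_m\square P_n)\le 2$ by exhibiting an explicit $2$-coloring, and the matching lower bound by noting a single $P_4$.

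For the lower bound, whenever $m,n\ge 2$ the grid $P_m\square P_n$ contains a $4$-cycle $v_{1,1}v_{1,2}v_{2,2}v_{2,1}v_{1,1}$, and in particular the walk $v_{1,2}v_{1,1}v_{2,1}v_{2,2}$ is a $P_4$ whose endpoints $v_{1,2}$ and $v_{2,2}$ are distinct, so $\chi_{ei}(P_m\square P_n)\ge 2$.

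For the upper bound, I would exploit the fact that $P_m\square P_n$ is bipartite, with the standard bipartition by the parity of $i+j$: set
\[
f(v_{i,j})=\begin{cases}1 & \text{if } i+j \text{ is even},\\ 2 & \text{if } i+j \text{ is odd}.\end{cases}
\]
The key observation is that in any bipartite graph, a walk of odd length connects vertices in opposite parts. Any $P_4=uxyv$ is a walk of length $3$, so the endpoints $u$ and $v$ necessarily lie in different parts of the bipartition, and hence receive distinct colors under $f$. This verifies that $f$ is an $e$-injective $2$-coloring.

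There is essentially no obstacle here beyond making the bipartiteness argument explicit; the same coloring pattern already appears in Proposition \ref{prop-ln} for the ladder case ($m=2$), and the reasoning extends verbatim to all $m,n\ge 2$ because the bipartition by parity of $i+j$ is valid in every $P_m\square P_n$. Combining the two bounds yields $\chi_{ei}(P_m\square P_n)=2$.
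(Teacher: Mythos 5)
Your proposal is correct and follows essentially the same route as the paper: the identical coloring by parity of $i+j$ for the upper bound, and a $P_4$ around a unit square for the lower bound. Your explicit appeal to bipartiteness (odd-length walks join opposite parts) is a cleaner justification of the step the paper merely asserts, but it is the same argument.
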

\begin{proof}
Let $V(P_{n}^{i})=\{v_{i,1},v_{i,2},v_{i,3},\ldots,v_{i,n}\}$ be the set vertex of $i$th row of $P_{m}\square P_{n}$. Since $v_{i,j},v_{i+1,j},v_{i+1,j+1},v_{i,j+1}$ is a path of length $3$ between any two adjacent vertices in $P_{n}$. Therefore $\chi_{ei}({P_{m}\square P_{n}})\geq 2$. \\
We construct an optimal $e$-injective coloring with $2$ colors of ${P_{m}\square P_{n}}$ using of pattern given in the matrix form in Table \ref{T1}. In this matrix {\it ij}th element represents the color of ${v_{i,j}}$. We define a function $f:V\rightarrow\{1,2\}$ as follows.\\
$$f(v_{i,j})=\begin{cases}
 1 & 2|i+j \\
 2 & 2\nmid i+j\\
 \end{cases}$$\\
Since there is no path of length $3$ between every pair of vertices with the same color, this gives an $e$-injective coloring of $({P_{m}\square P_{n}})$ with $2$ colors. Therefore $\chi_{ei}({P_{m}\square P_{n}})=2$.
\begin{table}[h]
  \centering
  $\begin{pmatrix}
    1 & 2 & 1 & \dots \\
    2 & 1 & 2 & \dots \\
    1 & 2 & 1 & \dots \\
    \vdots & \vdots & \vdots \\
  \end{pmatrix}$
  \caption{Matrix $e$-injective coloring of ${P_{m}\square P_{n}}$ }\label{T1}
\end{table}

\end{proof}
%\newpage

\subsection{Cylinder graphs}
In this subsection, our main purpose is to give the $e$-injective chromatic number for cylinder graphs. 

A {\it Prism graph} ( or circular ladder graph) is a simple cubic cylinder graph  of an $n$-gonal prism has $2n$ vertices and $3n$ edges and denoted
 by $D_{n}$, see Figure \ref{Fig-dn}.
 \begin{proposition}\label{prop-dn}
Let $G\cong D_n$ be a prism graph. Then
\end{proposition}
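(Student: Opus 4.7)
The plan is to translate the problem into the three-step graph $S_{3}(D_{n})$ and exploit the circulant-like symmetry of $D_{n}=C_{n}\square P_{2}$. Label the vertices of $D_{n}$ as $\{v_{i},u_{i}\mid i\in\mathbb{Z}_{n}\}$, where $v_{0}v_{1}\cdots v_{n-1}v_{0}$ and $u_{0}u_{1}\cdots u_{n-1}u_{0}$ are the two cycles and $v_{i}u_{i}$ is a rung. I would first enumerate every $P_{4}$ starting at $v_{i}$ by splitting on its first step (through $v_{i+1}$, through $v_{i-1}$, or through $u_{i}$); a routine check of the twelve resulting walks shows that the endpoints realizable by a $P_{4}$ from $v_{i}$ are exactly the set
\[
N_{S_{3}(D_{n})}(v_{i})=\{\,v_{i\pm 1},\ v_{i\pm 3},\ u_{i},\ u_{i\pm 2}\,\}\pmod n,
\]
and, by the top-bottom symmetry, $N_{S_{3}(D_{n})}(u_{i})=\{u_{i\pm 1},u_{i\pm 3},v_{i},v_{i\pm 2}\}$. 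For $n\geq 7$ these seven elements are distinct, whereas small $n$ collapses (e.g.\ $v_{i-3}\equiv v_{i+3}\pmod 6$), which is what drives the different cases.

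Next I would split on $n$. When $n$ is even and $n\geq 4$, I would define
\[
A=\{v_{i}:i\ \text{odd}\}\cup\{u_{i}:i\ \text{even}\},\qquad B=\{v_{i}:i\ \text{even}\}\cup\{u_{i}:i\ \text{odd}\},
\]
and verify that every edge of $S_{3}(D_{n})$ crosses $(A,B)$: the $v$-to-$v$ steps shift the index by an odd amount ($\pm 1$ or $\pm 3$), the $v$-to-$u$ steps $v_{i}\sim u_{i}$ keep the index but switch layer, and $v_{i}\sim u_{i\pm 2}$ shifts by an even amount and switches the layer; all of these send $A$ into $B$ and vice versa. Hence $S_{3}(D_{n})$ is bipartite and $\chi_{ei}(D_{n})=2$. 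The small odd cases are dispatched directly: for $n=3$ all index identifications force $S_{3}(D_{3})=K_{6}$ (equivalently, every pair of vertices of $D_{3}$ lies at the ends of some $P_{4}$); for $n=5$ the set $\{\pm 1,\pm 3\}$ exhausts $\mathbb{Z}_{5}\setminus\{0\}$, so each layer becomes a $K_{5}$ in $S_{3}(D_{5})$, and a five-colour assignment $f(v_{i})=i$, $f(u_{i})=i+1\pmod 5$ verifies the upper bound; for $n=7$, the $v$-subgraph of $S_{3}(D_{7})$ is the circulant $C_{7}(1,3)$, whose complement is a $C_{7}$ and hence $\alpha=2$, forcing $\chi\geq\lceil 7/2\rceil=4$, with an explicit 4-colouring lifted to the $u$-layer by a shift.

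The main obstacle is the case $n$ odd with $n\geq 9$. Here the $v$-subgraph of $S_{3}(D_{n})$ is the circulant $C_{n}(1,3)$, and the crux is to produce a proper 3-colouring of it and then lift it to a colouring of $D_{n}$. I would exhibit an explicit partition of $\mathbb{Z}_{n}$ into three independent sets of the circulant: for instance, taking classes that avoid the forbidden differences $\{1,3\}\bmod n$ by choosing arithmetic progressions of common difference $2$ and then adjusting the tail once $n\not\equiv 0\pmod 6$ (for $n=9$ the classes $\{0,2,4\},\{1,6,8\},\{3,5,7\}$ work, and an analogous construction generalises). Having $f$ defined on the $v$-layer, I would set $f(u_{i}):=f(v_{i+1})$; the shift by $1$ is precisely chosen so that the required differences $u_{i}\sim v_{i},v_{i\pm 2}$ translate into differences $1,1,3$ in the circulant, each forbidden and therefore giving different colours, and the $u$-to-$u$ constraints are automatically satisfied because they are isomorphic copies of the $v$-to-$v$ constraints. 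Combined with the triangle $v_{i}v_{i+3}v_{i-3}$ in $S_{3}(D_{n})$ (realised by three $P_{4}$'s going around the cycle), which shows $\chi_{ei}(D_{n})\geq 3$, this pins the value at $3$.
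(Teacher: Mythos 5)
Your proposal is correct in substance and takes a genuinely different, more structural route than the paper. Where the paper works directly in $D_n$ and exhibits an ad hoc colouring in each case, you pass to the three-step graph and identify its two layers as copies of the circulant $C_n(1,3)$ joined by the cross-adjacencies $v_i\sim u_i,u_{i\pm 2}$. This buys you (i) a one-line treatment of the even case via an explicit bipartition of $S_3(D_n)$; (ii) a cleaner lower bound for $n=7$: the complement of $C_7(1,3)$ is a $7$-cycle, so $\alpha=2$ and $\chi\ge\lceil 7/2\rceil=4$, whereas the paper only gestures at Proposition~\ref{prop-cn} here; and (iii) a single uniform lifting rule $f(u_i)=f(v_{i+1})$ that converts any proper colouring of $C_n(1,3)$ on the $v$-layer into an $e$-injective colouring of $D_n$, because every cross-constraint translates into one of the forbidden differences $1$ or $3$. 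The paper instead writes out separate colour classes for the residues of $n$ modulo $3$ in the odd case; your reduction is more transparent and reusable (the same circulant analysis underlies the later $P_m\square C_n$ results).

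Two points need repair before this is a complete proof. First, your lower bound for odd $n\ge 9$ rests on the ``triangle $v_iv_{i+3}v_{i-3}$,'' but for $n\ge 11$ the vertices $v_{i+3}$ and $v_{i-3}$ are at distance $\min(6,n-6)\ge 5$ in $D_n$, so they are \emph{not} adjacent in $S_3(D_n)$ and this triangle does not exist. The bound $\chi_{ei}(D_n)\ge 3$ is nonetheless immediate from your own adjacency computation: $v_0v_1\cdots v_{n-1}v_0$ is an odd cycle of $S_3(D_n)$ (consecutive $v_i$ are joined by the path $v_iu_iu_{i+1}v_{i+1}$), so $C_n(1,3)$ is non-bipartite. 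Second, the general $3$-colouring of $C_n(1,3)$ for odd $n\ge 9$ is only asserted to ``generalise''; since this is the heart of the hardest case it should be written down. One clean choice is the partition $\{0,2,\dots,n-5\}$, $\{3,5,\dots,n-2\}$, $\{1,n-3,n-1\}$ of $\mathbb{Z}_n$: the first two classes have only even internal differences lying in $[2,n-5]$, and the third has differences $2$, $n-4$, $n-2$, none of which is $\pm 1$ or $\pm 3$ once $n\ge 9$; for $n=9$ this reduces exactly to your example. With these two fixes, and with the $4$-colouring $1,2,1,2,3,4,3$ of $C_7(1,3)$ made explicit for the $n=7$ upper bound, the argument is complete.
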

\begin{equation}
\chi_{ei}(D_n)=\begin{cases}
 6 & \textmd{if} \ n=3 \\
 5 & \textmd{if} \ n=5 \\
 4 & \textmd{if} \ n=7 \\
 2 & \textmd{if} \ n\ \emph{is}\ \emph{even}\ \emph{and}\ n\geq 4 \\
 3 & \textmd{if} \ n\ \emph{is}\ \emph{odd}\ \emph{and}\ n\geq 9 \\
 \end{cases}
\end{equation}
\begin{proof}
Let $V(D_{n})=\{v_{1},v_{2},\dots ,v_{n}, u_{1},u_{2},\dots ,u_{n}\}$. From now on suppose that the vertices $v_i$s are for outer cycle and the vertices $u_i$s are for inner cycle. We bring up  five situations.\\
1. Let $n=6$. It is  obvious that $\chi_{ei}(D_3)=6$.\\
2. Let $n=5$. There is a path  of length $3$ between any two different vertices in outer cycle $C_{5}$, and inner cycle $C_{5}$. So $\chi_{ei}(D_{5})\geq 5$. On the other hand, there is no path of length $3$ between $v_i$ and $u_{i-1}$ (mod $5$). This shows that, $v_i$ and $u_{i-1}$ can be assigned same color. Therefore $\chi_{ei}(D_{5})=5$. (See  $D_5$ in Figure \ref{Fig-dn})\\
3.   Let $n=7$. There is a path of length $3$ between any two adjacent vertices in inner cycle $C_{7}$ and also from Proposition \ref{prop-cn}, we realize that $\chi_{ei}(D_{7})\geq4$. On the other hand, there is no path of length $3$ between $v_i$ and $u_{i-1}$ (mod $7$). This shows that, $v_i$ and $u_{i-1}$ can be assigned  the same color. Therefore $\chi_{ei}(D_{7})=4$. \\
 (See $D_7$ in Figure \ref{Fig-dn})

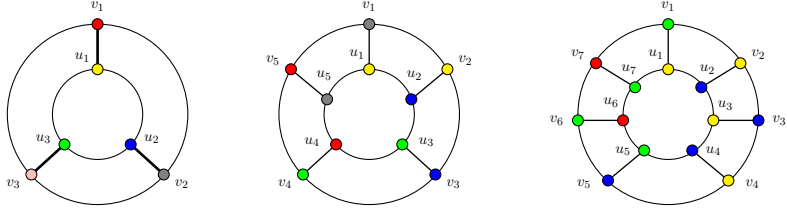
\begin{figure}[h]
	\centering
		\begin{tikzpicture}[scale=.4, transform shape]
		\node [scale=1.3] at (-0.5,1.9) {$u_{1}$};
		\node [scale=1.3] at (0,3.6) {$v_{1}$};
        \node [scale=1.3] at (1.8,-0.8) {$u_{2}$};
        \node [scale=1.3] at (2.8,-2.3) {$v_{2}$};
        \node [scale=1.3] at (-1.8,-0.8) {$u_{3}$};
		\node [scale=1.3] at (-2.8,-2.3) {$v_{3}$};
        \draw  (0,0) circle (1.5cm);
        \draw  (0,0) circle (3cm);
        \draw[line width=1pt](0,1.5) - - (0,3);
        \draw[line width=1pt](1.1,-1) - - (2.2,-2);
        \draw[line width=1pt](-1.1,-1) - - (-2.2,-2);
         \node [draw, shape=circle,fill=yellow] (v1) at  (0,1.5) {};
	    \node [draw, shape=circle,fill=blue] (v2) at  (1.1,-1) {};
        \node [draw, shape=circle,fill=green] (v3) at  (-1.1,-1) {};

		\node [draw, shape=circle,fill=red] (u1) at  (0,3) {};
	    \node [draw, shape=circle,fill=gray] (u2) at  (2.2,-2) {};
		\node [draw, shape=circle,fill=pink] (u3) at  (-2.2,-2) {};

	\end{tikzpicture}
\qquad
	\centering
	\begin{tikzpicture}[scale=.4, transform shape]

		\node [scale=1.3] at (-0.4,2) {$u_{1}$};
		\node [scale=1.3] at (0,3.6) {$v_{1}$};
		\node [scale=1.3] at (1.5,1.3) {$u_{2}$};
        \node [scale=1.3] at (3.2,1.8) {$v_{2}$};
        \node [scale=1.3] at (1.9,-0.9) {$u_{3}$};
        \node [scale=1.3] at (2.8,-2.3) {$v_{3}$};
        \node [scale=1.3] at (-1.9,-0.9) {$u_{4}$};
		\node [scale=1.3] at (-2.8,-2.3) {$v_{4}$};
        \node [scale=1.3] at (-1.5,1.3) {$u_{5}$};
		\node [scale=1.3] at (-3.2,1.8) {$v_{5}$};
        \draw  (0,0) circle (1.5cm);
        \draw  (0,0) circle (3cm);
        \draw[line width=0.5pt](0,1.5) - - (0,3);
        \draw[line width=0.5pt](1.1,-1) - - (2.2,-2);
        \draw[line width=0.5pt](1.4,0.5) - - (2.6,1.5);
        \draw[line width=0.5pt](-1.1,-1) - - (-2.2,-2);
        \draw[line width=0.5pt](-1.4,0.5) - - (-2.6,1.5);

        \node [draw, shape=circle,fill=yellow] (v3) at  (0,1.5) {};
	    \node [draw, shape=circle,fill=green] (v1) at  (1.1,-1) {};
		\node [draw, shape=circle,fill=blue] (v2) at  (1.4,0.5) {};
        \node [draw, shape=circle,fill=red] (v4) at  (-1.1,-1) {};
        \node [draw, shape=circle,fill=gray] (v5) at  (-1.4,0.5) {};

		\node [draw, shape=circle,fill=gray] (u1) at  (0,3) {};
		\node [draw, shape=circle,fill=yellow] (u2) at  (2.6,1.5) {};
	    \node [draw, shape=circle,fill=blue] (u3) at  (2.2,-2) {};
		\node [draw, shape=circle,fill=green] (u4) at  (-2.2,-2) {};
        \node [draw, shape=circle,fill=red] (u5) at  (-2.6,1.5) {};
	\end{tikzpicture}
\qquad
    \centering
	\begin{tikzpicture}[scale=.4, transform shape]

		\node [scale=1.3] at (-0.4,2) {$u_{1}$};
		\node [scale=1.3] at (0,3.6) {$v_{1}$};
		\node [scale=1.3] at (1.3,1.5) {$u_{2}$};
        \node [scale=1.3] at (3,2) {$v_{2}$};
        \node [scale=1.3] at (1.9,0.3) {$u_{3}$};
        \node [scale=1.3] at (2.8,-2.3) {$v_{4}$};
        \node [scale=1.3] at (-1.9,0.4) {$u_{6}$};
		\node [scale=1.3] at (-2.8,-2.3) {$v_{5}$};
        \node [scale=1.3] at (-1.5,-1.2) {$u_{5}$};
		\node [scale=1.3] at (-3,2) {$v_{7}$};
		\node [scale=1.3] at (-1.3,1.5) {$u_{7}$};
		\node [scale=1.3] at (1.5,-1.2) {$u_{4}$};
		\node [scale=1.3] at (3.7,-0.2) {$v_{3}$};
		\node [scale=1.3] at (-3.7,-0.2) {$v_{6}$};

        \draw  (0,0) circle (1.5cm);
        \draw  (0,0) circle (3cm);
        \draw[line width=0.5pt](0,1.5) - - (0,3);
        \draw[line width=0.5pt](1.1,0.9) - - (2.4,1.7);
        \draw[line width=0.5pt](-1.1,0.9) - - (-2.4,1.7);
        \draw[line width=0.5pt](1.5,-0.2) - - (3,-0.2);
        \draw[line width=0.5pt](-1.5,-0.2) - - (-3,-0.2);
        \draw[line width=0.5pt](-0.8,-1.2) - - (-2,-2.2);
        \draw[line width=0.5pt](0.8,-1.2) - - (2,-2.2);

        \node [draw, shape=circle,fill=yellow] (u1) at  (0,1.5) {};
		\node [draw, shape=circle,fill=blue] (u2) at  (1.1,0.9) {};
		\node [draw, shape=circle,fill=yellow] (u3) at  (1.5,-0.2) {};
	    \node [draw, shape=circle,fill=blue] (u4) at  (0.8,-1.2) {};
        \node [draw, shape=circle,fill=green] (u5) at  (-0.8,-1.2) {};
		\node [draw, shape=circle,fill=red] (u6) at  (-1.5,-0.2) {};
        \node [draw, shape=circle,fill=green] (u7) at  (-1.1,0.9) {};

		\node [draw, shape=circle,fill=green] (v1) at  (0,3) {};
		\node [draw, shape=circle,fill=yellow] (v2) at  (2.4,1.7) {};
        \node [draw, shape=circle,fill=green] (v6) at  (-3,-0.2) {};
        \node [draw, shape=circle,fill=blue] (v3) at  (3,-0.2) {};
	    \node [draw, shape=circle,fill=yellow] (v4) at  (2,-2.2) {};
		\node [draw, shape=circle,fill=blue] (v5) at  (-2,-2.2) {};
        \node [draw, shape=circle,fill=red] (v7) at  (-2.4,1.7) {};
	\end{tikzpicture}
\caption{{The prism graph of $D_n$ with $n\in \{3,  5,7\}$}}\label{Fig-dn}
\end{figure}
%\newpage
4.  Assume that $n\ge 4$ is even. Since $D_{n}$ contains a copy of $C_{n}$ as subgraph, we need at least two colors. This shows that $\chi_{ei}(D_n)\ge 2$. On the other hand, $v_iu_iu_{i+1}v_{i+1}$, $u_iv_iv_{i+1}u_{i+1}$, $v_iv_{i+1}u_{i+1}u_i$  are paths of length $3$ between $v_i,v_{i+1}$ (mod $n$) and between $u_i,u_{i+1}$ (mod $n$) and between $u_i,v_{i}$ respectively. It is straightforward to see that There is no path of length
$3$ between $v_i,u_{i+1}$ (mod $n$) and between $u_i,v_{i+1}$ (mod $n$).  Therefore  assigning colors $1$ to the vertices  $v_{2i+1}$ and $u_{2i}$ and
assigning color $2$  to the vertices $u_{2i+1}$ and $v_{2i}$ give us an $e$-injective coloring.
 Therefore $\chi_{ei}(D_{n})=2$, (see Figure \ref{Fig-even}).\\
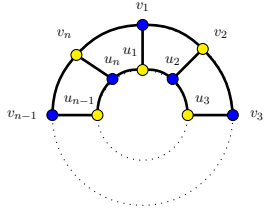
\begin{figure}[h]
	\centering
	\begin{tikzpicture}[scale=.4, transform shape]

\node [scale=1.3] at (-2.1,0.5) {$u_{n-1}$};
		\node [scale=1.3] at (-1,1.8) {$u_{n}$};
		\node [scale=1.3] at (-0.4,2.1) {$u_{1}$};
		\node [scale=1.3] at (1,1.8) {$u_{2}$};
		\node [scale=1.3] at (2,0.5) {$u_{3}$};
		\node [scale=1.3] at (-4,0) {$v_{n-1}$};
		\node [scale=1.3] at (-2.6,2.7) {$v_{n}$};
		\node [scale=1.3] at (0,3.6) {$v_{1}$};
        \node [scale=1.3] at (2.6,2.7) {$v_{2}$};
        \node [scale=1.3] at (3.8,0) {$v_{3}$};
        \draw [dotted] (0,0) circle (1.5cm);
        \draw [dotted] (0,0) circle (3cm);
        \draw[line width=1pt](-3,0) to [bend left=20 ](-2.2,2);
        \draw[line width=1pt](-2.2,2) to [bend left=23 ](0,3);
        \draw[line width=1pt](0,3) to [bend left=23 ](2,2.2);
        \draw[line width=1pt](2,2.2) to [bend left=20 ](3,0);

        \draw[line width=1pt](-1.5,0) to [bend left=23 ](-1,1.2);
        \draw[line width=1pt](-1,1.2) to [bend left=23 ](0,1.5);
        \draw[line width=1pt](0,1.5) to [bend left=23 ](1,1.2);
        \draw[line width=1pt](1,1.2) to [bend left=20 ](1.5,0);

        \draw[line width=1pt](-3,0) - - (-1.5,0);
        \draw[line width=1pt](-2.2,2) - - (-1,1.2);
        \draw[line width=1pt](0,3) - - (0,1.5);
        \draw[line width=1pt](2,2.2) - - (1,1.2);
        \draw[line width=1pt](3,0) - - (1.5,0);
        \node [draw, shape=circle,fill=blue] (v1) at  (-3,0) {};
        \node [draw, shape=circle,fill=yellow] (v2) at  (-2.2,2) {};
		\node [draw, shape=circle,fill=blue] (v3) at  (0,3) {};
		\node [draw, shape=circle,fill=yellow] (v4) at  (2,2.2) {};
	    \node [draw, shape=circle,fill=blue] (v5) at  (3,0) {};
		
  \node [draw, shape=circle,fill=yellow] (v1) at  (-1.5,0) {};
        \node [draw, shape=circle,fill=blue] (v2) at  (-1,1.2) {};
		\node [draw, shape=circle,fill=yellow] (v3) at  (0,1.5) {};
		\node [draw, shape=circle,fill=blue] (v4) at  (1,1.2) {};
	    \node [draw, shape=circle,fill=yellow] (v5) at  (1.5,0) {};
	\end{tikzpicture}
\caption{The prism graph of $D_n$ for even  $n$}\label{Fig-even}
\end{figure}
%\newpage
5. Assume that $n\ge 9$ is odd, (as depicted in Figure  \ref{Fig-odd}).
Since $n$ is odd. It is obvious that, $\chi_{ei}(D_{n})\ge 3$ (see Figure  \ref{Fig-odd}). Since $n$ is odd, then we can consider three moods for $n$.\\
5.1 Let $3\mid n$ and $n=3k$. Then, we assign color $1$ to any vertex in  $\{v_1, u_2, v_3\}\cup \{v_{6j-1}, u_{6j-2}, u_{6j}: 1\le j\le \frac{3k-3}{6}\}$, assign color $2$ to any vertex in $\{v_2,u_1,u_3\}\cup  \{v_{6j+1}, u_{6j+2}, v_{6j+3}: 1\le j\le \frac{3k-3}{6}\}$, and assign color $3$ to
any vertex in $\{\{v_{2i+2}, u_{2i+3}: 1\le i\le \frac{3k-3}{2}\}$.\\
5.2 Let $3\mid n-2$ and $n=3k+2$. Then, the vertices $v_i$ and $u_i$ for $1\le i\le 3k$ take same colors used in part 5.1, and next the vertices $v_{3k+1}, u_{3k+2}$ can be assigned by color $1$, and  the vertices $u_{3k+1}, v_{3k+2}$ can be assigned by color $2$.\\
5.3 Let $3\mid n-4$ and $n=3k+4$. Then, the vertices $v_i$ and $u_i$ for $1\le i\le 3k$ take same colors used in part 5.1, and next the vertices
$v_{3k+1}, v_{3k+3}, u_{3k+2} u_{3k+4}$ can be assigned by color $1$, and  the vertices $u_{3k+1}, v_{3k+2} u_{3k+3}$ can be assigned by color $2$
and finally, assign color $3$ for $v_{3k+4}$. These show $\chi_{ei}(D_{n})\le 3$.  Therefore we deduce that $\chi_{ei}(D_{n})= 3$ while $n$ is odd.

    Therefore $\chi_{ei}(D_{n})=3$.\\
 \begin{figure}[h]
	\centering
	\begin{tikzpicture}[scale=.4, transform shape]

		\node [scale=1.3] at (0,3.6) {$v_{1}$};
        \node [scale=1.3] at (1.8,3.2) {$v_{2}$};
        \node [scale=1.3] at (3,2.1) {$v_{3}$};
        \node [scale=1.3] at (3.6,1) {$v_{4}$};
        \node [scale=1.3] at (3.7,0) {$v_{5}$};
        \node [scale=1.3] at (3.6,-1) {$v_{6}$};
        \node [scale=1.3] at (3,-2.1) {$v_{7}$};
        \node [scale=1.3] at (1.8,-3.2) {$v_{8}$};
        \node [scale=1.3] at (0,-3.6) {$v_{9}$};
		\node [scale=1.3] at (-1.8,3.2) {$v_{n}$};
		\node [scale=1.3] at (-3.1,2.1) {$v_{n-1}$};

		\node [scale=1.3] at (0,1.4) {$u_{1}$};
		\node [scale=1.3] at (0.6,1.4) {$u_{2}$};
		\node [scale=1.3] at (0,-1.4) {$u_{9}$};
		\node [scale=1.3] at (-0.6,1.4) {$u_{n}$};

        \draw [dotted] (0,0) circle (2cm);
        \draw [dotted] (0,0) circle (3cm);
        \draw[line width=1pt](3,0) to [bend left=20 ](2.8,-1);
        \draw[line width=1pt](0,3) to [bend left=23 ](2,2.2);
        \draw[line width=1pt](2,2.2) to [bend left=20 ](3,0);
        \draw[line width=1pt](2.8,-1) to [bend left=20 ](2.2,-2);
        \draw[line width=1pt](2.2,-2) to [bend left=20 ](1.3,-2.7);
        \draw[line width=1pt](1.3,-2.7) to [bend left=18 ](0,-3);
        \draw[line width=1pt](0,-3) to [bend left=20 ](-1.3,-2.7);
        \draw[line width=1pt](-1.3,-2.7) to [bend left=20 ](-2.2,-2);
        \draw[line width=1pt](-2.3,1.8) to [bend left=25 ](0,3);

        \draw[line width=1pt](0,2) to [bend left=23 ](0.8,1.9);
        \draw[line width=1pt](0.8,1.9) to [bend left=23 ](1.5,1.5);
        \draw[line width=1pt](1.5,1.5) to [bend left=20 ](1.9,0.8);
        \draw[line width=1pt](1.9,0.8) to [bend left=20 ](2,0);
        \draw[line width=1pt](2,0) to [bend left=20 ](1.9,-0.8);
        \draw[line width=1pt](1.9,-0.8) to [bend left=20 ](1.5,-1.5);
        \draw[line width=1pt](1.5,-1.5) to [bend left=20 ](0.8,-1.9);
        \draw[line width=1pt](0.8,-1.9) to [bend left=20 ](0,-2);
        \draw[line width=1pt](0,-2) to [bend left=20 ](-0.8,-1.9);
        \draw[line width=1pt](-0.8,-1.9) to [bend left=20 ](-1.5,-1.5);
        \draw[line width=1pt](-0.8,1.9) to [bend left=20 ](-1.5,1.5);
        \draw[line width=1pt](0,2) to [bend left=20 ](-0.8,1.9);

        \draw[line width=1pt](0,2) - - (0,3);
        \draw[line width=1pt](0.8,1.9) - - (1.5,2.6);
        \draw[line width=1pt](1.5,1.5) - - (2.3,1.8);
        \draw[line width=1pt](1.9,0.8) - - (2.8,1);
        \draw[line width=1pt](2,0) - - (3,0);
        \draw[line width=1pt](1.9,-0.8) - - (2.8,-1);
        \draw[line width=1pt](1.5,-1.5) - - (2.2,-2);
        \draw[line width=1pt](0.8,-1.9) - - (1.3,-2.7);
        \draw[line width=1pt](0,-2) - - (0,-3);
        \draw[line width=1pt](-0.8,-1.9) - - (-1.3,-2.7);
        \draw[line width=1pt](-1.5,-1.5) - - (-2.2,-2);
        \draw[line width=1pt](-1.5,1.5) - - (-2.3,1.8);
        \draw[line width=1pt](-0.8,1.9) - - (-1.5,2.6);

		\node [draw, shape=circle,fill=green] (u1) at  (0,2) {};
        \node [draw, shape=circle,fill=red] (u2) at  (0.8,1.9) {};
		\node [draw, shape=circle,fill=green] (u3) at  (1.5,1.5) {};
        \node [draw, shape=circle,fill=red] (u4) at  (1.9,0.8) {};
        \node [draw, shape=circle,fill=pink] (u5) at  (2,0) {};
        \node [draw, shape=circle,fill=red] (u6) at  (1.9,-0.8) {};
        \node [draw, shape=circle,fill=pink] (u7) at  (1.5,-1.5) {};
        \node [draw, shape=circle,fill=green] (u8) at  (0.8,-1.9) {};
        \node [draw, shape=circle,fill=pink] (u9) at  (0,-2) {};
        \node [draw, shape=circle,fill=white] (u10) at  (-0.8,-1.9) {};
	    \node [draw, shape=circle,fill=white] (u11) at  (-1.5,-1.5) {};
        \node [draw, shape=circle,fill=white] (un-1) at  (-1.5,1.5) {};
        \node [draw, shape=circle,fill=white] (un) at  (-0.8,1.9) {};

        \node [draw, shape=circle,fill=red] (v1) at  (0,3) {};
        \node [draw, shape=circle,fill=green] (v2) at  (1.5,2.6) {};
        \node [draw, shape=circle,fill=red] (v3) at  (2.3,1.8) {};
        \node [draw, shape=circle,fill=pink] (v4) at  (2.8,1) {};
        \node [draw, shape=circle,fill=red] (v5) at  (3,0) {};
        \node [draw, shape=circle,fill=pink] (v6) at  (2.8,-1) {};
        \node [draw, shape=circle,fill=green] (v7) at  (2.2,-2) {};
        \node [draw, shape=circle,fill=pink] (v8) at  (1.3,-2.7) {};
        \node [draw, shape=circle,fill=green] (v9) at  (0,-3) {};
        \node [draw, shape=circle,fill=white] (v10) at  (-1.3,-2.7) {};
        \node [draw, shape=circle,fill=white] (v11) at  (-2.2,-2) {};
        \node [draw, shape=circle,fill=white] (vn-1) at  (-2.3,1.8) {};
		\node [draw, shape=circle,fill=white] (vn) at  (-1.5,2.6) {};

	\end{tikzpicture}
\caption{The prism graph of $D_n$ for odd $n\geq9$}\label{Fig-odd}
\end{figure}
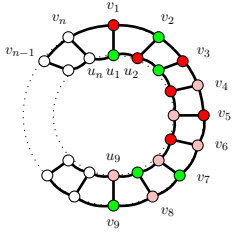

\end{proof}
%%%%%%%%%%%%%%%%%%%%%%%%%%%%%%%%%%%%%%%%%%%%%%%%%%%%%%%%%%%%%%5

\begin{theorem}\label{theo-P*C}
For any path $P_{m}$ and any cycle $C_{n}$, we have
\end{theorem}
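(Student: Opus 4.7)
The plan is to determine $\chi_{ei}(P_m\square C_n)$ by splitting into cases on the parity of $n$, since the crucial structural distinction is whether the cyclic factor is bipartite. I would label the vertices as $v_{i,j}$ with $1\le i\le m$ and $j\in\mathbb Z_n$, so that the $i$th row induces a copy of $C_n$ and the $j$th column induces a copy of $P_m$. The obvious lower bound $\chi_{ei}(P_m\square C_n)\ge 2$ holds whenever the graph contains any path $P_4$, which happens in every nontrivial case.

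For $n$ even with $n\ge 4$, the graph $P_m\square C_n$ is bipartite with the natural bipartition given by the parity of $i+j$. The key observation is that in any bipartite graph, the two end vertices of any path of odd length lie on opposite sides of the bipartition. Applying this to $P_4$ (length $3$), the $2$-coloring $f(v_{i,j})=(i+j)\bmod 2$ is automatically $e$-injective, so $\chi_{ei}(P_m\square C_n)=2$. The verification is immediate and mirrors the argument already used in Theorem~\ref{theo-P*P} for $P_m\square P_n$, with the only new ingredient being that $C_n$ remains bipartite for even $n$.

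For $n$ odd, the graph contains odd cycles and is not bipartite, so two colors no longer suffice. For $n\ge 9$ odd I would extend the block-periodic $3$-coloring from the prism graph analysis in Proposition~\ref{prop-dn} (case~5) across all $m$ rows, using the three sub-cases $n=3k$, $n=3k+2$, $n=3k+4$ to handle how the pattern wraps around the cycle, and phase-shifting between consecutive rows so that same-colored vertices in different rows never become $P_4$-endpoints. The small odd cases $n\in\{3,5,7\}$ would be handled separately, in analogy with $\chi_{ei}(D_3)=6$, $\chi_{ei}(D_5)=5$, $\chi_{ei}(D_7)=4$: one enumerates the $P_4$-endpoint classes in $S_3(P_m\square C_n)$, exhibits an explicit optimal coloring, and matches the lower bound by exhibiting a clique of the correct size in the three-step graph.

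The hardest part will be the small odd $n$ cases, especially $n=3$. There, each row contains a triangle, and the short cyclic distance combined with the $P_m$ direction creates many $P_4$-endpoint pairs, making both the lower-bound clique construction and the upper-bound explicit coloring delicate. A careful case distinction on $m$ (small $m$ vs.\ $m\ge$ some threshold) will likely be needed, together with an inductive or periodic pattern argument in the path coordinate. For the bipartite (even $n$) portion, by contrast, the proof is essentially one line, so the bulk of the work concentrates in the odd-cycle cases where the bipartite trick fails.
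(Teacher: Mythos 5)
Your proposal follows essentially the same route as the paper: the same five-way case split on $n$, the bipartite parity $2$-coloring for even $n$, lower bounds pulled from the prism subgraph $P_2\square C_n$, and diagonally shifted periodic colorings for odd $n$. The only caveat is that for $n\in\{3,5,7\}$ and odd $n\ge 9$ you leave the explicit colorings and the pairwise-$P_4$-endpoint (clique) lower bounds as a plan rather than verifying them; the paper supplies exactly these as concrete matrix patterns, and the case distinction on $m$ you anticipate for $n=3$ turns out to be unnecessary, since one periodic pattern (rows alternating $\{1,2,3\}$ and $\{4,5,6\}$) works for every $m\ge 2$.
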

$$\chi_{ei}({P_{m}\square C_{n}})=\begin{cases}
 6 & m\geq2, \ n=3 \\
 5 & m\geq2, \ n=5 \\
 4 & m\geq2, \ n=7 \\
 2 & m\geq2, \  2|n  \\
 3 & m\geq2, \  2\nmid n, \ (n\geq9) \\
\end{cases}.$$
\begin{proof}
Let $V(C_{n}^{i})=\{v_{i,1},v_{i,2},v_{i,3},\ldots,v_{i,n}\}$. There are four cases to be considered.

\textbf{Case 1}. Let $m\geq2, \ n=3$. There is a path of length $3$ between all vertices in $C_{3}^{i}$ and $C_{3}^{i+1}$. So $\chi_{ei}({P_{m}\square C_{3}})\geq6$.\\
 We construct an optimal $e$-injective coloring with $6$ colors of ${P_{m}\square C_{3}}$ using of pattern given in  Table \ref{T2}. For this, we define the function $f:V\rightarrow\{1,2,3,4,5,6\}$ as follows.\\
 $$f(v_{i,j})=\begin{cases}
 j & 2\nmid i \\
 3+j & 2|i \\
 \end{cases}.$$\\
It is obvious that, we need at  $6$ different colors for all vertices in $C_{3}^{i}$ and $C_{3}^{i+1}$. On the other hand, the vertex $v_{i+2,j}$
can be assigned with the color of $v_{i,j}$ since there is no path of length $3$ between them.
Therefore, it is observed that $\chi_{ei}({P_{m}\square C_{3}})=6$.
\begin{table}[h]
  \centering
  $\begin{pmatrix}
    1 & 2 & 3 \\
    4 & 5 & 6 \\
    1 & 2 & 3 \\
    4 & 5 & 6 \\
    \vdots & \vdots & \vdots \\
  \end{pmatrix}$
  \caption{Matrix $e$-injective coloring of ${P_{m}\square C_{3}}$  }\label{T2}
\end{table}

\textbf{Case 2}. Let $m\geq2, \ n=5$. According to the Cartesian product, there is a path of length $3$ between any two different vertices in $C_{5}^{i}$. So $\chi_{ei}({P_{m}\square C_{5}})\geq5$.\\
We now show, there is an $e$-injective coloring with $5$ colors for ${P_{m}\square C_{5}}$ such as the Table \ref{T3}. For this, we define a function $f:V \rightarrow \{1,2,3,4,5\}$ as $f(v_{i,j})=f(v_{i-1,j-1\ (\emph{mod}\ 5)})$ for $i\geq2$.
\begin{table}[h]
  \centering
  $\begin{pmatrix}
    1 & 2 & 3 & 4 & 5 \\
    5 & 1 & 2 & 3 & 4 \\
    4 & 5 & 1 & 2 & 3 \\
    3 & 4 & 5 & 1 & 2 \\
    2 & 3 & 4 & 5 & 1 \\
    \vdots & \vdots &\vdots & \vdots& \vdots
  \end{pmatrix}$
  \caption{Matrix $e$-injective coloring of ${P_{m}\square C_{5}}$  }\label{T3}
\end{table}

Since for any two successive cycles $C_{n}^{i}$ and $C_{n}^{i+1}$ of ${P_{m}\square C_{5}}$, there is not a path of length $3$ between $(v_{i,j})$ and $(v_{i+1,j+1\ (\emph{mod}\ 5)})$, we may assign same color to these two vertices.
Therefore, it is observed that $$\chi_{ei}({P_{m}\square C_{5}})=5.$$\\

\begin{table}[h]
  \centering
  $\begin{pmatrix}
    1 & 2 & 1 & 2 & 3 & 4 & 3 \\
    3 & 1 & 2 & 1 & 2 & 3 & 4 \\
    4 & 3 & 1 & 2 & 1 & 2 & 3 \\
    3 & 4 & 3 & 1 & 2 & 1 & 2 \\
    2 & 3 & 4 & 3 & 1 & 2 & 1 \\
    \vdots & \vdots&\vdots&\vdots&\vdots&\vdots&\vdots
  \end{pmatrix}$
  \caption{Matrix $e$-injective coloring of ${P_{m}\square C_{7}}$  }\label{T4}
\end{table}

\textbf{Case 3}.  Let $m\geq2, \ n=7$. The Cartesian product $P_{m}\square C_{7}$ contains copy of $D_{7}$ as a subgraph, so $\chi_{ei}({P_{m}\square C_{7}})\geq4$.\\
We now show that, there is an $e$-injective coloring with $4$ colors for ${P_{m}\square C_{7}}$ such as the Table \ref{T4} which defines a function $f:V \rightarrow \{1,2,3,4\}$ as follows.\\
 $$f(v_{1,j})=\begin{cases}
 1 & \textmd{if} \ j\in \{1, 3\}\\
 2 & \textmd{if} \ j\in \{2, 4\}\\
 3 & \textmd{if} \ j\in\{5,7\} \\
 4 & \textmd{if} \ j=6 \\
 \end{cases}$$\\
                        $$ and $$
 $$f(v_{i,j})=f(v_{i-1,j-1\ (mod\ 7)})$$\\
 Since for any two successive cycles $C_{7}^{i}$ and $C_{7}^{i+1}$ of ${P_{m}\square C_{7}}$, there is not a path of length $3$ between $(v_{i,j})$ and $(v_{i+1,j+1\ (\emph{mod}\ 7)})$, we may assign same color to these two vertices.
Therefore, it is observed that $$\chi_{ei}({P_{m}\square C_{7}})=4.$$

\textbf{Case 4}. Let $m\geq2, \  n \ \textmd{is} \ \textmd{evevn}$. The Cartesian product ${P_{m}\square C_{n}}$ contains copies of $P_{m}$ and $C_{n}$ as subgraphs, so $\chi_{ei}({P_{m}\square C_{n}})\geq 2$. \\
We construct an $e$-injective coloring with $2$ colors of ${P_{m}\square C_{n}}$ using of pattern given in the matrix in Table \ref{T5}. We define a function $f:V\rightarrow\{1,2\}$ as follows.\
$$f(v_{i,j})=\begin{cases}
 1 & 2|i+j  \\
 2 & 2\nmid i+j \\
 \end{cases}.$$\\
Obviously, there is no path of length $3$ between every pair of vertices with same color $1$ or with same color $2$. Therefore $\chi_{ei}({P_{m}\square C_{n}})=2$.\\
\begin{table}[h]
  \centering
  $\begin{pmatrix}
    1 & 2 & 1 & \dots & 2\\
    2 & 1 & 2 & \dots & 1\\
    1 & 2 & 1 & \dots & 2\\
    \vdots & \vdots & \vdots &\ldots & \vdots\\
  \end{pmatrix}$
  \caption{Matrix $e$-injective coloring of ${P_{m}\square C_{n}}$ }\label{T5}
\end{table}

\textbf{Case 5}.  Let $m\geq 2, \ n$ is odd and ($n\geq 9$). By theorem \ref{prop-dn}, we know that $\chi_{ei}({P_{m}\square C_{n}})\geq 3$.
We construct an optimal $e$-injective coloring with $3$ colors for ${P_{m}\square C_{n}}$. For this, we define a function $f:V\rightarrow \{1,2,3\}$ as follows.\\
 $$f(v_{1,j})=\begin{cases}
 1 &  \ j\in \{1, 3, 5\} \bigcup \{2k| k\geq 5\}\\
 2 &  \ j\in \{2, 7, 9\} \bigcup \{2k+1| k\geq 5\}\\
 3 &  \ j\in \{4, 6, 8\}\\
 \end{cases}$$
  and
$$f(v_{i,j})=f(v_{i-1,j-1\ (mod\ n)})$$\\
Obviously, there is no path of length $3$ between two vertices with the same color. Thus, we can see that $f$ is an $e$-injective coloring.
 Therefore $\chi_{ei}({P_{m}\square C_{n}})=3$.
\end{proof}

\subsection{Tori graphs}
In this subsection, our main purpose is to give the $e$-injective chromatic number for tori graph. For this, we begin with the special case.

\begin{theorem}\label{theo-C*C*}
For cycles $C_{m}$ and $C_{n}$, $(m=3$ and $n\in\{3, 5, 7\})$, we have

$$\chi_{ei}(C_{m}\square C_{n})=\begin{cases}
  9 & \textmd{if} \ m=n=3 \\
  15 & \textmd{if} \ m=3, \  n=5 \\
  12 & \textmd{if} \ m=3, \  n=7 \\
 \end{cases}$$
\end{theorem}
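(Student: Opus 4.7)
The plan is to apply the identity $\chi_{ei}(G)=\chi(S_{3}(G))$ from the introduction and analyse the three-step graph of $C_{3}\square C_{n}$ directly. Label the vertices by $(i,j)\in\mathbb{Z}_{3}\times\mathbb{Z}_{n}$. Since $C_{3}\square C_{n}$ is a Cayley graph on $\mathbb{Z}_{3}\times\mathbb{Z}_{n}$, so is $S_{3}(C_{3}\square C_{n})$, and it suffices to determine the connection set $D_{n}$ of displacements $(a,b)$ that arise as the net change of some $P_{4}$ starting at $(0,0)$. Every edge of $C_{3}\square C_{n}$ changes exactly one coordinate by $\pm 1$, so a length-$3$ walk uses $k\in\{0,1,2,3\}$ moves in the $i$-direction and $3-k$ in the $j$-direction; a finite case split on $k$ and on the sign pattern, together with discarding sequences that revisit a vertex, determines $D_{n}$ explicitly.

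For $m=n=3$ I would exhibit, for each of the eight non-zero displacements in $\mathbb{Z}_{3}\times\mathbb{Z}_{3}$, a single explicit $P_{4}$ witness -- for example $(0,0)(1,0)(1,1)(0,1)$ realises $(0,1)$, and $(0,0)(0,1)(0,2)(1,2)$ realises $(1,2)$. This shows $D_{3}=\mathbb{Z}_{3}\times\mathbb{Z}_{3}\setminus\{(0,0)\}$, so $S_{3}(C_{3}\square C_{3})=K_{9}$ and $\chi_{ei}=9$. The same tactic handles $m=3,n=5$: each of the fourteen non-zero displacements in $\mathbb{Z}_{3}\times\mathbb{Z}_{5}$ admits an explicit $P_{4}$ witness, so $S_{3}(C_{3}\square C_{5})=K_{15}$ and $\chi_{ei}=15$.

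The case $m=3,n=7$ is the substantive one. A short modular calculation shows that for certain displacements -- essentially $(0,\pm 2)$ and $(\pm 1,\pm 3)$ -- no choice of $k$ and signs yields the required $i$-net in $\mathbb{Z}_{3}$ and $j$-net in $\mathbb{Z}_{7}$ simultaneously, so these displacements are absent from $D_{7}$ and the corresponding vertex pairs may share a colour. Conversely, each of the remaining non-zero displacements is witnessed by an explicit $P_{4}$. The upper bound is then established by presenting an explicit colouring as a $3\times 7$ matrix of colour labels (analogous to Tables \ref{T2}--\ref{T4}) and verifying, class by class, that every pair of equally coloured vertices has difference outside $D_{7}$.

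The hardest part is the matching lower bound. The crude ratio bound $\chi\ge |V|/\alpha$ is not tight enough on its own because the independence number of $S_{3}(C_{3}\square C_{7})$ is very small and easily estimated by inspection of the missing displacements. To pin down the precise value one needs a structural obstruction, and I would look for a clique in $S_{3}(C_{3}\square C_{7})$ -- that is, a subset of $\mathbb{Z}_{3}\times\mathbb{Z}_{7}$ whose pairwise differences all lie in $D_{7}$ -- of the required size, constructed for instance by starting from two consecutive columns $\{(i,j):i\in\mathbb{Z}_{3},\ j\in\{0,1\}\}$ and extending greedily while checking the differences. Verifying that the resulting clique has the prescribed size is where the proof for $n=7$ substantively differs from the almost automatic cases $n\in\{3,5\}$, and it is the step most prone to error because of the small, translation-invariant constraint set defining $D_{7}$.
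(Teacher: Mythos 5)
Your reduction to the three-step Cayley graph is sound, and for $n\in\{3,5\}$ it correctly yields $S_3(C_3\square C_n)=K_{3n}$, hence the values $9$ and $15$; this is a rigorous version of what the paper does, since the paper's proof consists only of pointing at the figures and the $7\times 3$ colour matrix. Your determination of the connection set for $n=7$ is also correct: writing vertices as $(i,j)\in\mathbb{Z}_3\times\mathbb{Z}_7$, the pairs admitting no $P_4$ are exactly those with difference in $\overline{D}=\{(0,\pm2),(\pm1,\pm3)\}$ (for $(\pm1,\pm3)$ the two vertices are at distance $4$, so no walk of length $3$ exists at all; for $(0,\pm2)$ a three-step walk would need two $C_7$-steps of the same sign plus one $C_3$-step, which necessarily changes the first coordinate).

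The lower-bound step for $n=7$, however, cannot be carried out as you describe, and your own (correct) computation of $\overline{D}$ in fact refutes the stated value rather than confirming it. First, $S_3(C_3\square C_7)$ has no clique of size $12$: its complement $H$ is the Cayley graph on $\mathbb{Z}_3\times\mathbb{Z}_7$ with connection set $\overline{D}$, which is $6$-regular with $63$ edges, and an independent set of $H$ of size $s$ forces $63\le 6(21-s)$, i.e.\ $s\le 10$. So the object you plan to exhibit does not exist. Second, checking the $15$ pairwise differences of elements of $\overline{D}$ shows none lies in $\overline{D}$, so $H$ is triangle-free; hence every colour class has size at most $2$ and $\chi_{ei}(C_3\square C_7)\ge\lceil 21/2\rceil=11$ --- and $11$ is attained. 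Indeed $(1,4)=(1,-3)$ has additive order $21$, so the edges of $H$ with difference $\pm(1,-3)$ form a Hamiltonian $21$-cycle; taking $10$ alternate edges of this cycle pairs up $20$ vertices into legitimate colour classes of size $2$ (each pair is at distance $4$ in $C_3\square C_7$) and leaves one singleton, giving an $e$-injective colouring with $11$ colours. Thus $\chi_{ei}(C_3\square C_7)=11$, not $12$: the paper's matrix colouring (which exploits only the $(0,\pm2)$ non-adjacencies) is valid but not optimal, and no lower-bound argument for $12$ is, or could be, supplied. An honest completion of your programme should therefore end by correcting the third case of the theorem, not by searching for a $12$-clique.
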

\begin{proof}
According to the Figure \ref{Fig-car-dn} and Table \ref{T-11}, the proof is obvious.
\begin{figure}[h]
	\centering
		\begin{tikzpicture}[scale=.4, transform shape]
		\node [scale=1.3] at (0,1.1) {$u_{1}$};
        \node [scale=1.3] at (1.2,2.5) {$u_{2}$};
		\node [scale=1.3] at (0,3.9) {$u_{3}$};

        \node [scale=1.3] at (0.8,-0.7) {$v_{1}$};
        \node [scale=1.3] at (2,-2.3) {$v_{2}$};
		\node [scale=1.3] at (3.5,-1.9) {$v_{3}$};

        \node [scale=1.3] at (-0.8,-0.7) {$w_{1}$};
		\node [scale=1.3] at (-2.9,-0.4) {$w_{2}$};
		\node [scale=1.3] at (-3.5,-1.9) {$w_{3}$};

        \draw  (0,0) circle (1.5cm);
        \draw  (0,0) circle (2.5cm);
        \draw  (0,0) circle (3.5cm);

        \draw[line width=1pt](0,1.5) - - (0.7,2.4);
        \draw[line width=1pt](0.7,2.4) - - (0,3.5);
        \draw[line width=1pt](0,3.5) - - (0,1.5);
         \draw[line width=1pt](1.3,-0.8) - - (1.7,-1.9);
        \draw[line width=1pt](1.7,-1.9) - - (3,-1.8);
        \draw[line width=1pt](3,-1.8) - - (1.3,-0.8);
        \draw[line width=1pt](-1.3,-0.8) - - (-2.4,-0.5);
        \draw[line width=1pt](-2.4,-0.5) - - (-3,-1.8);
        \draw[line width=1pt](-3,-1.8) - - (-1.3,-0.8);
        \node [draw, shape=circle,fill=yellow] (v1) at  (0,1.5) {};
		\node [draw, shape=circle,fill=red] (u1) at  (0.7,2.4) {};
		\node [draw, shape=circle,fill=blue] (u1) at  (0,3.5) {};

	    \node [draw, shape=circle,fill=brown] (v2) at  (1.3,-0.8) {};
	    \node [draw, shape=circle,fill=magenta] (u2) at  (1.7,-1.9) {};
	    \node [draw, shape=circle,fill=teal] (v2) at  (3,-1.8) {};

        \node [draw, shape=circle,fill=green] (v3) at  (-1.3,-0.8) {};
		\node [draw, shape=circle,fill=pink] (u3) at  (-2.4,-0.5) {};
	    \node [draw, shape=circle,fill=gray] (v2) at  (-3,-1.8) {};

	\end{tikzpicture}
\qquad
	\centering
	\begin{tikzpicture}[scale=.4, transform shape]

		\node [scale=1.3] at (0,1.1) {$u_{1}$};
		\node [scale=1.3] at (1.3,2.5) {$u_{2}$};
		\node [scale=1.3] at (0,3.9) {$u_{3}$};

		\node [scale=1.3] at (1,0.5) {$v_{1}$};
        \node [scale=1.3] at (3,0) {$v_{2}$};
        \node [scale=1.3] at (3.8,1.3) {$v_{3}$};

        \node [scale=1.3] at (0.6,-0.9) {$w_{1}$};
		\node [scale=1.3] at (1.2,-2.6) {$w_{2}$};
		\node [scale=1.3] at (3.4,-2.1) {$w_{3}$};

        \node [scale=1.3] at (-0.6,-0.9) {$s_{1}$};
		\node [scale=1.3] at (-2.9,-0.7) {$s_{2}$};
		\node [scale=1.3] at (-3.3,-2) {$s_{3}$};

        \node [scale=1.3] at (-0.8,0.5) {$m_{1}$};
		\node [scale=1.3] at (-2,2.3) {$m_{2}$};
		\node [scale=1.3] at (-3.9,1.3) {$m_{3}$};

        \draw  (0,0) circle (1.5cm);
        \draw  (0,0) circle (2.5cm);
        \draw  (0,0) circle (3.5cm);

        \draw[line width=1pt](0,1.5) - - (0.8,2.4);
        \draw[line width=1pt](0.8,2.4) - - (0,3.5);
        \draw[line width=1pt](0,3.5) - - (0,1.5);
        \draw[line width=1pt](1.4,0.5) - - (3.3,1.3);
        \draw[line width=1pt](3.3,1.3) - - (2.5,0);
        \draw[line width=1pt](2.5,0) - - (1.4,0.5);
        \draw[line width=1pt](1.1,-1) - - (2.8,-2);
        \draw[line width=1pt](2.8,-2) - - (1.3,-2.2);
        \draw[line width=1pt](1.3,-2.2) - - (1.1,-1);
       \draw[line width=1pt](-1.1,-1) - - (-2.8,-2);
        \draw[line width=1pt](-2.8,-2) - - (-2.4,-0.7);
        \draw[line width=1pt](-2.4,-0.7) - - (-1.1,-1);
       \draw[line width=1pt](-1.4,0.5) - - (-3.3,1.3);
        \draw[line width=1pt](-3.3,1.3) - - (-1.8,1.8);
        \draw[line width=1pt](-1.8,1.8) - - (-1.4,0.5);
        \node [draw, shape=circle,fill=yellow] (u1) at  (0,1.5) {};
        \node [draw, shape=circle,fill=red] (u2) at  (0.8,2.4) {};
        \node [draw, shape=circle,fill=blue] (u3) at  (0,3.5) {};

		\node [draw, shape=circle,fill=brown] (v1) at  (1.4,0.5) {};
		\node [draw, shape=circle,fill=magenta] (v2) at  (2.5,0) {};
		\node [draw, shape=circle,fill=teal] (v3) at  (3.3,1.3) {};

	    \node [draw, shape=circle,fill=green] (w1) at  (1.1,-1) {};
	    \node [draw, shape=circle,fill=pink] (w2) at  (1.3,-2.2) {};
	    \node [draw, shape=circle,fill=gray] (w3) at  (2.8,-2) {};

        \node [draw, shape=circle,fill=orange] (s1) at  (-1.1,-1) {};
		\node [draw, shape=circle,fill=cyan] (s2) at  (-2.4,-0.7) {};
		\node [draw, shape=circle,fill=violet] (s3) at  (-2.8,-2) {};

        \node [draw, shape=circle,fill=black] (m1) at  (-1.4,0.5) {};
        \node [draw, shape=circle,fill=olive] (m2) at  (-3.3,1.3) {};
		\node [draw, shape=circle,fill=purple] (m3) at  (-1.8,1.8) {};

	\end{tikzpicture}
\qquad
    \centering
	\begin{tikzpicture}[scale=.4, transform shape]

		\node [scale=1.3] at (0,1.1) {$u_{1}$};
		\node [scale=1.3] at (1.3,2.5) {$u_{2}$};
		\node [scale=1.3] at (0,3.9) {$u_{3}$};

		\node [scale=1.3] at (0.6,0.9) {$v_{1}$};
		\node [scale=1.3] at (2.9,0.7) {$v_{2}$};
		\node [scale=1.3] at (3.2,2.1) {$v_{3}$};

        \node [scale=1.3] at (1,-0.1) {$w_{1}$};
        \node [scale=1.3] at (2.9,-1.1) {$w_{2}$};
		\node [scale=1.3] at (4,-0.2) {$w_{3}$};

        \node [scale=1.3] at (0.6,-0.8) {$s_{1}$};
        \node [scale=1.3] at (0.6,-2.8) {$s_{2}$};
        \node [scale=1.3] at (2.9,-2.5) {$s_{3}$};

        \node [scale=1.3] at (-0.5,-0.9) {$m_{1}$};
        \node [scale=1.3] at (-2,-2.2) {$m_{2}$};
        \node [scale=1.3] at (-0.9,-3.8) {$m_{3}$};

        \node [scale=1.3] at (-1,-0.1) {$n_{1}$};
        \node [scale=1.3] at (-3,0.4) {$n_{2}$};
        \node [scale=1.3] at (-3.9,-0.9) {$n_{3}$};

		\node [scale=1.3] at (-0.6,0.9) {$k_{1}$};
		\node [scale=1.3] at (-1.9,2.4) {$k_{2}$};
		\node [scale=1.3] at (-3.4,2) {$k_{3}$};

        \draw  (0,0) circle (1.5cm);
        \draw  (0,0) circle (2.5cm);
        \draw  (0,0) circle (3.5cm);

        \draw[line width=1pt](0,1.5) - - (0.8,2.4);
        \draw[line width=1pt](0.8,2.4) - - (0,3.5);
        \draw[line width=1pt](0,3.5) - - (0,1.5);

        \draw[line width=1pt](1.1,0.9) - - (2.7,2.1);
        \draw[line width=1pt](2.7,2.1) - - (2.4,0.8);
        \draw[line width=1pt](2.3,0.8) - - (1.1,0.9);

        \draw[line width=1pt](1.5,-0.2) - - (2.3,-1);
        \draw[line width=1pt](2.3,-1) - - (3.5,-0.2);
        \draw[line width=1pt](3.5,-0.2) - - (1.5,-0.2);

        \draw[line width=1pt](0.8,-1.2) - - (0.7,-2.4);
        \draw[line width=1pt](0.7,-2.4) - - (2.4,-2.5);
        \draw[line width=1pt](2.4,-2.5) - - (0.8,-1.2);

        \draw[line width=1pt](-0.6,-1.3) - - (-1.7,-1.8);
        \draw[line width=1pt](-1.7,-1.8) - - (-1,-3.4);
        \draw[line width=1pt](-1,-3.4) - - (-0.6,-1.3);

        \draw[line width=1pt](-1.5,-0.2) - - (-2.5,0.3);
        \draw[line width=1pt](-2.5,0.3) - - (-3.4,-0.9);
        \draw[line width=1pt](-3.4,-0.9) - - (-1.5,-0.2);

        \draw[line width=1pt](-1.1,0.9) - - (-1.5,2);
        \draw[line width=1pt](-1.5,2) - - (-3,1.7);
        \draw[line width=1pt](-3,1.7) - - (-1.1,0.9);
        \node [draw, shape=circle,fill=yellow] (u1) at  (0,1.5) {};
		\node [draw, shape=circle,fill=red] (u2) at  (0.8,2.4) {};
        \node [draw, shape=circle,fill=blue] (u3) at  (0,3.5) {};

		\node [draw, shape=circle,fill=brown] (v1) at  (1.1,0.9) {};
        \node [draw, shape=circle,fill=magenta] (v3) at  (2.4,0.8) {};
		\node [draw, shape=circle,fill=teal] (v2) at  (2.7,2.1) {};

		\node [draw, shape=circle,fill=yellow] (w1) at  (1.5,-0.2) {};
        \node [draw, shape=circle,fill=red] (w2) at  (2.3,-1) {};
        \node [draw, shape=circle,fill=blue] (w3) at  (3.5,-0.2) {};

	    \node [draw, shape=circle,fill=brown] (s1) at  (0.8,-1.2) {};
	    \node [draw, shape=circle,fill=magenta] (s2) at  (0.7,-2.4) {};
	    \node [draw, shape=circle,fill=teal] (s3) at  (2.4,-2.5) {};
	
        \node [draw, shape=circle,fill=green] (m1) at  (-0.6,-1.3) {};
		\node [draw, shape=circle,fill=pink] (m2) at  (-1.7,-1.8) {};
		\node [draw, shape=circle,fill=gray] (m3) at  (-1,-3.4) {};

        \node [draw, shape=circle,fill=orange] (n1) at  (-1.5,-0.2) {};
        \node [draw, shape=circle,fill=cyan] (n2) at  (-2.5,0.3) {};
        \node [draw, shape=circle,fill=violet] (n3) at  (-3.4,-0.9) {};

        \node [draw, shape=circle,fill=green] (k1) at  (-1.1,0.9) {};
        \node [draw, shape=circle,fill=pink] (k2) at  (-1.5,2) {};
        \node [draw, shape=circle,fill=gray] (k3) at  (-3,1.7) {};

	\end{tikzpicture}
\caption{{The graph of $C_3\square C_n $ for $n\in \{3,  5,7\}$}}\label{Fig-car-dn}
\end{figure}
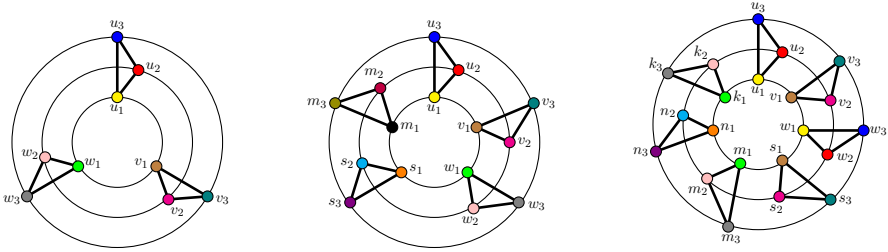

\begin{table}[h]
  \centering
  $\begin{pmatrix}
    1 & 2 & 3\\
    4 & 5 & 6 \\
    1 & 2 & 3 \\
    4 & 5 & 6 \\
    7 & 8 & 9 \\
    10 & 11 & 12\\
    7 & 8 & 9\\
  \end{pmatrix}$
  \caption{Matrix $e$-injective coloring of ${C_{3}\square C_{7}}$  }\label{T-11}
\end{table}
\end{proof}

\begin{theorem}\label{theo-C*C}
For cycles $C_{m}$ and $C_{n}$, $(m=3$ and $n$\ is\ even$)$, we have $\chi_{ei}(C_{3}\square C_{n})=6$.
\end{theorem}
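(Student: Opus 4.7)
My plan is to establish the equality by proving the lower bound $\chi_{ei}(C_3 \square C_n) \ge 6$ and then exhibiting an explicit $e$-injective $6$-coloring. For the lower bound, I would observe that for $n \ge 4$ the torus $C_3 \square C_n$ contains the triangular prism $D_3 = C_3 \square P_2$ as an induced subgraph, obtained by restricting to any two consecutive rows in the $C_n$-direction. Since every path $P_4$ in a subgraph remains a path $P_4$ in the ambient graph, an $e$-injective coloring of $C_3 \square C_n$ restricts to an $e$-injective coloring of $D_3$, giving $\chi_{ei}(C_3 \square C_n) \ge \chi_{ei}(D_3) = 6$ by Proposition \ref{prop-dn}.

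For the upper bound, label the vertices by $v_{i,j}$ with $i \in \mathbb{Z}_n$ and $j \in \{1,2,3\}$, and reuse the period-two pattern that worked for $P_m \square C_3$ in Case $1$ of Theorem \ref{theo-P*C}:
$$f(v_{i,j}) = \begin{cases} j & \text{if } i \text{ is odd,} \\ j+3 & \text{if } i \text{ is even.} \end{cases}$$
Because $n$ is even, identifying row $n+1$ with row $1$ is compatible with this parity pattern, so $f$ is well-defined on the torus and uses exactly the six colors $\{1,2,3,4,5,6\}$.

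The only non-routine step is to verify that $f$ is $e$-injective. Two vertices $v_{i_1,j_1}$ and $v_{i_2,j_2}$ share a color iff $j_1 = j_2$ and $i_1 \equiv i_2 \pmod{2}$; I would rule out a length-$3$ path between any such pair by a parity count. Suppose a walk of length $3$ uses $a$ edges in the $C_n$-direction and $b = 3-a$ in the $C_3$-direction. Its net vertical displacement has parity $a \pmod{2}$, and its net horizontal displacement (viewed as an integer, hence also modulo $3$) has parity $b \pmod{2}$ and absolute value at most $b$. Enforcing $j_1 = j_2$, i.e.\ the horizontal displacement $\equiv 0 \pmod{3}$, restricts $a$ to $\{0,1,3\}$; enforcing $i_1 \equiv i_2 \pmod{2}$ (a condition which is unambiguous modulo $n$ precisely because $n$ is even) restricts $a$ to $\{0,2\}$. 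The sole common value is $a = 0$, forcing the walk to consist entirely of horizontal edges and hence $i_1 = i_2$; together with $j_1 = j_2$, this reduces to the trivial equal-vertex case. Consequently, no monochromatic pair is spanned by a $P_4$.

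The main obstacle is just the parity bookkeeping, and the critical use of the hypothesis ``$n$ even'' appears twice: it makes row-parity well-defined modulo $n$, and it makes the period-two coloring wrap around consistently. Once the two parity constraints above are isolated, their incompatibility is immediate, completing the upper bound and hence the equality $\chi_{ei}(C_3 \square C_n) = 6$.
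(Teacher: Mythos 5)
Your proof is correct and follows essentially the same route as the paper's: the lower bound rests on the six vertices of two consecutive triangular fibers being pairwise joined by paths $P_4$ (the paper argues this directly; you route it through the prism $D_3$ and Proposition \ref{prop-dn}, which is the same substance), and the upper bound uses the identical alternating $(1,2,3)/(4,5,6)$ row pattern of Table \ref{T12}. Your parity bookkeeping for verifying the coloring is more systematic than the paper's single observation that $v_{i,j}$ and $v_{i+2,j}$ are not the ends of a $P_4$, but the argument is the same.
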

\begin{proof}
Let $V(C_{3}^{i})=\{v_{i,1},v_{i,2},v_{i,3}\}$. There is a path of length $3$ between any two different vertices in $C_{3}^{i}$ and $C_{3}^{i+1}$, so $\chi_{ei}({C_{3}\square C_{n}})\geq6$.\\
 We show an $e$-injective coloring with $6$ colors of ${C_{3}\square C_{n}}$ in Table \ref{T12}. There is no path of length $3$ between two vertices $v_{i,j}$ and  $v_{i+2,j}$, so two vertices $v_{i,j}$ and $v_{i+2,j}$ can take the same color. Therefore $\chi_{ei}({C_{m}\square C_{n}})=6$.
\begin{table}[h]
  \centering
  $\begin{pmatrix}
    1 & 2 & 3\\
    4 & 5 & 6 \\
    1 & 2 & 3 \\
    4 & 5 & 6\\
    \vdots & \vdots & \vdots
  \end{pmatrix}$
  \caption{Matrix $e$-injective coloring of ${C_{3}\square C_{n}}$, $(n \ \textmd{is}\ \textmd{even})$} \label{T12}
\end{table}

\end{proof}

\begin{theorem}\label{theo-C*C}
For cycles $C_{m}$ and $C_{n}$, ($m=3$ and n\ is\ odd\, ($n\geq9$) ), we have $\chi_{ei}(C_{3}\square C_{n})=9$.
\end{theorem}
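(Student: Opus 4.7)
The plan is to establish $\chi_{ei}(C_3 \square C_n) = 9$ in two parts: an explicit $9$-coloring (upper bound) and a column-triple forcing argument that critically uses the parity of $n$ (lower bound). Throughout, label the three $C_3$-layers by $i \in \{1, 2, 3\}$, write $v_{i,j}$ for the vertex at layer $i$ and position $j \in \mathbb{Z}_n$, and $C(j) = (c_{1,j}, c_{2,j}, c_{3,j})$ for the ordered color triple of column $j$.

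For the upper bound, I would first observe that inside a single $C_3$-layer the only pairs of vertices forming the endpoints of a $P_4$ in $C_3 \square C_n$ are at $C_n$-distances $\pm 1$ and $\pm 3$, so any $e$-injective coloring restricts to a proper coloring of the circulant graph $C_n(1, 3)$. I would establish $\chi(C_n(1, 3)) = 3$ for every odd $n \ge 9$ by an explicit construction: for $n = 9$ the cyclic pattern $(1, 2, 3, 2, 3, 1, 3, 1, 2)$ works, and for larger odd $n$ I would insert length-$2$ ``repeat'' blocks into this pattern with a short case split on $n \bmod 6$. Assigning the three layers the disjoint palettes $\{1, 2, 3\}$, $\{4, 5, 6\}$, $\{7, 8, 9\}$ and using a $3$-coloring of $C_n(1, 3)$ inside each then gives a valid $9$-coloring: the only cross-layer $P_4$-endpoints have displacement $(\pm 1, c)$ with $c \in \{0, \pm 1, \pm 2\}$ and are automatically separated because the palettes are disjoint.

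For the lower bound, the $P_4$-displacement enumeration yields, for all $j \in \mathbb{Z}_n$, the four column-level constraints: \emph{(i)} the entries of $C(j)$ are distinct; \emph{(ii)} $C(j) \cap C(j+1) = \emptyset$; \emph{(iii)} if $c_{i,j} = c_{i', j+2}$ then $i = i'$; \emph{(iv)} $c_{i, j} \neq c_{i, j+3}$ for every $i$. If the coloring uses only $k$ colors then, by (ii) and (iii), any color in $C(j+2) \setminus C(j)$ must lie in $[k] \setminus (C(j) \cup C(j+1))$, a set of size $k - 6$, so at most $k - 6$ row-positions can change going from $C(j)$ to $C(j+2)$. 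For $k = 6$ this forces $C(j+2) = C(j)$ as ordered triples, i.e., the sequence $C$ is $2$-periodic in $j$; combined with $C(j+n) = C(j)$ this contradicts the odd parity of $n$, so $k \ge 7$. I would then rule out $k \in \{7, 8\}$ by tracking the ``mutation pattern''---the rows that change at each $+2$-step---along the single $n$-cycle of columns produced by the $+2$-map (a single cycle because $\gcd(2, n) = 1$), and combining constraint (iv) with the $3$-chromaticity of $C_n(1, 3)$ to extract a parity incompatibility with odd $n$.

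The main obstacle is the case analysis for $k \in \{7, 8\}$. In each case the successive mutations in a fixed row trace a closed walk in $C_n(1, 3)$ along the $n$-cycle of columns, and this walk must both return to its starting color and respect (iv) at every distance-$3$ jump. My plan is to split into sub-cases by which row is mutated at each $+2$-step, convert the row-wise color changes into a parity counting argument on $\mathbb{Z}_n$, and show that no such assignment closes up for odd $n$. If the bookkeeping proves too intricate, a back-up route is to exhibit, for each odd $n \ge 9$, an explicit induced subgraph of $S_3(C_3 \square C_n)$ of chromatic number $9$, bypassing the mutation analysis entirely.
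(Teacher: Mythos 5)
Your constraint analysis is correct and is in fact sharper than the paper's own treatment: the four conditions (i)--(iv) are exactly the $P_4$-endpoint constraints in $C_3\square C_n$ for $n\ge 9$, your upper-bound construction (disjoint palettes per $C_3$-layer, each layer properly coloring the circulant $C_n(1,3)$) is essentially the transpose of the paper's Table 10 pattern and works for the same reason, and your $k=6$ argument (at most $k-6=0$ rows can mutate per $+2$-step, hence $2$-periodicity of the ordered column triples, hence constancy since $\gcd(2,n)=1$, contradicting disjointness of adjacent columns) is a clean and complete proof that $6$ colors fail. The paper's lower bound, by contrast, simply declares that columns $1,2$ get $\{1,2,3\},\{4,5,6\}$ and that column $n$ is then forced to use three new colors; this tacitly assumes every column is colored by a whole triple from a fixed partition of the palette, which is precisely what needs proof. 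Your framework is the right way to make that rigorous.

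However, the proof is not finished: the cases $k=7$ and $k=8$ are where the entire difficulty lives, and you leave them as a plan (``split into sub-cases\dots show that no such assignment closes up'') with an admitted fallback. This is a genuine gap, not a routine verification. Note in particular that constraint (ii) alone does \emph{not} force $9$ colors: a sequence of pairwise-disjoint-when-adjacent $3$-subsets of a $7$-element palette around $C_n$ is a closed walk of length $n$ in the Kneser graph $K(7,3)$, whose odd girth is $7$, so such sequences exist for every odd $n\ge 9$. Hence any correct elimination of $k=7$ (and $k=8$, where up to two rows may mutate per step and the replacement colors are no longer forced) must genuinely invoke (iii) and (iv), and your parity/mutation bookkeeping has to be carried out in full before the lower bound $\chi_{ei}(C_3\square C_n)\ge 9$ can be claimed. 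The proposed counting backup also does not obviously generalize beyond $n=9$ (for $n=9$ one can check $\alpha(S_3(C_3\square C_9))=3$ and conclude $\chi\ge 27/3=9$, but already for $n=11$ the independence number is at least $4$ and the ratio bound drops below $9$ only barely, and for larger $n$ it fails outright). As it stands, the proposal establishes $\chi_{ei}(C_3\square C_n)\le 9$ and $\ge 7$, not the stated equality.
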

\begin{proof}
On the contrary, let we have an optimal $e$-injective coloring with less than $9$ colors. According to the Proposition \ref{prop-dn}, we need to have at least $6$ colors. We assign the colors $1,2,3$ to the vertices in $C_{3}^{1}$ and the colors $4,5,6$ to the vertices in $C_{3}^{2}$, respectively. On the other hand,  since $n$ is odd, and there is  a path of length $3$ between the vertices in $C_{3}^{n}$ with the vertices in $C_{3}^{1}$, $C_{3}^{n-1}$, so have to assign the colors $7,8,9$ to the vertices in $C_{3}^{n}$ if the vertices in $C_{3}^{n-1}$ are assigned with colors $4,5,6$ so far. This contradicts our hypothesis that $\chi_{ei}({C_{3}\square C_{n}})<9$. We conclude that $\chi_{ei}({C_{3}\square C_{n}})\geq9$.\\
We show an $e$-injective coloring with $9$ colors of ${C_{3}\square C_{n}}$ in Table \ref{T13}. This coloring is an $e$-injective coloring, because there is no path of length $3$ between any two vertices with same color. Therefore $\chi_{ei}({C_{3}\square C_{n}})=9$.\\
\begin{table}[h]
  \centering
  $\begin{pmatrix}
    1 & 2 & 3\\
    4 & 5 & 6 \\
    7 & 8 & 9 \\
    4 & 5 & 6 \\
    7 & 8 & 9 \\
    1 & 2 & 3\\
    7 & 8 & 9 \\
    1 & 2 & 3\\
    4 & 5 & 6 \\
    \vdots & \vdots & \vdots
  \end{pmatrix}$
  \caption{Matrix $e$-injective coloring of ${C_{3}\square C_{n}}$, $(n \ \textmd{is}\ \textmd{odd},\ n\geq9)$}\label{T13}
\end{table}

\end{proof}

\begin{theorem}\label{theo-C*C}
For cycles $C_{m}$ and $C_{n}$, $(m=5$ and $n\geq4)$, we have $\chi_{ei}(C_{5}\square C_{n})=5$.
\end{theorem}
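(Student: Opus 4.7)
I plan to show that the five vertices $v_{1,1}, \ldots, v_{1,5}$ forming the copy $C_5^1$ are pairwise endpoints of a $P_4$ in $C_5 \square C_n$, so $\{v_{1,1}, \ldots, v_{1,5}\}$ is a clique of size~$5$ in the three-step graph $S_3(C_5 \square C_n)$ and therefore $\chi_{ei}(C_5 \square C_n) \geq 5$. If $v_{1,j}$ and $v_{1,k}$ are at $C_5$-distance $2$, I exhibit the $P_4$ going the long way around the cycle $C_5^1$ itself. If they are at $C_5$-distance $1$, I use $n \geq 2$ to form the $P_4$ $v_{1,j} - v_{2,j} - v_{2,k} - v_{1,k}$ (a Hamilton path of the $4$-cycle sitting in $C_5 \square C_n$).

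\textbf{Upper bound.} The plan is to construct an $e$-injective $5$-coloring of the form $f(v_{i,j}) = (a(i) + j) \bmod 5$, where $a : \{1, \ldots, n\} \to \mathbb{Z}_5$ is built from a step sequence $d_1, \ldots, d_n \in \{+1, -1\}$ via $a(1) = 0$ and $a(i+1) - a(i) \equiv d_i \pmod 5$. For cyclic consistency $a(n+1) = a(1)$ I impose $\sum_{i=1}^n d_i \equiv 0 \pmod 5$; such a sequence exists for every $n \geq 4$ by choosing the count of $+1$'s to be any $k \in \{0, 1, \ldots, n\}$ with $k \equiv 3n \pmod 5$. The crucial feature of the choice $d_i \in \{\pm 1\}$ is that partial sums $\sum_{s=0}^{\ell-1} d_{i+s}$ for $\ell = 1, 2, 3$ take values in $\{\pm 1\}$, $\{-2, 0, 2\}$, and $\{\pm 1, \pm 3\}$ respectively.

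To verify $e$-injectivity, I will enumerate the canonical displacement set
\[
T = \{(0, \pm 1),\; (0, \pm 3),\; (\pm 1, 0),\; (\pm 1, \pm 2),\; (\pm 2, \pm 1),\; (\pm 3, 0)\}
\]
of $P_4$-endpoint pairs in $C_5 \square C_n$, obtained by classifying a length-$3$ path according to how many of its edges lie in the $C_n$-factor versus the $C_5$-factor. For each $(\Delta i, \Delta j) \in T$ the color difference of a matching pair is $(a(i + \Delta i) - a(i)) + \Delta j \pmod 5$. Using the partial-sum bounds above together with $\Delta j \in \{0, \pm 1, \pm 2, \pm 3\}$, a case check shows this difference is nonzero modulo $5$ in every instance. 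Wrap-around displacements, where $\Delta i$ in labels is close to $n$, reduce via the identity $a(i + (n - k)) - a(i) = -\sum_{s=1}^{k} d_{i-s}$ to the very same congruence checks.

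The main obstacle is handling the small cases $n \in \{4, 5, 6\}$, where the cycle $C_n$ is short enough that essentially every residue of $\mathbb{Z}_n$ is hit by some canonical $T$-displacement; for these values of $n$ I would need to inspect the reduced displacement set $T_n$ carefully against the corresponding $d_i$-sums (for example, $d = (1, 1, -1, -1)$ works for $n = 4$, the all-ones choice works for $n = 5$, and $d = (1, 1, 1, -1, -1, -1)$ works for $n = 6$). For $n \geq 7$ the residues $\pm 1, \pm 2, \pm 3$ remain pairwise distinct in $\mathbb{Z}_n$, so the verification follows directly from the partial-sum analysis and no further casework is required.
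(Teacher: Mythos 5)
Your proof is correct, and the upper bound takes a genuinely different (and more robust) route than the paper's. For the lower bound you show directly that the five vertices of one $C_5$-fiber are pairwise $P_4$-endpoints (going the long way around $C_5$ for distance-$2$ pairs, and through the neighbouring fiber via a $4$-cycle for adjacent pairs); the paper gets the same bound by quoting $\chi_{ei}(D_5)=5$ for the prism $D_5=C_5\square P_2$, whose proof rests on the same observation, so these are essentially equivalent. For the upper bound the paper just displays two colour matrices (period-two rows for $n$ even, a constant cyclic shift per row for $n$ odd), while you use the parametrized family $f(v_{i,j})=a(i)+j \bmod 5$ with increments $d_i=\pm1$ satisfying $\sum_i d_i\equiv 0\pmod 5$. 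Two remarks. First, your verification can be compressed: under your colouring \emph{every} edge of $C_5\square C_n$ changes the colour by $\pm1\pmod 5$ (column edges because adjacent vertices of $C_5$ differ by $\pm1$ mod $5$; row edges, including the wrap-around one, because $a$ changes by $d_i=\pm1$ thanks to $\sum_i d_i\equiv 0$), so the endpoints of any $3$-edge path differ by an odd integer of absolute value at most $3$, which is never $0$ mod $5$. This one observation subsumes the entire case check over $T$ and shows that the small cases $n\in\{4,5,6\}$ need no separate treatment beyond exhibiting a valid step sequence, which you do. Second, your cyclic-consistency condition $\sum_i d_i\equiv 0\pmod 5$ is precisely what the paper's odd-$n$ table lacks: a constant shift of one per row is cyclically consistent only when $5\mid n$, and for instance at $n=9$ that table assigns the same colour to $v_{9,1}$ and $v_{1,3}$ even though $v_{9,1}v_{9,2}v_{9,3}v_{1,3}$ is a $P_4$; your construction therefore establishes the claimed equality uniformly for all $n\ge 4$, where the paper's displayed colouring does not.
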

\begin{proof}
Since the Cartesian product ${C_{5}\square C_{n}}$ contains ${C_{5}\square P_{2}}$ as a subgraph, so $\chi_{ei}({C_{5}\square C_{n}})\geq5$ (see Proposition \ref{prop-dn}).\\
We construct an $e$-injective coloring with $5$ colors of ${C_{5}\square C_{n}}$, (see Table \ref{T14}\ ($n \ \textmd{is}\ \textmd{even}$) and Table \ref{T15}\ ($n \ \textmd{is}\ \textmd{odd}$)).\\
 In Table \ref{T15}, there is  no path of length $3$ between any two vertices $v_{1,j}$ and $v_{2,j+1(\emph{mod}\ 5)}$ so we assign the same color to these two vertices in $C_{5}^{1}$ and $C_{5}^{2}$. It is clear that, this matrix is an $e$-injective coloring of the $({C_{5}\square C_{n}})$ when  n is odd.\\
  In Table \ref{T14},  there is  no path of length $3$ between any two vertices $v_{i,j}$ and $v_{i+2,j}$ in $C_{5}\square C_{n}$, (n is even), so two vertices $v_{i,j}, v_{i+2,j}$ can take the same color.\\
  Therefore $\chi_{ei}({C_{5}\square C_{n}})=5$.\\
\begin{table}[h]
  \centering
  $\begin{pmatrix}
    1 & 2 & 3 & 4 & 5\\
    5 & 1 & 2 & 3 & 4\\
    1 & 2 & 3 & 4 & 5\\
    5 & 1 & 2 & 3 & 4\\
   \vdots & \vdots & \vdots & \vdots & \vdots
  \end{pmatrix}$
  \caption{Matrix $e$-injective coloring of ${C_{5}\square C_{n}}$ ($n \ \textmd{is}\ \textmd{even}$)}\label{T14}
\end{table}
\begin{table}[h]
  \centering
  $\begin{pmatrix}
    1 & 2 & 3 & 4 & 5\\
    5 & 1 & 2 & 3 & 4\\
    4 & 5 & 1 & 2 & 3\\
    3 & 4 & 5 & 1 & 2\\
    2 & 3 & 4 & 5 & 1\\
   \vdots & \vdots & \vdots & \vdots & \vdots
  \end{pmatrix}$
  \caption{Matrix $e$-injective coloring of ${C_{5}\square C_{n}}$ ($n \ \textmd{is}\ \textmd{odd}$)}\label{T15}
\end{table}

\end{proof}

\begin{theorem}\label{theo-C*C}
For cycles $C_{m}$ and $C_{n}$, $(n=7, m\geq4\ (m\neq5))$, we have $\chi_{ei}(C_{m}\square C_{7})=4$
\end{theorem}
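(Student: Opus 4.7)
The plan is to match a lower bound of $4$ with an explicit $4$-coloring. The lower bound is immediate from Theorem~\ref{theo-P*C}: since $P_m$ is a spanning subgraph of $C_m$, the product $P_m \square C_7$ is a spanning subgraph of $C_m \square C_7$, and every $P_4$ of the subgraph remains a $P_4$ in the supergraph. Consequently $S_3(P_m \square C_7)$ is a subgraph of $S_3(C_m \square C_7)$, so
$$\chi_{ei}(C_m \square C_7) \ge \chi_{ei}(P_m \square C_7) = 4.$$

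For the upper bound I would exhibit a $4$-coloring in matrix form, in the style of the tables appearing in Theorem~\ref{theo-P*C}. First, I would classify the $P_4$'s of $C_m \square C_7$: since each edge changes exactly one coordinate by $\pm 1$, a $P_4$ between $v_{i,j}$ and $v_{i',j'}$ consists of three unit moves, and the requirement that the four visited vertices be distinct forces the displacement $(i'-i \bmod m,\, j'-j \bmod 7)$ to lie in the finite list
$$(\pm 1,0),\ (0,\pm 1),\ (\pm 1,\pm 2),\ (\pm 2,\pm 1),\ (\pm 3,0),\ (0,\pm 3).$$
A $4$-coloring $f$ is valid precisely when no two cells at any such displacement share a color.

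The natural starting point is the coloring used in Theorem~\ref{theo-P*C}: base row $(1,2,1,2,3,4,3)$ and $f(v_{i,j}) = f(v_{i-1,\,j-1 \bmod 7})$. This pattern has period $7$ in the row direction, so it transports verbatim to $C_m \square C_7$ whenever $7 \mid m$. For every other $m \ge 4$ with $m \ne 5$, I would split on $m \bmod 7$ and for each residue present an explicit $m \times 7$ color matrix, built from the diagonal-shift pattern on most rows together with a small number of ``boundary rows'' whose entries are chosen so that the cyclic adjacencies, the diagonal $P_4$'s of the form $(i,j)\sim(i\pm 1,j\pm 2)$ and $(i,j)\sim(i\pm 2,j\pm 1)$, and the straight-three $P_4$'s of the form $(i,j)\sim(i\pm 3,j)$ linking the last rows back to rows $0$ and $1$ are all respected. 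For each such matrix the verification reduces to checking the displacement conditions above on a finite number of cells.

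The principal obstacle is this case-by-case construction: the diagonal-shift pattern does not close cyclically unless $7 \mid m$, and the necessary row repairs must avoid reintroducing any of the forbidden displacements. The exclusion of $m = 5$ enters precisely here, since in $C_5$ the displacement $(\pm 3, 0)$ collapses to $(\mp 2, 0)$, so that the straight-three $P_4$ wrapping around the $C_5$-factor introduces a forbidden displacement at $(\pm 2, 0)$ that is absent for $m \ge 6$; this extra constraint is what prevents a $4$-coloring of $C_5 \square C_7$ and is handled in a separate theorem.
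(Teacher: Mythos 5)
Your lower bound is sound and is essentially the paper's: the paper extracts the prism $D_7=P_2\square C_7$ as a subgraph and invokes $\chi_{ei}(D_7)=4$, while you use $P_m\square C_7$; either way, subgraph monotonicity of $\chi_{ei}$ (every $P_4$ of a subgraph is a $P_4$ of the supergraph, so $S_3$ of the subgraph sits inside $S_3$ of the supergraph) gives $\chi_{ei}(C_m\square C_7)\ge 4$. Your classification of the admissible endpoint displacements of a $P_4$ in $C_m\square C_7$ is also correct and is a serviceable framework for verifying a coloring.

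The gap is that the upper bound is never actually established. You reduce the theorem to exhibiting, for each residue of $m$ modulo $7$, an $m\times 7$ color matrix obtained from the diagonal-shift pattern by inserting ``boundary rows,'' but you neither produce these matrices nor argue that the required repairs exist; you yourself flag this as the principal obstacle. Since everything beyond the easy lower bound consists precisely of these explicit colorings, the proof is incomplete at its crux. For comparison, the paper does not split on $m\bmod 7$ at all: for even $m$ it uses a vertically $2$-periodic pattern with rows $(1,2,1,2,3,4,3)$ and $(2,1,2,1,4,3,4)$, which closes on any even cycle and is easily checked against your displacement list (note that rows at vertical distance $3$ have opposite parity, so the $(\pm 3,0)$ constraint reduces to the $(\pm 1,0)$ one); only for odd $m$ does it fall back on the diagonal shift, where, as you correctly observe, cyclic closure is problematic unless $7\mid m$ --- a point the paper itself glosses over. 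So your instinct about where the difficulty lies is accurate, but identifying the difficulty is not the same as overcoming it: without explicit, verified colorings covering all remaining $m$, the inequality $\chi_{ei}(C_m\square C_7)\le 4$ is not proved. (Your side remark on why $m=5$ is excluded --- the wrap-around $(3,0)\equiv(-2,0)$ in the $C_5$ factor --- is a correct observation, but it only explains an extra constraint and does not by itself show $\chi_{ei}(C_5\square C_7)>4$; that case is treated separately in any event.)
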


\begin{proof}
There are two cases to be considered.

\textbf{Case 1}. Let  m be odd $(m \geq7)$ and $n=7$. The Cartesian product ${C_{m}\square C_{7}}$ contains  ${P_{2}\square C_{7}}$ as a subgraph, so $\chi_{ei}({C_{m}\square C_{7}})\geq 4$.
We construct an $e$-injective coloring with $4$ colors of ${C_{m}\square C_{7}}$ in Table \ref{T16}. There is not a path of length $3$ between two vertices $v_{i,j}$ and $v_{i+1,j (\emph{mod}\ 7)}$, so $v_{i,j}$ and $v_{i+1,j (\emph{mod}\ 7)}$ can take the same color. Obviously, this matrix provides an $e$-injective coloring of the $C_{m}\square C_{7}$. Therefore $\chi_{ei}({C_{m}\square C_{7}})= 4$.
\begin{table}[h]
  \centering
  $\begin{pmatrix}
    1 & 2 & 1 & 2 & 3 & 4 & 3 \\
    3 & 1 & 2 & 1 & 2 & 3 & 4 \\
    4 & 3 & 1 & 2 & 1 & 2 & 3 \\
    3 & 4 & 3 & 1 & 2 & 1 & 2 \\
    2 & 3 & 4 & 3 & 1 & 2 & 1 \\
    1 & 2 & 3 & 4 & 3 & 1 & 2 \\
    2 & 1 & 2 & 3 & 4 & 3 & 1 \\
    \vdots&\vdots&\vdots&\vdots&\vdots&\vdots&\vdots
  \end{pmatrix}$
  \caption{Matrix $e$-injective coloring of ${C_{m}\square C_{n}}$ $(m\ is\ odd\ (m\geq7),\ n=7$)}\label{T16}
\end{table}
%\vspace{2mm}

\textbf{Case 2}. Let m be even and $n=7$. The Cartesian product ${C_{m}\square C_{n}}$ contains ${C_{7}\square P_{2}}$ as a subgraph, so $\chi_{ei}({C_{m}\square C_{n}})\geq4$.
We shall construct an $e$-injective coloring with $4$ colors of ${C_{m}\square C_{n}}$. It easy to see that, in Table \ref{T17}, there is not a path of length $3$ between every pair of vertices with the same color. Therefore $\chi_{ei}({C_{m}\square C_{7}})=4$.
%\vspace{4mm}

\begin{table}[h]
  \centering
  $\begin{pmatrix}
    1 & 2 & 1 & 2 & 3 & 4 & 3 \\
    2 & 1 & 2 & 1 & 4 & 3 & 4 \\
    1 & 2 & 1 & 2 & 3 & 4 & 3 \\
    2 & 1 & 2 & 1 & 4 & 3 & 4 \\
    \vdots & \vdots & \vdots & \vdots & \vdots & \vdots  & \vdots
  \end{pmatrix}$
  \caption{Matrix $e$-injective coloring of ${C_{m}\square C_{7}}$ $(m \ \textmd{is}\ \textmd{even}\ \textmd{and} \  n=7)$}\label{T17}
\end{table}

\end{proof}
\newpage

Finally, for the Cartesian product of another cycles we have.
\begin{theorem}\label{theo-C*C}
$$\chi_{ei}(C_{m}\square C_{n})=\begin{cases}
 2 & \textmd{if} \ 2|m , \  2|n   \\
 3 & \textmd{if} \  m\geq4 ,\  2\nmid n ,\ (m\neq\ 5,7\,\ n\geq9) \\
 \end{cases}$$
\end{theorem}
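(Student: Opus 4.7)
I will prove matching upper and lower bounds in each of the two cases of the statement.

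For the lower bounds: in Case~1, since $m,n\geq 2$ the graph $C_m\square C_n$ contains $C_4$ as a subgraph, and opposite vertices of $C_4$ are the ends of a $P_4$, so $\chi_{ei}(C_m\square C_n)\geq 2$. In Case~2, any two consecutive rows of $C_m$ yield a copy of $D_n=P_2\square C_n$ inside $C_m\square C_n$, so by Proposition~\ref{prop-dn} we have $\chi_{ei}(C_m\square C_n)\geq \chi_{ei}(D_n)=3$ for odd $n\geq 9$.

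For the upper bound in Case~1 I will use the parity coloring $f(v_{i,j})=(i+j)\bmod 2$. Because $m$ and $n$ are both even, this function is well-defined on $\mathbb{Z}_m\times\mathbb{Z}_n$. In any $P_4$, each of the three edges changes exactly one coordinate by $\pm 1$ and so flips the parity of $i+j$; after three flips the parity is reversed, so the two endpoints of any $P_4$ receive different colors. Thus $f$ is an $e$-injective $2$-coloring, which together with the lower bound proves Case~1.

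For the upper bound in Case~2 I plan to adapt the cylinder construction from Case~5 of Theorem~\ref{theo-P*C}. That construction colors the first row of $P_m\square C_n$ by an explicit pattern on $C_n$ and propagates by the rule $f(v_{i,j})=f(v_{i-1,j-1\bmod n})$ (cyclic shift by one). On the torus the additional compatibility $f(v_{m+1,j})=f(v_{1,j})$ must hold, which is automatic precisely when $n\mid m$. For other values of $m$ I split into subcases according to $\gcd(m,n)$: in each subcase I construct an explicit 3-coloring, for instance by using a shift-per-row whose cumulative effect after $m$ rows is $0\bmod n$, or by replacing the base pattern by one whose symmetry group contains the required rotation. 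Verification in each subcase reduces to checking that for every possible coordinate difference $(\Delta i,\Delta j)$ between the endpoints of a $P_4$ in $C_m\square C_n$ — namely the reductions modulo $(m,n)$ of $(\pm 1,0),(0,\pm 1),(\pm 3,0),(0,\pm 3),(\pm 2,\pm 1),(\pm 1,\pm 2)$ — the two vertices so related receive distinct colors. The hypotheses $m\neq 5,7$ and $n\geq 9$ enter respectively because those parameter values are already treated (with strictly larger chromatic number) in Theorems~\ref{theo-P*C} and~\ref{theo-C*C*} and because the lower bound requires $\chi_{ei}(D_n)=3$.

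The main obstacle is the upper bound in Case~2 when $\gcd(m,n)$ is small: the uniform shift-by-one scheme used on the cylinder no longer closes up on $C_m$, so a tailored construction is needed in each subcase, together with a careful verification that every $P_4$-endpoint constraint — including those wrapping around through the seam of $C_m$ — is satisfied.
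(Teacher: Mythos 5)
Your Case 1 argument is complete and correct, and in fact cleaner than the paper's: the parity coloring $f(v_{i,j})=(i+j)\bmod 2$ is well defined because $m$ and $n$ are even, every edge of $C_m\square C_n$ flips the parity of $i+j$, and three flips reverse it, so the two ends of any $P_4$ get distinct colors. (One small slip: within a $4$-cycle it is the \emph{adjacent} vertices, not the opposite ones, that are the ends of a $P_4$ — the path $v_1v_2v_3v_4$ has adjacent endpoints $v_1,v_4$ — but the conclusion $\chi_{ei}\ge 2$ still follows.) Your lower bound of $3$ in Case 2 via the embedded copy of $P_2\square C_n$ and Proposition \ref{prop-dn} is also fine, and is the same argument the paper uses.

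The genuine gap is the upper bound of $3$ in Case 2, which is the substantive content of the theorem and which you describe only as a plan. You correctly identify the obstruction (the cylinder's shift-by-one rule $f(v_{i,j})=f(v_{i-1,j-1})$ closes up on the torus only when the shift is compatible with $m$) and the correct verification criterion (the list of displacement classes $(\pm1,0),(0,\pm1),(\pm3,0),(0,\pm3),(\pm2,\pm1),(\pm1,\pm2)$ reduced mod $(m,n)$), but you never exhibit a single explicit $3$-coloring, and "a tailored construction is needed in each subcase" is precisely the part that must be done. The paper resolves this with two concrete periodic patterns: for $m$ even and $n$ odd, $n\ge 9$, a $2$-row-periodic matrix (rows alternate between a fixed pattern on $C_n$ and its unit shift, exploiting that $v_{i,j}$ and $v_{i+2,j}$ are not ends of a $P_4$), and for $m,n$ both odd with $m,n\ge 9$ a more intricate explicit matrix. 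Your proposal to organize the construction by $\gcd(m,n)$ is plausible but unverified; whether a shift-per-row scheme with total shift $\equiv 0 \pmod n$ actually satisfies all the displacement constraints (in particular the $(\pm2,\pm1)$ and $(\pm1,\pm2)$ ones, and the wrap-around cases for small $m$ such as $m=4$) is exactly what needs to be checked and is not automatic. As it stands the proof establishes $\chi_{ei}(C_m\square C_n)\ge 3$ in Case 2 but not $\le 3$.
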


\begin{proof}
There are two cases to be considered.

\textbf{Case 1}. Let m and n are even. The Cartesian product ${C_{m}\square C_{n}}$ contains ${C_{m}\square P_{2}}$ as a subgraph, (see Proposition \ref{prop-dn}). Therefore $\chi_{ei}({C_{m}\square C_{n}})\geq 2$.\\
We shall construct an $e$-injective coloring with $2$ colors for ${C_{m}\square C_{n}}$ in Table \ref{T18}. It is clear that, there is no path of length $3$  between every pair of vertices with the same color. Therefore  $\chi_{ei}({C_{m}\square C_{n}})=2$.\\
\begin{table}[h]
  \centering
  $\begin{pmatrix}
    1 & 2 & 1 & 2 & 1 & 2 &\ldots \\
    2 & 1 & 2 & 1 & 2 & 1 &\ldots\\
    1 & 2 & 1 & 2 & 1 & 2 &\ldots\\
    2 & 1 & 2 & 1 & 2 & 1 &\ldots\\
    1 & 2 & 1 & 2 & 1 & 2 &\ldots\\
    2 & 1 & 2 & 1 & 2 & 1 &\ldots \\
       \vdots & \vdots & \vdots & \vdots & \vdots & \vdots
  \end{pmatrix}$
  \caption{Matrix $e$-injective coloring of ${C_{m}\square C_{n}}$($m\ \textmd{and}\ n \ \textmd{are}\ \textmd{even}$)}\label{T18}
\end{table}

\textbf{Case 2}. For case 2, we bring up two positions.

2.1. Let $m$ and $n$ be odd, ($m,n\geq 9$). The Cartesian product ${C_{m}\square C_{n}}$ contains  ${C_{m}\square P_{2}}$ as a subgraph, so $\chi_{ei}({C_{m}\square C_{n}})\geq3$.\\
We construct an $e$-injective coloring with $3$ colors of ${C_{m}\square C_{n}}$ using of pattern given in the matrix form in Table \ref{T19}. It is not hard to see that  there is no path of length 3 between every pair of vertices with the same colors. Therefore $\chi_{ei}({C_{m}\square C_{n}})= 3$.\\
\begin{table}[h]
  \centering
  $\begin{pmatrix}
    1 & 2 & 3 & 2 & 3 & 1 & 3 & 1 & 2 & 1 & 2 &\ldots & 1 & 2 \\
    2 & 1 & 2 & 3 & 2 & 3 & 1 & 3 & 1 & 3 & 1 &\ldots & 3 & 1 \\
    1 & 2 & 1 & 2 & 3 & 2 & 3 & 1 & 3 & 1 & 3 &\ldots & 1 & 3 \\
    3 & 1 & 2 & 1 & 2 & 3 & 2 & 3 & 1 & 3 & 1 &\ldots & 3 & 1 \\
    1 & 3 & 1 & 2 & 1 & 2 & 3 & 2 & 3 & 2 & 3 &\ldots & 2 & 3 \\
    3 & 1 & 3 & 1 & 2 & 1 & 2 & 3 & 2 & 3 & 2 &\ldots & 3 & 2 \\
    2 & 3 & 1 & 3 & 1 & 2 & 1 & 2 & 3 & 2 & 3 &\ldots & 2 & 3 \\
    3 & 2 & 3 & 1 & 3 & 1 & 2 & 1 & 2 & 1 & 2 &\ldots & 1 & 2 \\
    2 & 3 & 2 & 3 & 1 & 3 & 1 & 2 & 1 & 2 & 1 &\ldots & 2 & 1 \\
    1 & 2 & 3 & 2 & 3 & 1 & 3 & 1 & 2 & 1 & 2 &\ldots & 1 & 2 \\
    2 & 1 & 2 & 3 & 2 & 3 & 1 & 3 & 1 & 2 & 1 &\ldots & 2 & 1 \\
    1 & 2 & 3 & 2 & 3 & 1 & 3 & 1 & 2 & 1 & 2 &\ldots & 1 & 2 \\
    2 & 1 & 2 & 3 & 2 & 3 & 1 & 3 & 1 & 2 & 1 &\ldots & 2 & 1 \\
     \vdots & \vdots & \vdots & \vdots & \vdots & \vdots  & \vdots & \vdots & \vdots  & \vdots & \vdots & \vdots & \vdots & \vdots \\
    1 & 2 & 3 & 2 & 3 & 1 & 3 & 1 & 2 & 1 & 2 &\ldots & 1 & 2 \\
    2 & 1 & 2 & 3 & 2 & 3 & 1 & 3 & 1 & 2 & 1 &\ldots & 2 & 1 \\
  \end{pmatrix}$
  \caption{Matrix $e$-injective coloring of ${C_{m}\square C_{n}}$($m\ \textmd{and}\  n \ \textmd{are}\ \textmd{odd},\ (m,n\geq9)$)}\label{T19}
\end{table}\\

2.2. Let $m$ be even and $n$ be odd, $(n\geq9)$. The Cartesian product ${C_{m}\square C_{n}}$ contains ${C_{n}\square P_{2}}$ as a subgraph, so $\chi_{ei}({C_{m}\square C_{n}})\geq 3$.\\
We construct an $e$-injective coloring with $3$ colors of ${C_{m}\square C_{n}}$ in Table \ref{T20} and use  Proposition \ref{prop-dn} for coloring.
It is not hard to see that  there is no path of length $3$
between $v_{i,j}$ and $v_{i+2,j}$ in $C_{m}\square C_{n}$, so we assign the same color to them. Therefore $\chi_{ei}({C_{m}\square C_{n}})=3$.
\begin{table}[h]
  \centering
  $\begin{pmatrix}
    1 & 2 & 1 & 3 & 1 & 3 & 2 & 3 & 2 & 1 & 2 &\ldots & 1 & 2 & 1 & 2 \\
    2 & 1 & 2 & 1 & 3 & 1 & 3 & 2 & 3 & 2 & 1 & 2 &\ldots & 1 & 2 & 1\\
    1 & 2 & 1 & 3 & 1 & 3 & 2 & 3 & 2 & 1 & 2 &\ldots & 1 & 2 & 1 & 2 \\
    2 & 1 & 2 & 1 & 3 & 1 & 3 & 2 & 3 & 2 & 1 & 2 &\ldots & 1 & 2 & 1\\
    \vdots & \vdots & \vdots & \vdots & \vdots & \vdots  & \vdots  & \vdots & \vdots & \vdots & \vdots & \vdots & \vdots & \vdots  & \vdots  & \vdots
  \end{pmatrix}$
  \caption{Matrix $e$-injective coloring of ${C_{m}\square C_{n}}$ $(m \ \textmd{is}\ \textmd{even}\ \textmd{and} \  n \ \textmd{is}\ \textmd{odd},(n\geq9))$}\label{T20}
\end{table}

\end{proof}

\newpage
\textbf{Discussion and conclusions}
\vspace{2mm}

From Observations \ref{obs-e-uso}, \ref{obs-e-inj} and \ref{obs-e-dis},

1. Characterize graphs $G$ with $\chi_{ei}(G)=\chi(G)$;
 Characterize graphs $G$ with $\chi_{ei}(G)=\chi_i(G)$;
Characterize graphs $G$ with $\chi_{ei}(G)=\chi_2(G)$.

2. After  discussion  on the solution of 1, one can revisit the  Conjectures 1 and 2. 

From  Propositions \ref{Max deg} and \ref{prop pack}, we can have the following.

3.  Characterize graphs $G$ with $\chi_{ei}(G)=\rho(G)$.

4. Characterize graphs $G$ with $\chi_{ei}(G)=\Delta(G)(\Delta(G)-1)^2+1$.

%%%%%%%%%%%%%%%%%%%%%%%%%%%%%%%%%%%%%%%%%%%%%%%%%%%%%%%%%%%%%%%%%%%%%%%%%%%%%%%%
%%%%%%%%%%%%%%%%%%%%%%%%%%%%%%%%%%%%%%%%%%%%%%%%%%%%%%%%%%%%%%%%%%%%%%%%%%%%%%%%

\end{document}